\newif\ifdraft
\newtheorem{theorem}{Theorem}
\theoremstyle{plain}
\newtheorem{corollary}[theorem]{Corollary}
\newtheorem{definition}[theorem]{Definition}
\newtheorem{lemma}[theorem]{Lemma}
\newtheorem{remark}[theorem]{Remark}
\numberwithin{equation}{section}
\newcommand{\E}{\mathbb{E}}
\def\D{\mathscr{D}} 
\def\Cr{\mathscr{C}} 
\def\R{\mathbb{R}}   
\def\drp{e} 
\def\dode{d} 
\def\dbm{d_B} 
\newcommand{\Lip}{C_b}
\begin{document}
\def\cprime{$'$}
\def\cprime{$'$}

\title[SPDEs and rough paths]{Stochastic partial differential equations: a rough path view}

\author{Joscha Diehl, Peter K. Friz and Wilhelm Stannat}
\address{JD and WS are affiliated to TU Berlin. PKF is corresponding author (friz@math.tu-berlin.de) and affiliated to TU and WIAS Berlin.}

\begin{abstract}
We discuss regular and weak solutions to rough partial differential equations (RPDEs), thereby providing a (rough path-)wise view 
on important classes of SPDEs. In contrast to many previous works on RPDEs, our definition gives honest meaning to RPDEs as integral equation, 
based on which we are able to obtain existence, uniqueness and stability results. The case of weak ``rough" forward equations,
may be seen as robustification of the (measure-valued) Zakai equation in the rough path sense. Feynman-Kac representation for RPDEs, in formal analogy to similar classical results in SPDE theory, play an important role. 
\end{abstract}

\keywords{stochastic partial differential equations, Zakai equation, Feynman-Kac formula, rough partial differential equations, rough paths}
\subjclass{60H15}
\maketitle


\section{Introduction}

Consider a diffusion process $X$ on $\mathbb{R}^{d}$ with
generator given by a second order differential operator $L$. In its simplest
form, the Feynman-Kac formula asserts that, for suitable data $g$,%
\begin{equation}
u\left( t,x\right) = \mathbb{E}^{t,x}\left[ g\left( X_{T}\right) \right]
,\,\,\,\,\,t\leq T,x\in \mathbb{R}^{d},  \label{FKintro}
\end{equation}%
solves a parabolic partial differential equation, namely the
terminal value problem 
\begin{equation*}
\begin{cases}
-\partial_{t}u_{t} & =Lu_{t} \\ 
u \left( T,\cdot \right) & =g.%
\end{cases}
\end{equation*}%
(Below we will consider slightly more general operators including zero order term, causing additional exponential factors in the Feynman-Kac formula.) On the other hand, the law of $X_{t}$ started at $X_{0}=x$, solves the forward (or Fokker-Planck) equation 
\begin{equation*}
\begin{cases}
\partial_{t}\rho _{t} & =L^{\ast }\rho _{t} \\ 
\rho \left( 0,\cdot \right) & =\delta _{x}.%
\end{cases}%
\end{equation*}%
Formally at least, an infinitesimal version of (\ref{FKintro}) is given by
\begin{equation*}
\partial_{t}\, \left\langle u_{t},\rho _{t}\right\rangle =\left\langle
-Lu_{t},\rho_{t}\right\rangle +\left\langle \,u_{t},L^{\ast }\rho
_{t}\right\rangle =0,
\end{equation*}%
and indeed the resulting duality  $\left\langle u_{T},\rho _{T}\right\rangle
=\left\langle u_{0,}\rho _{0}\right\rangle$ is nothing than restatement of (\ref{FKintro}), at $t=0$.

\bigskip

In both cases, forward and backward, there may not exist a classical $%
C^{1,2}$ solution. Indeed, it suffices to consider the case of degenerate $%
X$ so that $\rho_t$ remains a measure; in the backward case consider $%
g\notin C^{2}$. In both cases one then needs a concept of weak solutions. 
A natural way to do this, consists in testing the equation in space; that is, to consider the evolution for $\left\langle u_{t},\phi \right\rangle $ and $\left\langle\rho _{t},f\right\rangle $ where $\phi $ and~$f$ are suitable test functions defined on $\mathbb{R}^{d}$.

\bigskip

Applications from filtering theory lead to (backward) SPDEs of the form 
\begin{equation*}
\begin{cases}
-du_{t} & =L[u_{t}]dt+\Gamma \lbrack u_{t}]\circ dW_{t} \\ 
u\left( T,\cdot \right) & =g,%
\end{cases}%
\end{equation*}%
where $W=(W^{1},\dots ,W^{e})$ and $\Gamma =\left( \Gamma _{1},\dots ,\Gamma
_{e}\right) $ are first order differential operators,\footnote{Write $\Gamma
\lbrack u]\circ dW=\sum_{k=1}^{e}\Gamma _{k}[u]\circ dW^{k}$.} in duality
with the forward (or Zakai) equation%
\begin{equation*}
\begin{cases}
d\rho _{t} & =L^{\ast }[\rho _{t}]dt+\Gamma ^{\ast }[\rho _{t}]\circ dW_{t}
\\ 
\rho \left( 0,\cdot \right) & =\delta _{x}.%
\end{cases}%
\end{equation*}%
Such SPDEs were studied extensively in classical works
\cite{Kunita1982,bib:rozovksii,bib:pardoux}.
It is a natural question, studied for
instance in a series of papers by Gy\"{o}ngy \cite{Gy88,Gy89}, to what extent such SPDEs are
approximated by (random) PDEs, upon replacing the (Stratonovich)
differential $dW=dW\left( \omega \right) $ by $\dot{W}^{\varepsilon }\left(
\omega \right) dt$, given a suitable family of smooth approximation $\left(
W^{\varepsilon }\right) $ to Brownian motion. In recent works \cite{FO14, DOR}, also \cite[Ch.12]{FH14}, it was shown
that the backward solutions $u^{\varepsilon }$, interpreted as viscosity
solution (assuming $g\in C_{b}$) actually converge locally uniformly, with
limit $u$ only depending on the rough path limit of $\left( W^{\varepsilon
}\right) $. Writing $\mathbf{W=(}W\mathbf{,\mathbb{W})}$ for such a (deterministic!) rough
path (see e.g. \cite{FH14} for notation) say, $\alpha $-H\"{o}lder, for $%
1/3\,<\alpha <1/2$) the question arises if one can give an honest meaning to
the equations %
\begin{eqnarray}
-du_{t} &=&L[u_{t}]dt+\Gamma \lbrack u_{t}]d\mathbf{W}_{t}\mathbf{,}
\label{u_intro} \\
d\rho _{t} &=&L^{\ast }[\rho _{t}]dt+\Gamma ^{\ast }[\rho _{t}]d\mathbf{W}%
_{t}\mathbf{.}  \notag
\end{eqnarray}%
In the aforementioned works, these ``rough partial differential equations" (RPDEs) had only formal meaning. The actual definition was then 
given either in terms of a (flow)transformed equation in the spirit of Kunita (e.g.  \cite{FO14}, also \cite[p.177]{FH14}) or
in terms of a unique continuous extension of the PDE solution as function of driving noise, \cite{CFO, FO14}. 
\bigskip

There are two difficulties with such rough partial differential equations. The first one is the temporal roughness of $%
\mathbf{W}$, a problem that has been well-understood from the rough path
analysis of SDEs. Indeed, following Davie's approach to RDEs \cite{bib:davie}, the
(rough) pathwise meaning of%
\begin{equation*}
dX=\beta \left( X\right) d\mathbf{W}
\end{equation*}%
is, by definition, and writing $X_{s,t}=X_{t}-X_{s}$ for path increments, 
\begin{equation*}
X_{s,t}=\beta \left( X_{s}\right) W_{s,t}+\beta ^{\prime }\beta \left(
X_{s}\right) \mathbb{W}_{s,t}+o\left( \left\vert t-s\right\vert \right) .
\end{equation*}%
Under suitable assumptions on $\beta $, uniqueness, local/global existence
results are well-known. This quantifies that
statement that $X$ is controlled by $W$, with ``Gubinelli derivative" $\beta
\left( X\right) $, and in turn implies the integral representation in terms of a bona-fide rough integral (cf. \cite[Ch.4]{FH14})%
\begin{equation*}
X_{t}-X_{s}=\int_{s}^{t}\beta \left( X\right) d\mathbf{W}=\lim \sum_{\left[
u,v\right] \in P}\beta \left( X_{u}\right) W_{u,v}+\beta ^{\prime }\beta
\left( X_{u}\right) \mathbb{W}_{u,v}\text{.}
\end{equation*}%
This suggests that the meaning of the backward equation (\ref{u_intro}) is 
\begin{equation*}
u\left( s,x\right) -u\left( t,x\right)
=\int_{s}^{t}L[u_{r}]dr+\int_{s}^{t}\Gamma \lbrack u_{r}]d\mathbf{W}_{r},
\end{equation*}%
provided $u$ is sufficiently regular (in space) such as to make $L[u],\Gamma
\lbrack u]$ meaningful, {\it and} provided the last term makes sense as rough integral. The other difficulty is
exactly that $u$ may not be regular in space so that $L[u],\Gamma \lbrack u]$ require a weak meaning. More precisely, we propose the following spatially
weak\footnote{%
There is no probability here, for $\mathbf{W}$ is a \textit{deterministic}
rough path. Nevertheless, with a view to later applications to SPDEs and to
avoid misunderstandings, let us emphasize that in this paper ``weak" is always understood as
``analytically weak".} formulation, of the form 
\begin{equation*}
\left\langle u_{s},\phi \right\rangle -\left\langle u_{t},\phi \right\rangle
=\int_{s}^{t}\left\langle u_{r},L^{\ast }\phi \right\rangle
dr+\int_{s}^{t}\left\langle u_{r},\Gamma ^{\ast }\phi \right\rangle d\mathbf{%
W}_{r},
\end{equation*}%
where, again, we can hope to understand the last term as rough integral.
(Everything said for backward equations translates, mutatis mutandis, to the
forward setting.)

\bigskip 
The main result of this paper is that - in all cases - one has existence and
uniqueness results. Loosely speaking (and subject to suitable regularity
assumptions on the coefficients of $L,\Gamma $; but no ellipticity assumptions) we have

{\color{red} }

\begin{theorem}
\label{thmmainloose} For nice terminal data $g$ there exists a unique (spatially) regular solution
to the backward RPDE. Similarly, for nice initial data $\rho_0$ (with nice density $p_0$, say) the forward RPDE has a unique (spatially) regular solution. \\
If the terminal data $g$ is only bounded and continuous, we have existence and uniqueness of a weak solution to the backward RPDE.
Similarly, if the initial data  $\rho_0$ of the forward RPDE is only a finite measure, we have existence and uniqueness of a weak (here: measure-valued) 
solution to the forward RPDE. \\
In all cases, the (unique) solution depends continuously on the driving rough path and we have Feynman--Kac type representation formulae.
\end{theorem}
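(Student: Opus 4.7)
The plan is to construct the regular solution by a mixed rough/stochastic Feynman--Kac formula and to derive uniqueness, the weak theory and continuity in the driver from a single duality argument combined with an approximation step. Writing the generator as $L f = \tfrac12 \mathrm{tr}(\sigma\sigma^T D^2 f) + b\cdot \nabla f + c f$ and the noise operators as $\Gamma_k f = V_k \cdot \nabla f + \gamma_k f$, I would first introduce, on some auxiliary probability space carrying a Brownian motion $B$, the mixed It\^o--rough SDE/RDE
\[
dX_r = b(X_r)\, dr + \sigma(X_r)\, dB_r + V(X_r)\, d\mathbf{W}_r, \qquad X_s = x,
\]
and use Davie-type estimates combined with standard It\^o theory to obtain a smooth stochastic flow $\Phi_{s,\cdot}(x)$ depending continuously on $\mathbf{W}$ in rough path topology. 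The candidate for the regular backward solution is then
\[
u(s,x) = \mathbb{E}\!\left[ g(\Phi_{s,T}(x))\, \exp\!\left( \int_s^T c(X_r)\, dr + \int_s^T \gamma(X_r)\, d\mathbf{W}_r \right)\right],
\]
with the last integral interpreted in the rough sense. A Kunita-type flow transformation in the spirit of \cite{FO14} and \cite[Ch.~12]{FH14} should show that $u$ is spatially $C^2$ for sufficiently smooth $g$, that $s \mapsto \langle u_s, \phi\rangle$ is a $W$-controlled path with Gubinelli derivative $\langle u_s, \Gamma^* \phi\rangle$, and that $u$ satisfies the backward rough integral equation.

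Uniqueness and the Feynman--Kac representation I would extract simultaneously from a rough integration-by-parts rule. Given a regular $u$ solving the backward RPDE and a regular $\rho$ solving the forward RPDE, the pairing $t \mapsto \langle u_t, \rho_t\rangle$ should satisfy
\[
d\langle u_t, \rho_t\rangle = \bigl( -\langle L u_t, \rho_t\rangle + \langle u_t, L^* \rho_t\rangle \bigr) dt + \bigl( -\langle \Gamma[u_t], \rho_t\rangle + \langle u_t, \Gamma^*[\rho_t]\rangle \bigr) d\mathbf{W}_t = 0,
\]
where the rough differential is unambiguous because both factors are $W$-controlled with matching Gubinelli derivatives, so that the second-order bracket cancels by the same duality as the first-order term. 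Testing against $\rho_0 = \delta_x$ then forces $u_0(x) = \langle u_T, \rho_T\rangle$, which is both the Feynman--Kac formula and the uniqueness statement. For the weak cases I would approximate $g \in \Lip$ (resp.\ $\rho_0$ a finite measure) by smooth data, run the regular theory to get $u_n$ (resp.\ $\rho_n$), and pass to the limit in the weak rough integral equation; the required uniform bounds come from the Feynman--Kac representation (giving $\|u_n\|_\infty \lesssim \|g\|_\infty$) and from continuity of the rough integral and of the flow $\Phi$ as functions of $\mathbf{W}$. This last continuity also delivers the announced continuous dependence of the solution on the driving rough path, in both regular and weak settings, and weak uniqueness follows from the same duality argument, pairing a weak solution in one direction with a regular solution in the dual direction.

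The step I expect to be the main obstacle is establishing the correct controlled-path structure in the weak setting: showing that $r \mapsto \langle u_r, \Gamma^* \phi\rangle$ is a $W$-controlled path with Gubinelli derivative $\langle u_r, (\Gamma^*)^2 \phi\rangle$, and that these controlled-path norms are bounded uniformly along the approximation $u_n \to u$. Because $u_r$ is only bounded-measurable (backward weak case) or a finite measure (forward weak case), all spatial smoothing must be moved onto the test function: one has to iterate $\Gamma^*$ on $\phi$, exploit smoothness of the coefficients of $\Gamma$, and combine this with the sup-norm Feynman--Kac bound to close the uniform estimates before invoking continuity of the rough integral map. This controlled-path bookkeeping, rather than existence or duality, is where the real work of the theorem lies.
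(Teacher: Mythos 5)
Your overall strategy matches the paper's: a hybrid It\^o/rough Feynman--Kac formula produces existence (and, with continuity of the rough flow, stability in $\mathbf{W}$), and uniqueness is closed by a rough-product-rule duality showing $t\mapsto\langle u_t,\rho_t\rangle$ is constant, where a weak solution is paired against a regular solution of the dual equation. You also correctly identify that the real technical work is the controlled-path bookkeeping $r\mapsto\langle u_r,\Gamma^*\phi\rangle$ and its uniform bounds along approximations; that is indeed what the bulk of the paper is devoted to (Lemmas on flows with determinant, integrability of the joint lift, etc.).

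There is, however, a genuine gap in your treatment of the weak backward uniqueness that would make the duality argument fail to close as written. You need, for each test function $\varphi$, a \emph{regular} forward solution $\phi_r$ of $\partial_r\phi = L^*\phi + \Gamma^*\phi\,d\mathbf{W}$, $\phi_t=\varphi$, to feed into the pairing $\langle u_r,\phi_r\rangle$. But the Feynman--Kac representation of this dual object is not compactly supported even when $\varphi$ is; it only decays exponentially, and the weak backward solution $u$ is merely bounded. Consequently, the very definition of weak backward solution must be made against a class of exponentially decaying test functions (the paper's $C^3_{\exp}$), and you must separately prove that the controlled-path norms $\|\langle u_\cdot,\Gamma^*\varphi\rangle, \langle u_\cdot,\Gamma^*\Gamma^*\varphi\rangle\|_{W,\alpha}$ hold \emph{uniformly} over bounded sets of such test functions, and that the Feynman--Kac forward solution and the controlled-path norms of its derivatives inherit the exponential decay. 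Your proposal works with generic ``test functions'' and relies only on the sup-norm bound for $u$; this neither pins down the correct test-function class for the weak definition nor supplies the integrability needed to make the pairing $\langle u_r,\phi_r\rangle$ and the rough product rule well-defined. The forward measure-valued case is easier, as you observe, since there $\rho$ is a finite measure and bounded dual data suffice; but the asymmetry between the two weak cases is precisely what forces the exponential-decay machinery, and it cannot be omitted.

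A related but more minor point: you present a single duality argument ``in both directions,'' but the paper carefully arranges a four-step cycle (regular forward existence $\Rightarrow$ weak backward uniqueness; regular backward existence $\Rightarrow$ measure-valued forward uniqueness; measure-valued forward existence $\Rightarrow$ regular backward uniqueness; and regular forward uniqueness via reinterpretation as a backward equation) precisely so that no uniqueness statement is used before it has been established. Your phrasing ``testing against $\rho_0=\delta_x$'' only addresses regular backward uniqueness from measure-valued forward existence; to complete the theorem you must also state and justify the other legs, with the correct regularity and decay on each side of the pairing.
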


Let us briefly discuss the strategy of proof. In all cases (regular/weak,
forward/backward) existence of a solution is verified via an explicit
Feynman--Kac type formula, based on a notion of ``hybrid" It\^o/rough
differential equation, which already appeared in previous works
\cite{CDFO13,DOR}, see also \cite{FH14}. We then use regular forward existence to show weak backward uniqueness (Theorem \ref{thm:roughBackwardEquation}),
which actually requires us to work with exponentially decaying test functions. %
Next, regular backward existence leads to weak (actually, measure-valued) forward
uniqueness (Theorem \ref{thm:roughForwardEquation}), here we just need boundedness and some control in the
sense of Gubinelli. %
Then weak (measure-valued) forward existence gives regular backward
uniqueness. At last, we note that, subject to suitable smoothness
assumptions on the coefficients, regular forward equations can be viewed as
regular backward equations, from which we deduce regular forward uniqueness.

\bigskip

It is a natural question what the above RPDE\ solutions have to do with
classical SPDE solutions.  To this end, recall \cite[Ch.9]{FH14}
{consistency of RDEs with SDEs} in the following sense: RDE solutions
driven by $\mathbf{W}=\mathbf{W}^{\mathrm{Strato}}\left( \omega \right) $,
the usual (random) geometric rough path associated to Brownian motion $W$
via iterated Stratonovich integration are solutions to the corresponding
(Stratonovich)\ SDEs. Consider now - for the sake of argument - a regular
backward RPDE solution; that is, the unique solution $u=u\left( t,x;\mathbf{W%
}\right) $ to%
\begin{equation*}
-du_{t}=L[u_{t}]dt+\Gamma \lbrack u_{t}]d\mathbf{W}_{t}
\end{equation*}%
(with fixed $C_{b}^{2}$ time-$T$ terminal data). We expect that%
\begin{equation}
\tilde{u}\left( t,x\right) =\tilde{u}\left( t,x;\omega \right) =u\left( t,x;%
\mathbf{W}^{\mathrm{Strato}}\left( \omega \right) \right)
\label{utildeintro}
\end{equation}%
is also a (and hopefully: the unique) solution to the (backward)\ SPDE, again with fixed terminal data,%
\begin{equation*}
-d\tilde{u}_{t}=L[\tilde{u}_{t}]dt+\Gamma \lbrack \tilde{u}_{t}]\circ dW_{t}.
\end{equation*}%
(Similar for weak backward and weak/regular forward equations.)
Unfortunately, we cannot hope for a general RPDE/SPDE consistency statement
for the simple reason that the choice of spaces in which SPDE\ existence and
uniqueness statements are proven are model-dependent and therefore vary from paper to paper. In other words, checking that $\tilde{u}%
\left( t,x;\omega \right) $ is a - and then the (unique) - SPDE solution within a given
SPDE setting will necessarily require to check details specific to this setting. 
Luckily, there are
arguments which do not force us into such a particular setting. 

\begin{itemize}
\item Consider a notion of (Stratonovich) SPDE\ solution for which there are
existence, uniqueness results and Wong--Zakai stability, by which we mean
that the (unique bounded, or finite-measure valued) solutions to the
random PDEs obtained by replacing $dW\left( \omega \right) $ by the mollified $%
\dot{W}^{\varepsilon }\left( \omega \right) dt$ converge to the unique SPDE
solution. (Such Wong--Zakai results are found e.g. in the works of Gy\"{o}%
ngy.) Assume also that our regularity assumptions fall
within the scope of these existence and uniqueness results. Then, for fixed
terminal (resp. initial) data, our unique RPDE\ solution, with driving rough
path $\mathbf{W}=\mathbf{W}^{\mathrm{Strato}}\left( \omega \right) $,
coincides with (and in fact, maybe a very pleasant version of) the unique
SPDE solution. (This follows immediately from continuous dependence of our
RPDE solutions on the driving rough paths, together with well-known rough
path convergence of mollifier approximations \cite{friz-victoir-book}.) In a
context of viscosity solutions, this argument was spelled out in \cite{FO14}.

\item Consider a notion of (Stratonovich) SPDE\ solution for which there are
existence, uniqueness results and a Feynman--Kac representation formula.
(This is the case in essentially every classical work on linear SPDEs, especially
in the filtering context.)
Recall that such SPDE\ Feynman--Kac formulas are conditional expectations,
given $W\left( \omega \right)$ (the observation, in the filtering context). In contrast, the Feynman--Kac
formula eluded to in Theorem \ref{thmmainloose}, is of unconditional form $%
\mathbb{E}^{t,x}\left( ...\right) $, the expectation taken over some hybrid It%
\^{o}-rough process (with rough driver $d\mathbf{W}$). By a stochastic
Fubini argument (similar to the one in \cite{DOR}) one can show that the Feynman--Kac
formula, evaluated at $\mathbf{W}=$ $\mathbf{W}^{\mathrm{Strato}}\left(
\omega \right) $, indeed yields the SPDE\ Feynman--Kac formula. In
particular, our unique RPDE\ solution, with driving rough path $\mathbf{W}=%
\mathbf{W}^{\mathrm{Strato}}\left( \omega \right) $, then coincides with the
unique SPDE solution.

\item At last, we consider an immediate consequence of our (rough path-)
wise definition in case of $\mathbf{W}=$ $\mathbf{W}^{\mathrm{Strato}}\left(
\omega \right) $. For the sake of argument, let us now focus on the weak
backward equation,%
\begin{equation*}
\left\langle u_{s},\phi \right\rangle -\left\langle u_{t},\phi \right\rangle
=\int_{s}^{t}\left\langle u_{r},L^{\ast }\phi \right\rangle
dr+\int_{s}^{t}\left\langle u_{r},\Gamma ^{\ast }\phi \right\rangle d\mathbf{%
W}_{r}.
\end{equation*}%
With $\tilde{u}\left( t,x;\omega \right) =u\left( t,x;\mathbf{W}^{\mathrm{%
Strato}}\left( \omega \right) \right) $, as before it follows from
consistency of rough with classical (backward) Stratonovich integration \cite%
[Ch.5]{FH14} that%
\begin{equation*}
\left\langle \tilde{u}_{s},\phi \right\rangle -\left\langle \tilde{u}%
_{t},\phi \right\rangle =\int_{s}^{t}\left\langle \tilde{u}_{r},L^{\ast
}\phi \right\rangle dr+\int_{s}^{t}\left\langle \tilde{u}_{r},\Gamma ^{\ast
}\phi \right\rangle \circ dW,
\end{equation*}%
for the same class of spatial test functions. Such notion of weak\ (or
distributional)\ SPDE solutions appear for instance in the works of Krylov,
e.g. \cite[Def. 4.6]{bib:krylovAnalytic}. Hence, whenever such a
notion of SPDE solution comes with uniqueness results, it is straight-forward
to see that $\tilde{u}$, i.e. our solution constructed via rough paths, must
coincide with the unique SPDE solution.
\end{itemize}

\subsection{Notation}

The second resp. first oder operators we shall consider are of the following form,
\begin{eqnarray*}
  Lu &:=& \frac{1}{2} \mathrm{Tr}\left( \sigma \left( x\right) \sigma ^{T}\left( x\right)
D^{2}u\right) +\left\langle b\left( x\right) ,Du\right\rangle +c\left(
x\right) u \\
\Gamma_{k}u &:=& \left\langle \beta _{k}\left( x\right) ,Du\right\rangle +\gamma
_{k}\left( x\right) u;
\end{eqnarray*}%
with $\sigma =\left( \sigma _{1},\dots ,\sigma _{\dbm}\right) ,\,\beta =\left(
\beta _{1},\dots ,\beta _{\drp}\right) $ and $b$ vector fields on $\mathbb{R}%
^{\dode} $ and scalar functions $c,\gamma _{1},\dots ,\gamma _{\drp}$.
We note that the formal adjoints are given as,
\begin{align*}
  L^* \varphi &= 
  \frac{1}{2} \mathrm{Tr}[ \tilde a(x) D^2 \varphi ]
  +
  \langle \tilde b(x), D\varphi \rangle
  +
  \tilde c(x) \varphi, \\
  \Gamma^*_k \varphi
  &= 
  \langle \tilde \beta_k (x), D\varphi \rangle
  +
  \tilde \gamma_k(x) \varphi
\end{align*}
where
\begin{align}
  \label{eq:adjointCoefficients}
  \begin{split}
    \tilde a(x) & := a(x) := \sigma\sigma^T (x) \\
    \tilde b_i(x) & := \partial_j a_{ji}(x) - b_i (x)\\
    \tilde c(x) & := \frac{1}{2} \partial_{ij} a_{ij}(x) - \operatorname{div}( b )(x) + c(x) \\
  \end{split}
  \begin{split}
    \tilde \beta_k (x) & := -\beta_k(x) \\
    \tilde \gamma_k (x)
    & :=
    -
    \operatorname{div}( \beta_k )(x)
    +
    \gamma_k(x)
  \end{split}
\end{align}

Precise assumptions on the coefficients will appear in the theorems below. Let us remark, however, that we did not push for optimal assumptions.
As is typical in rough path theory, $C_b^n$-regularity (bounded, with bounded derivatives up to order $n$) can often be improved to $\Lip^\gamma$-regularity with $\gamma \in (n-1,n)$, depending on the H\"older exponent of the driving rough path.

%
%
%

\section{The backward equation}

Replacing the rough path by a smooth path, say $W\in C^{1}\left( \left[ 0,T\right], \mathbb{R}^{\drp}\right)$
we certainly want to recover a solution to the PDE 
\begin{align}
  \label{eq:preroughTVP}
  \begin{cases}
  -\partial _{t}u_t &= Lu_t + \Gamma_{k}u_{t} \dot{W}_{t}^{k} \\ 
  u\left( T,\cdot \right) &= g.
  \end{cases} 
\end{align}

For the precise statement of the following lemma, let us now introduce 
a suitable class of test functions with exponential decay, that will 
become important in the concept of weak solutions. 

\begin{definition}
  \label{def:expDecay}
  For $n \ge 0$ denote with $C^n_{\exp}(\R^\dode)$
  the class of functions $\phi \in C^n(\R^\dode)$ such that
  there exists $c > 0$ such that
  \begin{align*}
    |D^{k} \phi(x)|  \le c e^{-\frac{1}{c} |x|}, k = 0,1,\dots,n.
  \end{align*}
  Define the quasinorm%
  \footnote{ .. which we shall need in order to speak of bounded sets in $C^n_{\exp}(\R^\dode)$ .. }
  $||\cdot||_{C^n_{\exp}(\R^\dode)}$
  as the infimum over the values of $c$ satisfying the bound. Define moreover the space $C^{m,n}_{\exp}([0,T] \times \R^\dode)$
  to be the class of functions $\phi \in C^{m,n}([0,T]\times\R^\dode)$
  such that there exists $c > 0$ such that
  \begin{align*}
    |D^{j,k} \phi(t,x)|  \le c e^{-\frac{1}{c} |x|}, j=0,\dots,m, k = 0,1,\dots,n.
  \end{align*}
\end{definition}

We then recall the following Feynman-Kac representation for solutions to the  classical equation \eqref{eq:preroughTVP}. 

\begin{lemma}
  \label{lem:classicalFK}
  Assume
  $c,b,\sigma_i, \gamma_j,\beta_k \in C^2_b$, $i=1,\dots,\dbm$, $j,k=1,\dots,\drp$.
  Let $u$ be given as
  \begin{equation}
    u\left( t, x\right) =\E^{t,x}\left[ g\left( X_{T}\right) \exp \left(
    \int_{t}^{T}c\left( X_{r}\right) dr+\int_{t}^{T}\gamma \left( X_{r}\right) 
    \dot{W}_{r}dr\right) \right]  \label{preroughFKformula}
  \end{equation}%
  with
  \begin{equation*}
    dX_{t}=\sigma \left( X_{t}\right) dB\left( \omega \right) +b\left(
    X_{t}\right) dt+\beta \left( X_{t}\right) \dot{W}_{t}dt,
  \end{equation*}%
  where $B$ is a $\dbm$-dimensional Brownian motion and $W \in C^1( [0,T], \R^\drp )$.
  \begin{enumerate}[(i)]
    \item
      If $g \in C^2_b(\R^\dode)$ then $u$ is the unique $C_b^{1,2}([0,T]\times\R^\dode)$ solution to \eqref{eq:preroughTVP}.
      If moreover $g \in C^2_{\exp}(\R^\dode)$ then $u \in C^{1,2}_{\exp}([0,T]\times\R^\dode)$.

    \item 
      If $g \in C_b(\R^\dode)$ then
      $u \in C_b([0,T] \times \R^\dode)$ and it is the unique bounded analytically weak solution to \eqref{eq:preroughTVP},
      that is, for $\varphi \in \mathcal{D}(\R^\dode)$
      \begin{align}
        \label{eq:classicalWeakPDE}
        \left\langle u_{t},\varphi \right\rangle
        =
        \left\langle g,\varphi \right\rangle
        +
        \int_{t}^{T}\left\langle u_{r},L^{\ast }\varphi \right\rangle dr
        +
        \int_{t}^{T}\,\left\langle u_{r},\Gamma^{\ast }\varphi \right\rangle dW_{r}.
      \end{align}

  \end{enumerate}
\end{lemma}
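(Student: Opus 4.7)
The plan is to treat $W \in C^1$ as a smooth, time-dependent perturbation of the SDE coefficients, so that $dX_t = \sigma(X_t)\, dB_t + (b+\beta \dot W_t)(X_t)\, dt$ has $C^2_b$ coefficients uniformly on $[0,T]$; standard stochastic flow theory then gives that $x \mapsto X_t^{s,x}$ is $C^2$ with derivatives having finite moments of all orders. For part~(i), I would differentiate \eqref{preroughFKformula} under the expectation to conclude $u \in C_b^{1,2}([0,T]\times\R^\dode)$ when $g \in C_b^2$, and apply It\^o's formula to $M_t := u(t,X_t)\exp(\int_s^t (c+\gamma\dot W)(X_r)\,dr)$: the drift equals $\exp(\int\cdot)\,[\partial_t u + Lu + \Gamma u\,\dot W]$, so $M$ is a bounded (local) martingale precisely when $u$ solves \eqref{eq:preroughTVP}. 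Existence follows by taking $\E[\cdot]$ at $t=T$, and uniqueness in $C_b^{1,2}$ follows from running the same It\^o argument on any candidate solution.

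For the exponential-decay statement in part~(i), the starting point is the pathwise bound $|X_T^{t,x}| \ge |x| - K(T-t) - |M_{t,T}|$, where $M_{t,T} = \int_t^T \sigma(X_r)\,dB_r$ has bounded integrand and $K$ depends on $\|b\|_\infty + \|\beta\|_\infty \|\dot W\|_\infty$. Combined with Gaussian exponential moments of $|M_{t,T}|$ this yields $\E[\exp(-|X_T^{t,x}|/c)] \le C \exp(-|x|/c')$ for any $c' > c$; after differentiation of \eqref{preroughFKformula} under the expectation, with the $L^p$ norms of $\partial_x X$, $\partial_x^2 X$ absorbed by H\"older's inequality, the bound propagates to give $|D^k u(t,x)| \le C e^{-|x|/c'}$ for $k=0,1,2$, and the decay of $\partial_t u$ then follows from the PDE itself.

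For part~(ii), boundedness and joint continuity of $u$ are immediate from $|u(t,x)| \le \|g\|_\infty \exp(T(\|c\|_\infty + \|\gamma\|_\infty \|\dot W\|_\infty))$ and the pathwise continuity of $(s,x)\mapsto X_T^{s,x}$ combined with dominated convergence. To verify the weak form \eqref{eq:classicalWeakPDE}, I would approximate $g$ by $g^n := g \ast \eta_{1/n} \in C_b^2$ with $g^n \to g$ locally uniformly and $\|g^n\|_\infty \le \|g\|_\infty$. By part~(i) the associated FK solutions $u^n$ are classical and, after integration by parts against any $\varphi \in \mathcal{D}(\R^\dode)$, satisfy \eqref{eq:classicalWeakPDE}; dominated convergence (using the uniform sup-norm bound) then passes to the limit in every term, for every $\varphi$.

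The main obstacle is uniqueness among bounded weak solutions, and I would attack it by duality against the forward equation. Fix $s \in [0,T]$ and $\varphi \in \mathcal{D}(\R^\dode)$, and let $\rho^\varphi$ denote the classical solution on $[s,T]$ of $\partial_t \rho = L^*\rho + \Gamma^*_k \rho \, \dot W_t^k$ with $\rho_s = \varphi$. Since this adjoint problem has the same structure as \eqref{eq:preroughTVP} (with modified coefficients given by \eqref{eq:adjointCoefficients}) and $\varphi \in C^2_{\exp}(\R^\dode)$, applying the exponential-decay assertion of part~(i) after a trivial time reversal delivers $\rho^\varphi \in C^{1,2}_{\exp}([s,T]\times\R^\dode)$. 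Exponential decay and regularity then allow me to extend the weak formulation of any bounded solution $v$ to time-dependent test functions $\psi \in C^{1,2}_{\exp}$: write $\psi_t = \rho^\varphi_t\,\chi_R$ for a smooth cutoff $\chi_R$, apply the fixed-in-time formulation on a time partition, and send the partition mesh and $1/R$ to zero (the tail contributions being controlled by the exponential decay of $\rho^\varphi_t$ and the boundedness of $v$). For $\psi_t = \rho^\varphi_t$ the three integral terms combine against $\partial_t \rho^\varphi_t$ and cancel by the forward PDE, leaving $\langle v_s,\varphi\rangle = \langle g,\rho^\varphi_T\rangle$. The same identity holds for $u$, so $v_s = u_s$ as distributions, and hence as continuous functions. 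I expect the delicate step to be exactly this extension of the weak formulation to the non-compactly supported $\rho^\varphi_t$, which is precisely why the class $C^n_{\exp}$ of Definition~\ref{def:expDecay} has been introduced.
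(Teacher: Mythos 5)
Your proposal follows essentially the same route as the paper. Part~(i): both differentiate the Feynman--Kac formula under the expectation to get $C^{1,2}_b$ regularity (the paper cites Krylov's Theorem~V.7.4), and both establish uniqueness by an It\^o/maximum-principle argument; the paper cites Krylov's maximum principle, you run It\^o directly on $u(t,X_t)\exp(\cdots)$, which is the same mechanism. For the exponential decay the paper defers to the rough case; you give the expected elementary argument via the pathwise lower bound on $|X_T^{t,x}|$, exponential moments of the martingale part, and H\"older against moments of $\partial_x X$, $\partial_x^2 X$ --- this is precisely the argument the paper intends (compare Lemma~\ref{lem:expDecaySurvivesFeynmanKac}). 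Part~(ii): both approximate $g$ by $g^n \in C^2_b$, pass to the limit in \eqref{eq:classicalWeakPDE}, and prove uniqueness by duality against the forward dual equation: your $\rho^\varphi$ on $[s,T]$ is exactly the $\varphi$ of Lemma~\ref{lem:forwardEquationSmooth}~(ii) (the paper's cross-reference ``(iv)'' is a typo). You are somewhat more explicit about extending the weak formulation from $\mathcal{D}$ to $C^{1,2}_{\exp}$ test functions via cutoffs and time-partitions, where the paper merely invokes dominated convergence, but the idea is the same. You correctly identify the point of introducing the class $C^n_{\exp}$: without it the exponentially decaying, non-compactly-supported dual solution cannot be inserted into the weak formulation.
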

\begin{proof}
  Let us first note that the expectation actually exists,
  since $g$, $c$, $\gamma$ and $|\dot W|$ are bounded.

  (i):
  The proof amounts to taking derivatives under the expectation,
  see for example Theorem V.7.4 in \cite{bib:krylovDiffusions}, which shows that $u$ is a $C^{1,2}_b$ solution.%
  \footnote{
    In \cite{bib:krylovDiffusions} it is assumed that the term in the exponential is non-positive, but a term bounded from below poses no additional difficulty: just replace $u(t,x)$ by $u(t,x) e^{- c (T-t)}$ for $c$ sufficiently large.}
  Uniqueness follows from the maximum principle, see for example Theorem 8.1.4 in \cite{bib:krylovHolder}.

  If  $g \in C^2_{\exp}(\R^\dode)$ then
  one can show that actually $u \in C^{1,2}_{\exp}([0,T]\times\R^\dode)$.
  This is similar to the rough case in Theorem \ref{thm:roughBackwardEquation}, so we omit the proof here.

  (ii):
  Take some $g^n \in C^2_b(\R^\dode)$ converging to $g$ locally uniformly,
  uniformly bounded by $2 ||g||_\infty$
  Let $u^n$ be the corresponding classical solution from part (i).
  Then $u^n$ satisfies \eqref{eq:classicalWeakPDE} with $g$ replaced by $g^n$.
  Now by the Feynman-Kac representation, we get for every $N > 0$,
  \begin{align*}
    |u^n(t,x) - u(t,x)|
    &\lesssim
    \E[ |g^n(X^{t,x}_T) - g(X^{t,x}_T)|^2 ]^{1/2} \\
    &\le
    \sup_{|y|\le N} |g^n(y) - g(y)|
    +
    2 ||g||_{\infty} \E[ 1_{ [-N,N]^C }( |X^{t,x}_T ) ].
  \end{align*}
  Hence for every $R > 0$
  \begin{align*}
    \sup_{|x| \le R} |u^n(t,x) - u(t,x)|
    \lesssim
    \sup_{|y|\le N} |g^n(y) - g(y)|
    +
    \frac{1}{N} \sup_{|x|\le R} \E[ |X^{t,x}_T| ],
  \end{align*}
  from which the locally uniform convergence of $u^n_t$ to $u_t$ follows, uniformly in $t \le T$.
  Taking the limit in the integral equation, we then see that $u$ satisfies \eqref{eq:classicalWeakPDE}.

  To show uniqueness,
  let $u \in C_b([0,T]\times\R^\dode)$ be any solution to \eqref{eq:classicalWeakPDE}.
  It is immediate that the equation then also holds for test functions $\varphi \in C^2_c(\R^\dode)$.
  It is straightforward 
  to show that for $\varphi \in C^{1,2}_c([0,T] \times \R^\dode)$ we have
  \begin{align}
    \label{eq:classicalWeakPDETimeDependent}
    \left\langle u_{t},\varphi_t \right\rangle
    =
    \left\langle g,\varphi_T \right\rangle
    +
    \int_{t}^{T}\left\langle u_{r}, -\partial_t \varphi_r + L^{\ast }\varphi_r \right\rangle dr
    +
    \int_{t}^{T}\,\left\langle u_{r},\Gamma^{\ast }\varphi_r \right\rangle dW_{r}.
  \end{align}
  Finally, via dominated convergence, \eqref{eq:classicalWeakPDETimeDependent} also holds for
  $\varphi \in C^{1,2}_{\exp}( [0,T] \times \R^\dode)$.

  Now Lemma \ref{lem:forwardEquationSmooth} (iv) gives us for every $t \in [0,T)$, $\phi \in C^4_{\exp}(\R^d)$
  such a $\varphi$ (on $[t,T]$) that satisfies
  \begin{align*}
    \partial_s \varphi_s &= L^{\ast} \varphi_s + \Gamma^\ast \varphi_s \dot{W}_s \\
    \varphi_t &= \phi.
  \end{align*}

  Then, by \eqref{eq:classicalWeakPDETimeDependent},
  \begin{align*}
    \langle u_t, \phi \rangle
    = 
    \langle u_t, \varphi_t \rangle
    =
    \langle g, \varphi_T \rangle.
  \end{align*}
  So, tested against $\phi \in C^4_c (\R^\dode)$, all solutions coincide at every $t \in [0,T]$, which gives uniqueness in $C_b([0,T]\times\R^\dode)$.
\end{proof}

When replacing $W$ by a rough path $\mathbf{W}$, we are formally interested in the following equation
\begin{equation}
  \label{eq:backwardRPDE}
  \begin{cases}
    -du & = Lu dt+ \Gamma_k u d\mathbf{W}^k \\
    u\left( T,\cdot \right) &=g.%
  \end{cases}.
\end{equation}

We will next introduce two solution concepts, weak and regular in nature 
(see Definitions \ref{def:weakSolutionRough} and \ref{def:strongSolutionRough} below). 

\begin{definition}[{\bf analytically weak backward RPDE solution}]
  \label{def:weakSolutionRough}
  Given an $\alpha $-H\"{o}lder rough path $\mathbf{W=}\left( W,\mathbb{W}%
\right)$, $\alpha \in (1/3,1/2]$, we say that a bounded, measurable function $u=u\left( t,x;\mathbf{W}\right)
  =u_{t}\left( x;\mathbf{W}\right) $ is an analytically weak solution to
  \eqref{eq:backwardRPDE},
  if for all functions
  $\varphi \in C^3_{\exp}(\R^\dode)$,
  we have $\left(Y^\varphi, (Y^\varphi)^\prime \right) \in \D_{W}^{2 \alpha}$
  with
  \begin{equation*}
    Y_{t}^\varphi := \left\langle u_{t},\Gamma^{\ast}_i \varphi \right\rangle \in \R^\drp ,\,\,\,
    (Y_{t}^\varphi)^{\prime} := -\left\langle u_{t},\Gamma^{\ast }_j \Gamma^{\ast }_i \varphi \right\rangle \in L(\R^\drp,\R^\drp),
  \end{equation*}
  that is
  \begin{align}
    \label{eq:weakSolutionRoughControlled}
    ||Y^\varphi,(Y^\varphi)'||_{W,\alpha} < \infty,
  \end{align}
  and the following equation is satisfied
  \begin{equation}
    \label{eq:defAWRPDEsol}
    \left\langle u_{t},\varphi \right\rangle
    =
    \left\langle g,\varphi \right\rangle
    +
    \int_{t}^{T}\left\langle u_{r},L^{\ast }\varphi \right\rangle dr
    +
    \int_{t}^{T}\,\left\langle u_{r},\Gamma^{\ast }\varphi \right\rangle d\mathbf{W}_{r}, \qquad 0 \le t \le T.
  \end{equation}%
  Here, $\int Yd\mathbf{W}$ is the rough integral against $(Y,Y')$.
\end{definition}

\begin{remark}
  Different from the smooth case, Lemma \ref{lem:classicalFK}, we work
  with test functions in the larger class $C^3_{\exp}$ here.
  This is necessary, since the presence of the rough integral makes it impossible
  to automatically enlarge the space of functions for which the integral equation holds,
  as was done in the proof of Lemma \ref{lem:classicalFK}.
\end{remark}

\begin{remark}
  Heuristically, the origin of the compensator term $Y_{t}^{\prime
  }=\left\langle u_{t},\Gamma^{\ast }\Gamma^{\ast }\varphi \right\rangle $ can be seen as follows.
  One certainly expects that%
  \begin{equation*}
    \int_{s}^{t}\,\left\langle u_r,\Gamma^{\ast }\varphi \right\rangle {d}\mathbf{W}_r
    \approx \left\langle u_{s},\Gamma^{\ast }\varphi \right\rangle W_{s,t}
  \end{equation*}%
  where $a\approx b$ means $a-b=O\left( \left\vert t-s\right\vert^{2\alpha
  }\right) $.
  Hence, in view of (\ref{eq:defAWRPDEsol}),%
  \begin{equation*}
    \left\langle u_{t},\varphi \right\rangle -\left\langle u_{s},\varphi \right\rangle
    \approx -\int_{s}^{t}\,\left\langle u, \Gamma^{\ast }\varphi\right\rangle {d}\mathbf{W}
    \approx - \left\langle u_{s},\Gamma^{\ast}\varphi \right\rangle W_{s,t}
  \end{equation*}
Replacing $\varphi$ by $\Gamma^\ast \varphi$ (note that the latter is not in $C^3_{\exp}$ though) gives 
  \begin{equation*}
    \left\langle u_{t},\Gamma^{\ast }\varphi \right\rangle -\left\langle u_{s},\Gamma^{\ast }\varphi \right\rangle
    =
    -\left\langle u_{s},\Gamma^{\ast }\Gamma^{\ast}\varphi \right\rangle W_{s,t}+O\left( \left\vert v-u\right\vert ^{2\alpha }\right)
  \end{equation*}%
  so that $t\mapsto \left\langle u_{t},\Gamma^{\ast }\varphi \right\rangle $ is
  controlled by $W$, with Gubinelli derivative $-\left\langle u_{t},\Gamma^{\ast}\Gamma^{\ast }\varphi \right\rangle$.
\end{remark}

\begin{definition}[{\bf regular backward RPDE solution}]   \label{def:strongSolutionRough}
  Given an $\alpha $-H\"{o}lder rough path $\mathbf{W=}\left( W,\mathbb{W}%
\right)$, $\alpha \in (1/3,1/2]$, we say that a function $u=u\left( t,x;\mathbf{W}\right)
  \in C^{0,2}$ (with respect to $t,x$) is a solution to
  \eqref{eq:backwardRPDE}
  if $( \Gamma_k u, \Gamma_j \Gamma_k u )$ is controlled by $W$ and
  \begin{align*}
    u(t,x)
    =
    g(x)
    +
    \int_t^T L u(r,x) dr
    +
    \int_t^T \Gamma_k u(r,x) d\mathbf{W}^k_r.
  \end{align*}
\end{definition} 

\begin{remark}
  If a regular solution in the sense of Definition \ref{def:strongSolutionRough} possesses
  a uniform bound on the control (see for example \eqref{eq:roughBackwardEquationUniformControl} below)
  then it is also a weak solution in the sense of Definition \ref{def:weakSolutionRough}.
\end{remark}

\begin{theorem}
  \label{thm:roughBackwardEquation}
  Throughout, $\mathbf{W}$ is a geometric $\alpha$-H\"older rough path, $\alpha \in (1/3,1/2]$.
  Assume $\sigma_i, \beta_j \in \Lip^{3}(\R^\dode), b \in \Lip^1(\R^\dode), c \in \Lip^1(\R^\dode)$,
  $\gamma_k \in \Lip^2(\R^\dode)$.
  Consider $g \in C^0_b(\R^\dode)$.

  \begin{enumerate}[(i)]
    \item {\bf Stability.}
  Let $u = u^W$ be the solution to \eqref{eq:preroughTVP}
  as given by the Feynman-Kac representation \eqref{preroughFKformula}, whenever $W\in C^{1}$.
  Pick $W^\epsilon \in C^1$ convergent 
  in rough path sense to $\mathbf{W}$. Then there exists a bounded, continuous function $u^\mathbf{W}$, independent of the choice of the approximating sequence, so that $u^{W^\epsilon} \to u^\mathbf{W}$ uniformly. 
  The resulting map $\mathbf{W} \mapsto u^\mathbf{W}$ is continuous.
  Moreover, the following Feynman-Kac representation holds, 
  \begin{align*}
    u^\mathbf{W}(t,x) = 
    \E^{t,x}\left[ g\left( X_T \right) \exp \left( \int_t^T c\left( X_r \right) dr +\int_t^T \gamma \left( X_r \right) d\mathbf{W}_r \right) \right],
  \end{align*}
  where $X$ solves the rough SDE (see Appendix, Lemma \ref{lem:roughSDE})
  \begin{equation}
    dX_{t}=\sigma \left( X_{t}\right) dB\left( \omega \right) +b\left(
    X_{t}\right) dt+\beta \left( X_{t}\right) d\mathbf{W}_t,
  \end{equation}%
  where $B$ is a $\dbm$-dimensional Brownian motion.

  \item {\bf Analytically weak backward RPDE solution.}
  Let $u=u^{\mathbf{W}}$
  be the function
  constructed in (i).
  Then  $u=u^{\mathbf{W}} \in C_b([0,T] \times \R^\dode)$ is a bounded solution to (\ref{eq:backwardRPDE}) in the sense of Definition \ref{def:weakSolutionRough}.
  Moreover, \eqref{eq:weakSolutionRoughControlled} is
  bounded, uniformly over bounded sets of $\varphi$ in 
  $C^3_{\exp} (\R^\dode)$, and it is the only solution in the class of $C_b$ functions $u$ satisfying \eqref{eq:weakSolutionRoughControlled}.

  \item {\bf Analytically regular backward RPDE solution.}
    \ifdraft 
    \todo[color=green]
    {
      because we need $4$ space derivatives, i.e. $3 + 3$ for RP VF,
      $1 + 3$ for drift VF, $2+4$ for RP integral, $0 + 4$ for drift integral.\\
      checking that $Du$ is controlled, $D^2g$ pops up as RP integrand
               and this needs to be $C^2_b$ ..
    }
    \fi
  Assume $\sigma_i, \beta_j \in \Lip^{6}(\R^\dode), b \in \Lip^{4}(\R^\dode), c \in \Lip^4(\R^\dode)$,
  $\gamma_k \in \Lip^6(\R^\dode)$
  and $g \in C^4_b(\R^\dode)$.
  Then $u=u^{\mathbf{W}} \in C^{0,4}_b( [0,T] \times \R^\dode )$ is a bounded solution to
  (\ref{eq:backwardRPDE}) in the sense of Definition \ref{def:strongSolutionRough}.
  It is the only solution in the class of functions in $C^{0,4}_b([0,T]\times\R^\dode)$ that satisfy
  \begin{align}
    \label{eq:roughBackwardEquationUniformControl}
    \begin{split}
    \sup_x || \Gamma u(\cdot,x), \Gamma \Gamma u(\cdot,x) ||_{W,\alpha} &< \infty.
    \end{split}
  \end{align}

  If moreover $g \in C^{4}_{\exp}(\R^\dode)$, then $u \in C^{0,4}_{\exp}([0,T]\times\R^\dode)$.
  \end{enumerate}
\end{theorem}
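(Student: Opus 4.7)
The plan is to leverage the classical Feynman-Kac formula of Lemma \ref{lem:classicalFK} along a smooth approximation $W^\epsilon \to \mathbf{W}$ in rough path sense, and to pass to the limit using rough path stability. For (i), note first that the hybrid It\^o-rough SDE for $X$ admits a unique solution $X^\mathbf{W}$ by Lemma \ref{lem:roughSDE}, depending continuously on $\mathbf{W}$ in the sense of $L^p(\P)$ locally uniform convergence; in particular $X^{W^\epsilon} \to X^\mathbf{W}$ as $\epsilon \to 0$. Since $\gamma \in \Lip^2$, the integrand $\gamma(X^\mathbf{W})$ is a controlled rough path with Gubinelli derivative $D\gamma(X^\mathbf{W}) \beta(X^\mathbf{W})$, so the rough integral $\int_t^T \gamma(X^\mathbf{W}_r) d\mathbf{W}_r$ is well-defined and is the probability limit of the smooth versions $\int_t^T \gamma(X^{W^\epsilon}_r) \dot W^\epsilon_r dr$. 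The corresponding exponential is bounded uniformly in $\epsilon$ (using $\|\gamma\|_\infty < \infty$ together with uniform rough path bounds), so dominated convergence combined with boundedness and continuity of $g$ yields pointwise convergence of $u^{W^\epsilon}(t,x)$ to the Feynman-Kac quantity $u^\mathbf{W}(t,x)$. Locally uniform convergence in $(t,x)$ follows from an equicontinuity estimate based on uniform moments of $X^{t,x,W^\epsilon}$. Independence of the approximating sequence and continuity of $\mathbf{W}\mapsto u^{\mathbf{W}}$ are then automatic.

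For (ii), the smooth solution $u^\epsilon := u^{W^\epsilon}$ satisfies \eqref{eq:classicalWeakPDE} by Lemma \ref{lem:classicalFK}(ii); the only substantive step is to show that the smooth drift integral against $\dot W^\epsilon$ passes to a rough integral in the limit, the other terms being handled by dominated convergence and (i). I verify uniform-in-$\epsilon$ controlled-path bounds for $Y^{\varphi,\epsilon}_t := \langle u^\epsilon_t, \Gamma^* \varphi \rangle$ with candidate Gubinelli derivative $(Y^{\varphi,\epsilon})'_t := -\langle u^\epsilon_t, \Gamma^*\Gamma^* \varphi \rangle$ by iterating \eqref{eq:classicalWeakPDE} twice. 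The first expansion of $Y^{\varphi,\epsilon}_{s,t}$ yields a Lebesgue term of size $O(|t-s|)$ plus $-\int_s^t \langle u^\epsilon_r, \Gamma^*\Gamma^*\varphi\rangle \dot W^\epsilon_r dr$; a second application to the integrand of the latter extracts the leading contribution $-\langle u^\epsilon_s, \Gamma^*\Gamma^*\varphi\rangle W^\epsilon_{s,t}$ and leaves a remainder bounded by a constant multiple of $\|u^\epsilon\|_\infty (\|\mathbb{W}^\epsilon\|_{2\alpha} + \|W^\epsilon\|_\alpha^2) |t-s|^{2\alpha}$, with constants depending linearly on the quasinorms of $\Gamma^{*k}\varphi$ for $k\le 3$ in $C^0_{\exp}$. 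Since $\varphi \in C^3_{\exp}$ and the coefficients of $\Gamma^*$ are sufficiently smooth, these quasinorms are finite and bounded uniformly over bounded subsets of $C^3_{\exp}(\R^\dode)$. Combined with the locally uniform convergence from (i), stability of the rough integral under controlled-rough-path convergence delivers \eqref{eq:defAWRPDEsol} in the limit, together with the advertised uniform bound. Uniqueness in the stated class is obtained by duality against the regular forward RPDE of Theorem \ref{thm:roughForwardEquation}: pairing any weak solution $u$ with the forward solution $\rho$ initialized by an arbitrary $\phi \in C^4_{\exp}(\R^\dode)$, and using the rough analogue of \eqref{eq:classicalWeakPDETimeDependent} (valid on $C^3_{\exp}$ test functions by Definition \ref{def:weakSolutionRough}), shows that $\langle u_t, \phi\rangle$ is uniquely determined by $g$ and $\phi$, mimicking the endgame of the proof of Lemma \ref{lem:classicalFK}(ii).

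For (iii), the strengthened smoothness assumptions make the hybrid flow $x \mapsto X^{t,x,\mathbf{W}}$ four times continuously differentiable with bounded derivatives, obtained by differentiating the rough SDE of Lemma \ref{lem:roughSDE} and solving the resulting linear rough equations. Differentiating the Feynman-Kac representation under the expectation then gives $u^{\mathbf{W}} \in C^{0,4}_b([0,T]\times\R^\dode)$; the case $g \in C^4_{\exp}$ uses that $X^{t,x}_T - x$ has uniform Gaussian tails, preserving exponential decay. The controlled rough path property of $(\Gamma u, \Gamma\Gamma u)$ and the pointwise integral equation \eqref{eq:backwardRPDE} follow from the same two-step expansion as in (ii), now carried out pointwise in $x$ rather than tested against $\varphi$; the enhanced regularity of the coefficients and of $g$ is exactly what keeps $Lu, \Gamma u, \Gamma\Gamma u$ and $\Gamma\Gamma\Gamma u$ bounded and smooth enough for the expansion to close and its remainder to be uniformly of order $|t-s|^{2\alpha}$. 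Uniqueness within \eqref{eq:roughBackwardEquationUniformControl} then reduces, via the remark following Definition \ref{def:strongSolutionRough}, to the weak uniqueness established in (ii). The main technical obstacle, common to (ii) and (iii), is precisely this two-step closure with uniform $|t-s|^{2\alpha}$ remainder; once that is in hand, the rest is essentially bookkeeping given Lemma \ref{lem:classicalFK}, the rough SDE theory, and the forward RPDE result.
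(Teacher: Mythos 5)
Your parts (i) and the broad skeleton of the existence argument in (ii) (pass to the rough limit via stability of the rough integral) agree with the paper; likewise the forward/backward duality used for uniqueness. The genuine divergence, and the gap, is in how you establish the controlled-path estimate for $Y^{\varphi,\epsilon}_t := \langle u^\epsilon_t, \Gamma^*\varphi\rangle$.

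The paper does \emph{not} iterate the weak PDE \eqref{eq:classicalWeakPDE}. Instead it uses the Feynman--Kac representation $u(t,x)=\E[g(X^{t,x}_T)]$, rewrites $\langle u_t,\bar\varphi\rangle$ (with $\bar\varphi := \Gamma^*\varphi$) via the flow $\Phi$ of the hybrid RDE and a change of variables, and then reads the controlled structure off the flow-and-Jacobian quantity $\bar\varphi(\Phi^{-1}_{t,T})\det(D\Phi^{-1}_{t,T})$; this is the content of Lemma \ref{lem:flowWithDeterminant}. That lemma only demands $\bar\varphi \in C^2_b$, i.e.\ $\varphi \in C^3$, because all $x$-derivatives are moved onto the test function through the Jacobian while $g$ remains untouched. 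Your iterative scheme cannot close at the $C^3$ level: after extracting $-\langle u^\epsilon_s,\Gamma^*\Gamma^*\varphi\rangle W^\epsilon_{s,t}$, the residual integral $-\int_s^t \langle u^\epsilon_r - u^\epsilon_s,\Gamma^*\Gamma^*\varphi\rangle\,\dot W^\epsilon_r\,dr$ cannot be bounded by $|t-s|^{2\alpha}$ uniformly in $\epsilon$ simply by a crude estimate, because $\int_s^t |\dot W^\epsilon|\,dr$ blows up; the only way to produce a term proportional to $\mathbb{W}^\epsilon_{s,t}$ is to apply \eqref{eq:classicalWeakPDE} once more with test function $\Gamma^*\Gamma^*\varphi$, and that brings in $L^*\Gamma^*\Gamma^*\varphi$, which is only defined for $\varphi \in C^4$. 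Your claim that the remainder constants depend only on $\Gamma^{*k}\varphi$, $k\le 3$, overlooks this $L^*$ term. One cannot substitute a time-H\"older estimate on $u^\epsilon$ either: with $g$ only in $C_b$ there is no uniform-in-$\epsilon$ H\"older-in-time bound on $u^\epsilon$ at the level of $L^\infty$ pairings against $C^1_{\exp}$ functions. So the two-step expansion, as written, proves the statement only for $\varphi \in C^4_{\exp}$, strictly weaker than claimed, and also does not yield a bound expressed purely in $C^3_{\exp}$-quasinorms (so density in $\varphi$ does not repair it). This is precisely the obstruction the flow/Jacobian device of Lemma \ref{lem:flowWithDeterminant} is designed to remove.

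For (iii), the paper again works via the Feynman--Kac/flow representation, differentiating under the expectation and using the adjoint-flow identity of Lemma \ref{lem:flowLeftPoint} to see that $\Gamma u$ is controlled pointwise in $x$; your proposal to redo the two-step expansion pointwise inherits the same $C^4$-vs-$C^3$ issue in spirit, although it is less severe in (iii) since $g\in C^4_b$ there. Your reduction of regular backward uniqueness to weak backward uniqueness via the remark after Definition \ref{def:strongSolutionRough} is a legitimate shortcut and differs mildly from the paper's route (which goes through weak forward existence), but this is a minor structural variant, not a gap. The real issue to fix is the uniform controlled-path estimate in (ii); without the flow change-of-variables, the claimed test-function class $C^3_{\exp}$ is not reached.
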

\begin{remark}
  We consider solutions in $C^{0,4}_b$, instead of the obvious choice $C^{0,2}_b$,
  because
  of two reasons. First, in order to show that $u$ is controlled by $W$
  we need $g \in C^4_b(\R^\dode)$ which automatically gives us $u \in C^{0,4}_b([0,T]\times\R^\dode)$.
  Second, this additional regularity is needed for the uniqueness proof via duality.
\end{remark}
\begin{remark}
  Results of the type in Theorem \ref{thm:roughBackwardEquation} (i), even in nonlinear situations, were obtained in 
  \cite{CF,CFO,DF12,CDFO13,FO14}. However, in all these references, the only
  intrinsic meaning of these equations was given in terms of a transformed
  equation, somewhat in the spirit of the Lions-Souganidis \cite{LSFully} theory of stochastic viscosity solutions. On the contrary, part (ii) and (iii) of the above theorem present a direct intrinsic characterization.
  See also \cite[Chapter 3]{bib:catellier}.

\end{remark}

\begin{proof}
  (i) This follows from stability of ``rough SDEs'', see Lemma \ref{lem:roughSDE}.

  (ii)
  \textbf{Existence}
  For simplicity only, we take $c=\gamma=b=0$ so that 
  \begin{eqnarray*}
    u\left( s,x\right) &=&\mathbb{E}\left[ g\left( X_{T}^{s,x}\right) \right] , 
    \notag \\
    dX_{t} &=&\sigma \left( X_{t}\right) dB_t\left( \omega \right) +\beta \left(
    X_{t}\right) d\mathbf{W}_t.
  \end{eqnarray*}%
  (With $X^{s,x}$ we mean the unique solution started at $X_{s}=x$.)
  In the following we consider the above SDE as an RDE w.r.t. the joint lift 
  ${\bf Z} = (Z, \mathbb Z)$ of $\bf W$ and the Brownian motion $B$ (see Lemma \ref{lem:jointLift} and Lemma \ref{lem:roughSDE} below). Denote with $\Phi$ its associated flow. 
  
  Recall $Y_t = \langle u_t, \bar \varphi \rangle$,
  $Y'_t = -\langle u_t, \Gamma^* \bar \varphi \rangle$, where $\bar \varphi := \Gamma^* \varphi$. Since $\varphi \in C^3_{\exp}$ and $\beta_j \in C^2_b$, $j=1,\dots,\drp$, we
  have that $\bar \varphi \in C^2_{\exp}$.
  Then
  \begin{align*}
    &Y_t - Y_s - Y'_t W_{s,t} \\
    &=
    \E\left[ \int_{\R^\dode} \left\{ g( \Phi_{t,T}(x) ) - g( \Phi_{s,T}(x) ) \right\} \bar \varphi(x)
    + g(\Phi_{t,T}(x)) \Gamma^* \bar \varphi( x ) W_{s,t} dx \right] \\
    &=
    \E\Bigl[ \int_{\R^\dode}
       g(y)
       \Bigl\{
       \bar \varphi( \Phi^{-1}_{t,T}(y) ) \det( D\Phi^{-1}_{t,T}(y) )
     - \bar \varphi( \Phi^{-1}_{s,T}(y) ) \det( D\Phi^{-1}_{s,T}(y) ) \\ 
      &\qquad \qquad \qquad 
      + \Gamma^* \bar \varphi( \Phi^{-1}_{t,T}(y) ) \det( D\Phi^{-1}_{t,T}(y)      ) W_{s,t} \Bigr\} dy \Bigr] \\
    &=
    \E\Bigl[ \int_{\R^\dode}
       g(y)
       \Bigl\{
       \bar \varphi( \Phi^{-1}_{t,T}(y) ) \det( D\Phi^{-1}_{t,T}(y) )
     - \bar \varphi( \Phi^{-1}_{s,T}(y) ) \det( D\Phi^{-1}_{s,T}(y) ) \\
     &\qquad \qquad \qquad
     + \Gamma^* \bar \varphi( \Phi^{-1}_{t,T}(y) ) \det( D\Phi^{-1}_{t,T}(y) ) W_{s,t}
     + \Gamma^* \bar \varphi( \Phi^{-1}_{t,T}(y) ) \det( D\Phi^{-1}_{t,T}(y) ) B_{s,t}
    \Bigr\} 
     dy
   \Bigr].
  \end{align*}
  Note that $\Gamma^{\ast}\varphi =-\operatorname{div}\left( b\varphi \right)$.
  Hence the term in curly brackets is bounded in absolute value, using Lemma \ref{lem:flowWithDeterminant}, by a constant times
  \begin{align*}
    ||\bar \varphi||_{C^3_b( M(y) )} \exp( C N_{1;[0,T]}(\mathbf{Z}) ) \left( ||\mathbf{Z}||_\beta + 1 \right)^{17 + 3 \dode} |t-s|^{2 \alpha}.
  \end{align*}
  Hence
  \begin{align*}
    |Y_t - Y_s - Y'_t W_{s,t}|
    \lesssim
    \int_{\R^\dode}
      \E\Bigl[ ||\varphi||_{C^3_b( M(y) )} \exp( C N_{1;[0,T]}(\mathbf{Z}) ) \left( ||\mathbf{Z}||_\beta + 1 \right)^{17 + 3 \dode} \Bigr]
      dy \, |t-s|^{2\alpha}\, . 
  \end{align*}
  Next observe that   
   \begin{align*}
    \E\Bigl[ ||\varphi||_{C^3_b( M(y) )} & \exp( C N_{1;[0,T]}(\mathbf{Z}) ) \left( ||\mathbf{Z}||_\beta + 1 \right)^{17 + 3 \dode} \Bigr]  \\
    & \le
    \E\Bigl[ ||\varphi||^3_{C^2_b( M(y) )} \Bigr]^{1/2} 
    \E\Bigl[ \exp( C N_{1;[0,T]}(\mathbf{Z}) )  
    \left( ||\mathbf{Z}||_\beta + 1 \right)^{17 + 3 \dode} \Bigr]^{1/2},
  \end{align*}
  and Lemma \ref{lem:jointLift} now implies that the last term is bounded 
  and Lemma \ref{lem:expDecaySurvivesFeynmanKacII} implies that the first 
  term decays exponentially in $y$. Therefore 
  \begin{align*}
    |Y_t - Y_s - Y'_t W_{s,t}| \lesssim |t-s|^{2\alpha},
  \end{align*}
  as desired.
 
  The estimate $|Y'_t - Y'_s| \le C |t-s|^{\alpha}$ is shown analogously,
  and then
  \begin{align*}
    |Y_t - Y_s - Y'_s W_{s,t}|
    \le
    |Y_t - Y_s - Y'_t W_{s,t}|
    +
    |Y'_{s,t} W_{s,t}|
    = O(|t-s|^{2\alpha}),
  \end{align*}
  as desired.

  It remains to show that the integral equation \eqref{eq:defAWRPDEsol} is satisfied.
  For this let $W^n$ be a sequence of smooth paths converging to $\mathbf{W}$ in $\alpha$-rough path metric.
  Let $u^n$ be the solution to \eqref{eq:classicalWeakPDE} as given by Lemma \ref{lem:classicalFK} (ii).

  Part (i) of the theorem now implies that $u^n$ converges locally uniformly to $u$, hence the convergence of all the terms in \eqref{eq:defAWRPDEsol}
  except the rough integral is immediate.
  For the rough integral, in view of Theorem 9.1 in \cite{FH14}, it is enough to show that
  \begin{align*}
    &\sup_n ||{Y'}^n||_{\alpha} < \infty
    \qquad \sup_n ||{Y'}^n - Y'||_\infty \to 0 \\
    &\sup_n ||R^n||_{2 \alpha} < \infty
    \qquad \sup_n ||R^n - R||_\infty \to 0,
  \end{align*}
  with $Y^n_t := \langle u^n_t, \Gamma^* \varphi \rangle, {Y'}^n := \langle u^n_t, \Gamma^* \Gamma^* \varphi \rangle$ and
  \begin{align*}
    R^n_{s,t} = \langle u^n_t - u^n_s, \Gamma^* \varphi \rangle - \langle u^n_s, \Gamma^* \Gamma^* \varphi \rangle W^n_{s,t}.
  \end{align*}
  The first two statements follow from the fact that the preceding considerations were uniform 
  for $\mathbf{W}$ bounded in rough path norm.
  Finally, convergence in supremum norm of ${Y'}^n_t - Y'_t = \langle u^n_t - u_t, \Gamma^* \Gamma^* \varphi \rangle$ 
  and $R^n_{s,t} - R_{s,t}$ follows from local uniform convergence of $u^n$.

  \textbf{Uniqueness}
  Let $\phi \in C^{0,3}_{\exp}([t,T],\R^\dode)$ be such that
  \begin{align*}
    \phi(t,x) = \varphi(x) + \int_t^T \alpha(r,x) dr + \int_t^T \eta_i(r,x) d\mathbf{W}^i_r,
  \end{align*}
  with $\eta \in C^{0,3}_{\exp}( [0,T] \times \R^\dode )$,
  and $(\eta_{i=1,\dots,\drp},\eta'_{i,j=1,\dots,\drp})$ controlled by $W$.
  Assume moreover
  for some $\delta > 0$
  \begin{align*}
    ||\eta(x),\eta'(x)||_{W,\alpha} \lesssim e^{-\delta |x|}, \\
    ||D\eta(x),D\eta'(x)||_{W,\alpha} \lesssim e^{-\delta |x|}.
  \end{align*}
  Then by Lemma \ref{lem:productRuleWeak}
  \begin{align*}
    \langle u_T, \phi_T \rangle
    =
    \langle u_t, \phi_t \rangle
    -
    \int_t^T \langle u_r, L^* \phi_r \rangle dr
    -
    \int_t^T \langle u_r, \Gamma^*_k \phi_r \rangle d\mathbf{W}^k_r
    +
    \int_t^T \langle u_r, \alpha(r) \rangle dr
    +
    \int_t^T \langle u_r, \eta_k(r) \rangle d\mathbf{W}^k_r.
  \end{align*}

  So it remains to find, for given $\varphi$, such a $\phi$
  with $\alpha(r) = L^* \phi(r)$,
  $\eta_i(r) = \Gamma^*_i \phi(r)$
  and $\eta'_{i,j}(r) = \Gamma^*_j \Gamma^*_i \phi(r)$.
  But this is exactly what Theorem \ref{thm:roughForwardEquation} (iii) gives us for
  $\varphi \in C^4_{\exp}(\R^\dode)$.
  Then
  \begin{align*}
    \langle g, \phi_T \rangle
    = \langle u_T, \phi_T \rangle
    = \langle u_t, \phi_t \rangle
    = \langle u_t, \varphi \rangle,
  \end{align*}
  which gives uniqueness of $u_t$. This holds for all $t \in [0,T]$, which gives uniqueness of $u$.

  ~\\

  ~\\
  (iii)
  Again, for simplicity only, we take $c=\gamma=b=0$ so that
  \begin{align*}
    u(t,x) = \E\left[ g\left( X^{t,x}_T \right) \right].
  \end{align*}

  Then
  \begin{align*}
    Du(t,x)
    =
    \E[ Dg( X^{t,x}_T ) D X^{t,x}_T ].
  \end{align*}
  indeed, using the integrability of $DX$ given by Lemma \ref{lem:roughSDE} and the fact that $Dg$ is bounded,
  the statement follows from interchanging differentiation and integration,
  see for example
  Theorem 8.1.2 in \cite{bib:friedlander}.

  Then by Lemma \ref{lem:flowLeftPoint}
  \begin{align*}
    &
    Du(t,x)
    -
    Du(s,x) \\
    &=
    \E[ Dg( X^{t,x}_T ) D X^{t,x}_T - Dg( X^{s,x}_T ) D X^{s,x}_T ] \\
    &=
    \E\left[ \int_s^t Dg( X^{r,x}_T ) dDX^{r,x}_T
        +
      \int_s^t  D^2 g( X^{r,x}_T ) \langle dX^{r,x}_T, DX^{r,x}_T \cdot \rangle  \right] \\
    &=
    \E\Bigl[ \int_s^t Dg( X^{r,x}_T ) D^2 X^{r,x}_T \langle V(x) dZ_r, \cdot \rangle
        +
        \int_s^t Dg( X^{r,x}_T ) DV(x) DX^{r,x}_T dZ_r \\
        &\qquad
        +
        \int_s^t D^2 g( X^{r,x}_T ) \langle DX^{r,x}_T V(x) dZ_r, DX^{r,x}_T \cdot \rangle \Bigr] + O(|t-s|) \\
    &=
    \E[ Dg(X^{s,x}_T ) D^2 X^{s,x}_T \langle \beta(x) W_{s,t}, \cdot \rangle ]
    +
    \E[ Dg(X^{s,x}_T ) D X^{s,x}_T D\beta(x) ] W_{s,t} \\
    &\qquad
    +
    \E[ D^2 g( X^{s,x}_T ) \langle DX^{s,x}_T V(x) W_{s,t}, DX^{s,x}_T \cdot \rangle ]
    +
    O(|t-s|^{2\alpha}) \\
    &=
    \partial_x[ \beta(x) Du(s,x) ] W_{s,t}
    +
    O(|t-s|^{2\alpha})
  \end{align*}
  So $\Gamma u$ is controlled as claimed and
  \eqref{eq:roughBackwardEquationUniformControl} is satisfied.
  %
  Showing that $u \in C^{0,4}_b$ also follows from differentiation under the expectation and
  the proof that the integral equation is satisfied now follows by using smooth approximations to $\mathbf{W}$, as in part (ii).

  Uniqueness follows from existence of the measure-valued forward equation.
  The argument is dual to the one that will be used in the proof of Theorem \ref{thm:roughForwardEquation} (ii), so we omit the proof here.

  Finally, the exponential decay of $u$, if $g \in C^4_{\exp}$, follows from Lemma \ref{lem:expDecaySurvivesFeynmanKac}. 
\end{proof}

\section{The forward equation}

\renewcommand{\L}{\mathcal{L}} 
\renewcommand{\sp}{W} 
\newcommand{\F}{\mathcal{F}}

We now consider the forward equation 
\begin{align}
  \label{eq:preroughIVP}
  \begin{cases}
    \partial_t \rho_t &= L^* \rho_t + \Gamma^*_k \rho_t \dot{W}^k_t \\
    \rho_0 &= p_0.
  \end{cases}
\end{align}
on the space $\mathcal{M}(\R^\dode)$ of finite measures on $\R^\dode$. 

Equation \eqref{eq:preroughIVP} is dual to the backward equation - considered in the previous section - in a sense that will be made precise in the following (see in particular 
Corollary \ref{cor:Duality} below).

The space $\mathcal{M}(\R^\dode)$ is endowed with the weak topology; that 
is $\mu_n \to \mu$ if $\mu_n( f ) \to \mu(f)$ for all $f \in C_b(\R^\dode)$. 
It is metrizable with compatible metric given by the Kantorovich-Rubinstein metric $d$, defined as 
\begin{align*}
  d(\mu,\nu) := \sup_{||f||_{\Lip^1(\R^\dode)} \le 1} 
  \left|\int_{\R^\dode} f(x) \nu(dx) - \int_{\R^\dode} f(x) \mu(dx)\right| 
\end{align*}
(see Chapter 8.3 in \cite{bib:bogachevII}). A compatible metric on the space of continuous finite-measure-valued paths is then given by 
$d(\mu_\cdot, \rho_\cdot) := \sup_{t\le T} d(\mu_t, \rho_t)$.

\begin{lemma}
  \label{lem:forwardEquationSmooth}
  Assume
  $c,b,\sigma_i, \gamma_j,\beta_k \in C^2_b$, $i=1,\dots,\dbm$, $j,k=1,\dots,\drp$. Define for $W \in C^1$ the measure valued process $\rho$ via its action on $f \in C_b(\R^\dode)$ as
  \begin{align}
    \label{eq:feynmanKacRho}
    \rho_t( f ) :=
    \E^{0,\nu}\left[ f\left( X_{t}\right) \exp \left( \int_{0}^{t}c\left( X_{s}\right) ds+\int_{0}^{s}\gamma \left( X_{s}\right) \dot{W}_{s}ds\right) \right],
  \end{align}
  where $\nu \in \mathcal{M}(\R^\dode)$ is the initial condition of the diffusion $X$ with dynamics
  \begin{equation*}
    dX_{t}=\sigma \left( X_{t}\right) dB\left( \omega \right) +b\left(
    X_{t}\right) dt+\beta \left( X_{t}\right) \dot{W}_{t}dt,
  \end{equation*}%
  where $B$ is a $\dbm$-dimensional Brownian motion.

  \begin{enumerate}[(i)]
  \item 

  Then $\rho$ is the unique, continuous
  $\mathcal{M}(\R^\dode)$-valued path which satisfies,
  for all $f \in C^2_b(\R^\dode)$,
  \begin{align}
    \label{eq:measureEvolution}
    \rho_t(f)
    =
    \nu(f)
    +
    \int_0^t \rho_s( L f ) ds
    +
    \int_0^t \rho_s( \Gamma_k f ) d\sp^k_s.
  \end{align}

    \item Assume moreover
      $\sigma_i \in C^4_b (\R^\dode )$, $i=1,\dots,\dbm$,
      $b, \beta_k \in C^3_b (\R^{\dode} )$, $k=1,\dots,\drp$.
      If $\nu$ has a density $p_0 \in C^2_{\exp}(\R^\dode)$ then
      $\rho_t$ has a density $p \in C^{1,2}_{\exp}([0,T]\times \R^\dode)$
      which is the unique bounded classical solution to \eqref{eq:preroughIVP}.
  \end{enumerate}

\end{lemma}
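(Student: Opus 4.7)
For (i), I would first verify the integral equation \eqref{eq:measureEvolution} by a direct It\^o computation. Setting $Z_r := \exp(\int_0^r c(X_s)\,ds + \int_0^r \gamma(X_s) \dot W_s\,ds)$, It\^o's formula applied to $f(X_r) Z_r$ for $f \in C^2_b$ assembles the drift into $Lf$ and $\Gamma_k f\,\dot W^k$:
\[
d(f(X_r)Z_r) = Z_r\,Lf(X_r)\,dr + Z_r\,\Gamma_k f(X_r)\,\dot W^k_r\,dr + dM_r,
\]
with $M$ a martingale (bounded integrand, since $Df,\sigma,Z$ all are). Taking expectations and using Fubini yields \eqref{eq:measureEvolution}. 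Continuity of $t\mapsto \rho_t$ in the Kantorovich--Rubinstein metric reduces to the uniform estimate $|\rho_t(f)-\rho_s(f)| \le \|f\|_{\Lip^1}(\|Z\|_\infty\,\E|X_t-X_s|+\E|Z_t-Z_s|)$, both pieces vanishing as $|t-s|\to 0$ by path-continuity of $X$ and boundedness of $c,\gamma,\dot W$.

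Uniqueness in (i) follows by dualizing against the backward equation. If $\rho^1,\rho^2$ are two solutions with common initial condition $\nu$, then $\mu_s:=\rho^1_s-\rho^2_s$ is a signed-measure-valued path of uniformly bounded total variation satisfying the integral equation with $\mu_0=0$. For fixed $t\in(0,T]$ and $f\in C^2_b$, let $u \in C_b^{1,2}([0,t]\times \R^\dode)$ be the backward solution from Lemma \ref{lem:classicalFK}(i) with terminal data $f$. A standard extension of the integral equation for $\mu$ to time-dependent test functions $g\in C^{1,2}_b$ (first for separable $\alpha(s)\phi(x)$, then by Riemann-sum approximation) applied to $g = u$ yields
\[
\mu_t(u(t,\cdot)) - \mu_0(u(0,\cdot)) = \int_0^t \mu_r(\partial_r u + Lu)\,dr + \int_0^t \mu_r(\Gamma_k u)\,\dot W^k_r\,dr = 0,
\]
since $\partial_r u + Lu = -\Gamma_k u\,\dot W^k_r$. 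As $\mu_0 = 0$ and $u(t,\cdot) = f$, this gives $\mu_t(f) = 0$, and measure-determinacy of $C^2_b$ yields $\mu_t = 0$.

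For (ii), the plan is to first construct a bounded classical solution $p$ directly via Lemma \ref{lem:classicalFK}(i) applied to the time-reversed adjoint equation, then identify $p(t,\cdot)$ with the density of $\rho_t$ by a further duality. Under the strengthened hypotheses on $\sigma,b,\beta$, a direct check using \eqref{eq:adjointCoefficients} confirms that $\tilde b, \tilde c, \tilde\beta_k, \tilde\gamma_k$ all lie in $C^2_b$ (e.g.\ $\tilde c$ involves $\partial_{ij} a$ with $a=\sigma\sigma^T$, for which $\sigma \in C^4_b$ suffices; the others are analogous). Setting $\tilde W_s := W_T - W_{T-s}$ (so $\dot{\tilde W}_s = \dot W_{T-s}$) and $q(s,x) := p(T-s,x)$ turns the forward IVP for $p$ into the backward TVP
\[
-\partial_s q = L^* q + \Gamma^*_k q\,\dot{\tilde W}^k_s, \qquad q(T,\cdot) = p_0 \in C^2_{\exp}.
\]
Lemma \ref{lem:classicalFK}(i), applied with the adjoint coefficients, then produces a unique $C^{1,2}_{\exp}([0,T]\times\R^\dode)$ solution $q$, hence $p\in C^{1,2}_{\exp}$.

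To identify $p(t,\cdot)$ as the density of $\rho_t$, fix $f\in C^2_b$ and let $u$ again be the backward Feynman--Kac solution on $[0,t]$ with terminal data $f$. The Feynman--Kac formula for $u(0,\cdot)$ combined with conditioning on $X_0\sim \nu$ gives $\rho_t(f) = \E^{0,\nu}[f(X_t)Z_t] = \int u(0,x)p_0(x)\,dx$. Meanwhile,
\[
\frac{d}{ds}\int_{\R^\dode} u(s,x)p(s,x)\,dx = \int [(-Lu-\Gamma_k u\,\dot W^k)p + u(L^*p+\Gamma^*_k p\,\dot W^k)]\,dx = 0,
\]
where the integration by parts is legitimate because $p\in C^{1,2}_{\exp}$ kills boundary terms against the only-bounded $u$; integrating over $[0,t]$ yields $\int u(0,x)p_0(x)\,dx = \int f(x)p(t,x)\,dx$. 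Combining, $\rho_t(f) = \int f(x)\,p(t,x)\,dx$ for all $f \in C^2_b$, so $p(t,\cdot)$ is the density of $\rho_t$. Uniqueness of bounded classical solutions follows from the same identity: two such solutions with common initial datum coincide when tested against $C^2_c$. The main technical obstacle throughout is ensuring these integrations by parts are admissible, which is precisely why the exponential decay must be propagated from $p_0$ to $p$ via Lemma \ref{lem:classicalFK}(i).
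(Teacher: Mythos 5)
Your proof is correct and takes essentially the same route as the paper's: in (i) the integral equation comes from It\^o's formula applied to $f(X_r)Z_r$ plus Fubini, and uniqueness is obtained by dualizing against the backward Feynman--Kac solution from Lemma \ref{lem:classicalFK}(i) after extending the integral equation to time-dependent test functions; in (ii) the time reversal of the adjoint equation is applied with the coefficients from \eqref{eq:adjointCoefficients} (whose $C^2_b$ regularity you verify correctly under the strengthened hypotheses), giving $p\in C^{1,2}_{\exp}$ via Lemma \ref{lem:classicalFK}(i), and the identification with $\rho$ is again a duality argument. The only cosmetic difference is that the paper identifies $p$ with $\rho$'s density by checking that $\int f\,p_t\,dx$ satisfies \eqref{eq:measureEvolution} and then invoking uniqueness from (i), whereas you recompute the duality $\frac{d}{ds}\int u\,p\,dx = 0$ directly --- the same mechanism in a slightly different order.
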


\begin{remark}
  We choose $p_0 \in C^2_{\exp}(\R^\dode)$ in part (ii) since 
  this is what we shall work with in the rough case.
  In the smooth case, the assumptions on the density $p_0$ can be weakened.
  Assume for example that $\nu$ has a density $p_0 \in C^2_b \cap L^1$. Then $\rho_t$ has a $C^2_b$
  density $p_t$ for all $t\ge 0$ and $p \in C^{1,2}_b$ is the unique bounded classical solution to \eqref{eq:preroughIVP}.
    Moreover,
    \begin{align}
      \label{eq:L1norm}
      ||p_t||_{L^1(\R^\dode)} = \rho_t(1) = \E^{0,\nu}\left[ \exp \left( \int_{0}^{t}c\left( X_{s}\right) ds+\int_{0}^{t}\gamma \left( X_{s}\right) \dot{W}_{s}ds\right) \right].
    \end{align}

  Indeed, by the smoothness assumptions on the coefficients,  \eqref{eq:preroughIVP} has a unique 
  solution in $C^{1,2}_b$ (this can again be seen by a Feynman-Kac argument, as in Lemma \ref{lem:classicalFK}).

  We have to show that the unique classical solution $p_t \in C_b^2$ of \eqref{eq:preroughIVP} with non-negative initial condition $p_0 \in C^2_b \cap L^1$
  is integrable. First recall that from the maximum principle, $p_t \ge 0$ for all $t\ge0$
  (see for example Theorem 8.1.4 in \cite{bib:krylovHolder}).
  Note that \eqref{eq:preroughIVP} implies that $\frac d{dt}\int \varphi p_t \, dx = \int {\tilde L}_t \varphi p_t \, dx$, 
  where ${\tilde L}_t \phi := L \phi + \Gamma_k \phi \dot{W}^k_t$, hence
  \begin{equation} 
    \label{eqErg1} 
    \int \varphi p_t\, dx = \int \varphi p_0\, dx + \int_0^t \int \tilde L_s \varphi p_s \, dx \, ds \,  
  \end{equation} 
  for any smooth and compactly supported function $\varphi$. Our aim now is to extend this equality to the constant function 
  $\varphi\equiv 1$. To this end consider for $\varepsilon > 0$ the function 
  $$ 
    \varphi_\varepsilon (x) := \varphi \left( \varepsilon\|x\|^2\right) \, , 
  $$ 
  where $\varphi (r) = (1+r)^{-\frac{e+1}2}$, $r \ge 0$. It is easy to check that both $\varphi_\varepsilon$ and 
  $$ 
  \begin{aligned} 
  L\varphi_\varepsilon (x) 
  & = - \varepsilon (e+1)\left( 1+\varepsilon \|x\|^2\right)^{-\frac{e+3}2} \left(\sum_{ij} (\sigma\sigma^T)_{ij} (x) 
  + \sum_i ({\tilde b}_i)_t (x) x_i \right)  + c\varphi_\varepsilon (x) \\ 
  & \qquad 
  + \varepsilon^2 (e+1)(e+3)\left( 1+\varepsilon \|x\|^2\right)^{-\frac{e+5}2} \left(\sum_{ij} (\sigma\sigma^T)_{ij} (x)x_i x_j\right)  
  \end{aligned} 
  $$ 
  are integrable. Since the coefficients $\sigma\sigma^T$ and $\tilde b_t := b + \Gamma_k \dot{W}^k_t$ have at most linear growth and $c$ is bounded, there exists a 
  finite constant $M$, independent of $\varepsilon$, such that   
  $$ 
    \tilde L_s \varphi_\varepsilon \le M \varphi_\varepsilon \, . 
  $$ 
  
  Next fix a smooth compactly supported test function $\chi$ on $\mathbb R$ satisfying $1_{[-1,1]}\le \chi\le 1_{[-2,2]}$ and let 
  $\chi_N (x) := \chi\left(\frac{\|x\|^2}{N^2}\right)$. Then $\chi_N\varphi_\varepsilon$ is compactly supported and 
  $$  
  \begin{aligned} 
  \tilde L_t \left( \chi_N\varphi_\varepsilon \right) 
  & = \chi_N \tilde L_t \varphi_\varepsilon - 4\varepsilon \frac{e+1}{N^2} \chi^\prime \left(\frac{\|x\|^2}{N^2}\right) 
  \left( 1+ \varepsilon\|x\|^2\right)^{-\frac{d+3}{2}} \sum_{ij}(\sigma\sigma^T) (x) x_ix_j \\
  & \qquad + ( (L_0)_t \chi_N)\varphi_\varepsilon 
  \end{aligned} 
  $$ 
  where $({\tilde L}_0)_t u = {\tilde L}_t u - cu$. Again due to the assumptions on the coefficients of $L$ (resp. $L_0$) we obtain that $L_0\chi_N$ is uniformly 
  bounded in $N$, so that $|\tilde{L}_t \left( \chi_N\varphi_\varepsilon \right)|$ is uniformly bounded in $N$ in terms of $\varphi_\varepsilon$ and 
  $|{\tilde L}_t\varphi_\varepsilon|$. Since $\tilde L_t \left( \chi_N\varphi_\varepsilon \right) \to \tilde L_t \varphi_\varepsilon$ pointwise, Lebesgue's dominated 
  convergence now implies that \eqref{eqErg1} extends to the limit $N\to\infty$, hence   
  
  \medskip 
  \begin{equation} 
  \label{eqErg2}
  \begin{aligned}  
  \int \varphi_\varepsilon p_t\, dx 
  & = \int \varphi_\varepsilon p_0\, dx + \int_0^t \int \tilde L_s \varphi_\varepsilon p_s \, dx \, ds \\ 
  & \le \int \varphi_\varepsilon p_0\, dx + M \int_0^t \int \varphi_\varepsilon p_s \, dx \, ds\, . 
  \end{aligned} 
  \end{equation} 
  Gronwall's lemma now implies that 
  $$
  \int \varphi_\varepsilon p_t \, dx \le e^{Mt} \int \varphi_\varepsilon p_0 \, dx\, . 
  $$ 
  Since $p_0$ is integrable, we can now take the limit $\varepsilon\downarrow 0$ to conclude with Fatou's lemma that 
  $$ 
  \int p_t \, dx \le e^{Mt} \int p_0\, dx < \infty\, . 
  $$ 
  
  Hence $\nu_t(f) := \int p_t(x) f(x) dx$ defines a finite-measure valued path
  and it satisfies \eqref{eq:measureEvolution}. By uniqueness it hence coincides with $\rho$.
  The expression for the $L^1$-norm of $p_t$ then follows from (i).
\end{remark}

\begin{proof}
  (i):
  Equation \eqref{eq:measureEvolution} is satisfied
  by an application of It\=o's formula,
  see for example Theorem 3.24 in \cite{bib:bainCrisan} for a similar argument.
  Uniqueness follows as in Theorem 4.16 in \cite{bib:bainCrisan}.
  Let us sketch the argument.%
  \footnote
  {
    Our setting here is simpler then in \cite{bib:bainCrisan}, since our coefficients and their derivatives are bounded.
  }
  First one shows that every solution to \eqref{eq:measureEvolution}
  also satisfies for $\varphi \in C^{1,2}_b([0,T], \R^\dode)$
  \begin{align}
    \label{eq:measureEvolutionTimeDependent}
    \rho_t(\varphi)
    =
    \nu(\varphi)
    +
    \int_0^t \rho_s( \partial_t \varphi + L \varphi ) ds
    +
    \int_0^t \rho_s( \Gamma_k \varphi ) d\sp^k_s.
  \end{align}
  Given now $\Phi \in C^\infty_b(\R^\dode)$ and $t \le T$ consider any
  solution $v \in C^{1,2}_b([0,t], \R^\dode)$ to the backward equation 
  \begin{align*}
    -\partial_t v &= L v + \Gamma_k v \dot{\sp}^k_t  \\
    v(t,\cdot) &= \Phi,
  \end{align*}
  the existence of which follows from Lemma \ref{lem:classicalFK}.

  Given two solutions $\rho, \bar \rho$ to \eqref{eq:measureEvolution}, we then have, by \eqref{eq:measureEvolutionTimeDependent},
  \begin{align*}
    \rho_t( \Phi )
    =
    \rho_t( v_t )
    =
    \rho_0( v_0 )
    =
    \bar \rho_0( v_0 )
    =
    \bar \rho_t( v_t )
    =
    \bar \rho_t( \Phi ),
  \end{align*}
  so $\rho_t, \bar \rho$ coincide on $C^\infty_b$.
  By pointwise uniformly bounded convergence they then also coincide on $C_b$, and hence $\rho_t = \bar \rho_t$ as desired.

  (ii):
  The coefficients of the dual equation, see \eqref{eq:adjointCoefficients}, all are in $C^2_b$.
  Hence using Lemma \ref{lem:classicalFK} there exists a unique bounded classical solution $p \in C^{1,2}_{\exp}([0,T]\times\R^\dode)$ to
  the PDE and a backward Feynman-Kac representation holds.
  Then it is in particular integrable and hence defines a measure-valued
  function $\mu$ on $[0,T]$ which satisfies \eqref{eq:measureEvolution}.
  By uniqueness for this equation it coincides with $\rho$.
\end{proof}

When replacing $W$ by a rough path $\mathbf{W}$, we are interested in the following equation
\begin{equation}
  \label{eq:forwardRPDE}
  \begin{cases}
    d \rho_t &= L^* \rho_t dt + \Gamma^*_k \rho_t d\mathbf{W}^k_t \\
    \rho_0 &= \nu.
  \end{cases}
\end{equation}

Two ways to make sense of this equation are given in the following definitions.

\begin{definition}[{\bf Measure valued forward RPDE solution}]
  \label{def:forwardSolutionRough}
  Given an $\alpha$-H\"older rough path $\mathbf{W} = (W,\mathbb{W})$, $\alpha \in (1/3,1/2]$,
  and $\nu \in \mathcal{M}(\R^\dode)$,
  we say that a continuous finite-measure-valued path $\rho_t$ is a weak solution to \eqref{eq:forwardRPDE}
  if for all 
  $f \in C^3_b(\R^\dode)$,
  $\rho_{t} \left( \Gamma_k f  \right)_{k=1,\dots,\drp}$
  is controlled by $W$ with Gubinelli derivative $\rho_t( \Gamma_j \Gamma_k f )_{k,j=1,\dots,\drp}$,
  that is
  \begin{align}
    \label{eq:forwardSolutionRoughControlled}
    || \rho_{\cdot} \left( \Gamma_k f  \right), \rho_\cdot( \Gamma_j \Gamma_k f ) ||_{W,\alpha} < \infty,
  \end{align}
  and the integral equation
  \begin{align}
    \label{eq:roughForwardEquation}  
    \rho_t(f) = \nu(f) + \int_0^t \rho_s( L f ) ds + \int_0^t \rho_s( \Gamma_k f ) d\mathbf{\sp}^k_s,
  \end{align}
  holds.
\end{definition}

\begin{definition}[{\bf Forward RPDE solution}]
  \label{def:strongForwardSolutionRough}
  Given an $\alpha$-H\"older rough path $\mathbf{W} = (W,\mathbb{W})$, $\alpha \in (1/3,1/2]$,
  and $\phi \in C^2_b(\R^\dode)$
  we say that $p \in C^{0,2}_b([0,T]\times \R^\dode)$ is a regular solution to
  \begin{align}
    \label{eq:strongForwardRough}
    \begin{cases}
    d p_t &= L^* p_t dt + \Gamma^*_k p_t d\mathbf{W}^k_t \\
      p_0 &= \phi,
    \end{cases}
  \end{align}
  if $\Gamma^*_k p$ is controlled by $W$ with
  Gubinelli derivative $\Gamma^*_j \Gamma^*_k p$ and the integral equation
  \begin{align*}
    p_t = \phi + \int_0^t L^* p_s ds + \int_0^t \Gamma^*_k p_s d\mathbf{W}^k_t,
  \end{align*}
  holds.
\end{definition}

\begin{theorem}
  \label{thm:roughForwardEquation}
  Throughout, $\mathbf{W}$ is a geometric $\alpha$-H\"older rough path, $\alpha\in (1/3,1/2]$.
  Assume $\sigma_i, \beta_j \in \Lip^{3}(\R^\dode), b \in \Lip^1(\R^\dode), c \in \Lip^1(\R^\dode)$,
  $\gamma_k \in \Lip^2(\R^\dode)$.
  Let $\nu$ be a finite measure.
  
  \begin{enumerate}[(i)]
    \item {\bf Stability.}
    Let $\rho = \rho^W$ be the solution to \eqref{eq:measureEvolution}
    as given by the Feynman-Kac representation \eqref{eq:feynmanKacRho}, whenever $W\in C^{1}$.
    Pick $W^\epsilon \in C^1$ convergent 
    in rough path sense to $\mathbf{W}$. Then there exists a continuous finite-measure-valued function $\rho^\mathbf{W}$, 
    independent of the choice of the approximating sequence, so that $d( \rho^{W^\epsilon},\rho^\mathbf{W}) \to 0$.
    The resulting map $\mathbf{W} \mapsto \rho^\mathbf{W}$ is continuous.
    Moreover, the following Feynman-Kac representation holds for $f \in C_b(\R^\dode)$
    \begin{align*}
      \rho^{\mathbf{W}}_t( f ) :=
        \E^{0,\rho_0}\left[ f\left( X_{t}\right) \exp \left( \int_{0}^{t}c\left( X_{s}\right) ds+\int_{0}^{t}\gamma \left( X_{s}\right) d\mathbf{W}_s \right) \right],
    \end{align*}
    where $X$ solves the same rough SDE as in Theorem \ref{thm:roughBackwardEquation}.

    \item {\bf Weak RPDE solution.}
    The measure-valued path
    $\rho^{\mathbf{W}}$ constructed in part (i)
    is a solution to \eqref{eq:forwardRPDE} in the sense of Definition \ref{def:forwardSolutionRough}.
    Moreover, \eqref{eq:forwardSolutionRoughControlled} is
    bounded, uniformly over bounded sets of $f$ in $C^3_b(\R^\dode)$.
    If the coefficients satisfy the stronger conditions of Theorem \ref{thm:roughBackwardEquation} (iii)
    then 
    $\rho^{\mathbf{W}}$ is the only solution in the class of measure-valued functions $\rho$ satisfying this uniform bound on \eqref{eq:forwardSolutionRoughControlled}.

    \item {\bf Regular RPDE solution.}
    Assume $\sigma_i \in \Lip^6(\R^\dode), \beta_j \in \Lip^7(\R^\dode), b \in \Lip^{5}(\R^\dode),
    \gamma_k \in \Lip^6(\R^\dode), c \in \Lip^4(\R^\dode)$.
    If $\rho_0$ has a density $p_0 \in C^4_{\exp}(\R^\dode)$, then $\rho_t$ has a density $p_t$
    for all times, and $p \in C^{0,4}_{\exp}([0,T]\times\R^d)$
    is a solution to \eqref{eq:strongForwardRough}
    in the sense of Definition \ref{def:strongForwardSolutionRough}.
    It is the only solution that in addition satisfies for some $\delta > 0$
    \begin{align*}
      || \Gamma^* u(\cdot,x), \Gamma^* \Gamma^* u(\cdot,x) ||_{W,\alpha} \lesssim e^{-\delta|x|}.
    \end{align*}
  \end{enumerate}
\end{theorem}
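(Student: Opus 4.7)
The plan is to invoke stability of the joint rough SDE from Lemma \ref{lem:roughSDE}, which gives continuous dependence of the process $X^{\mathbf{W}}$ on $\mathbf{W}$ together with uniform moment bounds along any rough-path-convergent approximation $W^\epsilon \to \mathbf{W}$. Combined with continuity of the rough integral $\int_0^t \gamma(X_s)\, d\mathbf{W}_s$ as a functional of $\mathbf{W}$, this yields convergence of the Feynman--Kac expression against every $f \in C_b(\R^\dode)$ (the exponential factor is uniformly bounded since $c,\gamma$ are bounded). The limit is a finite measure by dominated convergence. To upgrade pointwise convergence against $C_b$ test functions to convergence in the Kantorovich--Rubinstein metric, I would combine it with tightness of $\{\rho^{W^\epsilon}_t\}_{\epsilon,t\le T}$, itself a consequence of the uniform moment bounds on $X^{W^\epsilon}$ supplied by Lemma \ref{lem:roughSDE}.

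\textbf{Part (ii): existence.} I would mirror the backward argument of Theorem \ref{thm:roughBackwardEquation}~(ii). Writing $\rho_t(\Gamma_k f) = \E^{0,\nu}[\Gamma_k f(\Phi_{0,t}(\cdot))\exp(\cdots)]$ in terms of the flow $\Phi$ of the joint rough SDE, one expands the increment $\rho_t(\Gamma_k f) - \rho_s(\Gamma_k f) - \rho_s(\Gamma_j \Gamma_k f)\, W^j_{s,t}$ by Taylor-expanding $\Phi$ to first order in the rough driver; the candidate Gubinelli derivative absorbs the first-order term and the remainder is $O(|t-s|^{2\alpha})$ uniformly over bounded sets of $f \subset C^3_b(\R^\dode)$. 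Because we integrate against a \emph{finite} measure $\nu$, no exponential-decay trick on $f$ is needed. To obtain the integral equation \eqref{eq:roughForwardEquation}, I would approximate $\mathbf{W}$ by smooth $W^n$, use Lemma \ref{lem:forwardEquationSmooth}~(i) for each $\rho^{W^n}$, and pass to the limit: convergence of the Riemann term comes from part (i), and convergence of the rough integral follows from Theorem 9.1 of \cite{FH14} once uniform bounds and uniform convergence of the controlled-path data are checked, both of which are inherited from the flow estimate.

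\textbf{Part (ii): uniqueness, and Part (iii).} Given two weak solutions $\rho, \bar\rho$ with uniform control bounds and a test function $\varphi \in C^4_b(\R^\dode)$, I would invoke Theorem \ref{thm:roughBackwardEquation}~(iii) under its stronger hypotheses (precisely the assumption of the uniqueness claim) to build a regular backward solution $u$ on $[0,t]$ with terminal data $\varphi$. A measure-valued version of the product rule (Lemma \ref{lem:productRuleWeak}) then shows $\rho_s(u_s) \equiv \rho_0(u_0)$: the $Ldt$ pairings cancel, and the rough pairings cancel because both controlled-path factors have matching Gubinelli derivatives $\rho_s(\Gamma_j\Gamma_k u_s)$. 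This gives $\rho_t(\varphi)=\bar\rho_t(\varphi)$, and bounded pointwise approximation extends this from $C^4_b$ to $C_b$. For part (iii), I would view the forward regular RPDE as a backward equation in reversed time for the dual coefficients \eqref{eq:adjointCoefficients}; the extra regularity demanded on $\beta_j$, $b$, etc., exactly compensates for the divergence terms in $\tilde b$ and $\tilde \gamma_k$, so that the dual problem falls within the scope of the exponential-decay variant of Theorem \ref{thm:roughBackwardEquation}~(iii). This produces $p \in C^{0,4}_{\exp}([0,T]\times\R^\dode)$ solving \eqref{eq:strongForwardRough}. Identification of $p_t$ as the density of $\rho^{\mathbf{W}}_t$ proceeds by passing to the rough limit in the smooth identity $\int p^n_t f\, dx = \rho^{W^n}_t(f)$, with part (i) controlling the right-hand side and uniform exponential decay justifying dominated convergence on the left. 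Uniqueness of the regular solution is dual to the weak-backward uniqueness from Theorem \ref{thm:roughBackwardEquation}~(ii), the exponential-decay assumption being precisely what allows the test-function calculus to close.

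\textbf{Main obstacle.} The most delicate step is propagating the controlled-path structure \emph{uniformly} in the spatial variable through the smooth-to-rough approximation: one needs $(Y^n,(Y^n)')$ bounded uniformly in $n$ in the appropriate $\alpha$-H\"older/controlled norms and converging in supremum norm to $(Y,Y')$. This forces careful bookkeeping of flow-derivative estimates of the type used in the backward proof (cf.\ the role of Lemma \ref{lem:flowWithDeterminant} there), and it is what motivates the generous smoothness hypotheses on $\sigma, \beta, b, \gamma$ in part (iii); reducing these would likely be the principal avenue for sharpening the result.
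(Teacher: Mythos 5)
Your proposal is correct and follows the paper's overall architecture: Feynman--Kac and rough-SDE stability for (i), a controlled-path computation for the existence part of (ii), duality with a regular backward solution via a product rule for the uniqueness part of (ii), and dualizing to the backward problem (adjoint coefficients) for (iii). The one place where the paper's route is lighter than what you sketch is the existence proof in (ii): you propose to mirror the backward argument, Taylor-expanding the flow $\Phi_{0,t}$ and tracking the resulting increments, but the paper does \emph{not} replicate the inverse-flow/Jacobian-determinant machinery used in the backward proof. Instead it applies the rough chain rule directly to $f(X_t)$, takes expectation, and invokes Lemma \ref{lem:expectationControlII} — a dedicated ``expectation of a controlled path'' lemma — to both certify that $t\mapsto\rho_t(\Gamma_k f)$ is controlled with the right Gubinelli derivative (uniformly over bounded sets of $f$ in $C^3_b$) and to pass to the integral equation via smooth approximation. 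The forward direction avoids the change-of-variables entirely, which is why no exponential decay on $f$ is needed there; your observation that this is because one integrates against a finite measure is correct, but the structural simplification goes further than that. One small slip: for the forward uniqueness argument the relevant product rule is Lemma \ref{lem:productRuleMeasureValued}, not Lemma \ref{lem:productRuleWeak} (the latter is the weak-backward version with $L^1$ pairings and exponential-decay test functions). Finally, for uniqueness in (iii) the paper's logic is simply that the regular forward RPDE \emph{is} a regular backward RPDE for the adjoint coefficients, so uniqueness is inherited from Theorem \ref{thm:roughBackwardEquation}(iii); phrasing this as ``dual to the weak-backward uniqueness from (ii)'' is not quite how the paper closes the circle.
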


\begin{proof}
  (i):
  First of all we note that for fixed $f \in C_b(\R^{\dode})$ and fixed $t$ we have that 
  \begin{align*}
    \sp \mapsto \rho^{\sp}_t( f )
  \end{align*}
  is continuous in rough path topology.
  Indeed, this follows from Lemma \ref{lem:roughSDE} and is also seen to hold uniformly in $t$ and in bounded sets of $f$ in $C_b(\R^\dode)$.
  This also immediately gives the stated Feynman-Kac representation.

  (ii):
  Fix $f \in C^3_b(\R^\dode)$ and
  for simplicity take $b=\gamma=c=0$.
  Then note that
  \begin{align*}
    f(X_t)
    =  
    f(X_s)
    +
    \int_s^t L f(X_r) dr
    +
    \int_s^t \langle \sigma_i(X_r), Df(X_r) \rangle dB^i_r
    +
    \int_s^t \Gamma_i f(X_r) d\mathbf{\sp}^i_r.
  \end{align*}
  Taking expectation and applying Lemma \ref{lem:expectationControlII} we get
  \begin{align*}
    \rho_t(f) = \rho_s(f) 
    +
    \int_s^t \rho_r( L f ) dr
    +
    \int_s^t \rho_r( \Gamma_i f ) d\mathbf{W}^i_r,
  \end{align*}
  as well as the desired uniform bound on \eqref{eq:forwardSolutionRoughControlled}.

  To show uniqueness in part (ii),
  let $\phi \in C^{0,3}_b([0,t],\R^\dode)$ be 
  such that 
  \begin{align*}
    \phi(s,x) = \varphi(x) + \int_s^t \alpha(r,x) dr + \int_s^t \eta_i(r,x) d\mathbf{W}^i_r,
  \end{align*}
  for some $(\eta_{i=1,\dots,\drp},\eta'_{i,j=1,\dots,\drp})$ controlled by $W$,
  uniformly over $x$, i.e.
  \begin{align*}
    \sup_x \left[ ||\eta(x),\eta'(x)||_{W,\alpha} \right] &< \infty, \\
    \sup_x \left[ ||\Gamma_i \phi(x),\Gamma_i \eta(x)||_{W,\alpha} \right] &< \infty, \quad i=1,\dots,\drp.
  \end{align*}
  Moreover assume that $\eta \in C^{0,3}_b( [0,T] \times \R^\dode )$.
  Then by Lemma \ref{lem:productRuleMeasureValued}
  \begin{align*}
    \rho_t( \phi_t )
    =
    \rho_0( \phi_0 )
    +
    \int_0^t \rho_r( L \phi_r ) dr
    +
    \int_0^t \rho_r( \Gamma_k \phi_r ) d\mathbf{W}^k_r
    -
    \int_0^t \rho_r( \alpha(r) )dr
    -
    \int_0^t \rho_r( \eta_k(r) ) d\mathbf{W}^k_r.
  \end{align*}

  So it remains to find, for given $\varphi$, such a $\phi$
  with $\alpha(r) = L \phi(r)$,
  $\eta_i(r) = \Gamma_i \phi(r)$
  and $\eta'_{i,j}(r) = \Gamma_j \Gamma_i \phi(r)$.
  But this is exactly what Theorem \ref{thm:roughBackwardEquation} (iii) gives us for
  %
  $\varphi \in C^4_b(\R^\dode)$.
  Then
  \begin{align*}
    \rho_t( \varphi ) = \rho_t( \phi_t ) = \rho_0( \phi_0 ),
  \end{align*}
  which gives uniqueness of $\rho$.

  (iii):
  The coefficients of the adjoint equation are given
  in \eqref{eq:adjointCoefficients}.
  In particular
  $\tilde \sigma_i \in \Lip^6(\R^\dode), \tilde \beta_j \in \Lip^6(\R^\dode), \tilde b \in \Lip^{4}(\R^\dode),
   \tilde c \in \Lip^4(\R^\dode), \tilde \gamma_k \in \Lip^6(\R^\dode)$.
  Hence the adjoint equation fits into the setting of 
  Theorem \ref{thm:roughBackwardEquation} (iii).
  In particular there exists a
  $C^{0,4}_b$ solution to \eqref{eq:strongForwardRough}
  and we can represent it as
  \begin{align*}
    p_t(x) = \E\left[ p_0( \tilde X^{T-t,x,\mathbf{W}}_T ) \exp\left( \int_{T-t}^T \tilde c( X^{T-t,x,\mathbf{W}}_r ) dr +  \int_{T-t}^T \tilde \gamma 
     (X^{T-t,x,\mathbf{W}}_r ) d\mathbf{W}_r \right) \right] ,
  \end{align*}
  for a rough SDE $\tilde X$.
  The exponential estimates on the control then follow by a similar argument
  as in the proof of Theorem \ref{thm:roughBackwardEquation} (iii),
  using Lemma \ref{lem:expDecaySurvivesFeynmanKacII} on the integrands $Dg, D^2 g$.
  Finally, $p_t$ is the density of $\rho_t$ of part (ii) due to the following reason: 
  $p_t$ is integrable because of the exponential decay, the corresponding measure satisfies \eqref{eq:strongForwardRough}, which by uniqueness for that equation then coincides with $\rho$. 
\end{proof}

The following lemma was needed in the previous proof.
\begin{lemma}
  \label{lem:expectationControlII}
  Assume $\sigma_i, \beta_j \in \Lip^3(\R^\dode)$, $b \in \Lip^1(\R^\dode)$.
  Let $\mathbf{Z}$ be the joint lift of a Brownian motion $B$ with a (deterministic) geometric $\alpha$-H\"older rough path $\mathbf{\sp}$, $\alpha \in (1/3,1/2]$
  (see Lemma \ref{lem:jointLift}).
  Let $X$ be the random RDE solution to
  \begin{align*}
    X_t
    &\ =\  X_0 + \int_0^t b(X_r) dr + \int_0^t \left(\sigma, \beta \right) d\mathbf{Z} \\
    &"=" X_0
    + \int_0^t b(X_r) dr 
    + \int_0^t \sigma_i(X_r) dB^i_r
    + \int_0^t \beta_i(X_r) d\mathbf{\sp}^i_r.
  \end{align*}
  Let $f \in C^3_b(\R^\dode)$ and define
  \begin{align*}
    (Y_t)_i &:= \E[ \beta_i^z(X_t) \partial_k f(X_t) ] \\
    (Y'_t)_{i,j} &:= \E[ \partial_k[ \partial_z f \beta_i^z ](X_r) \beta^k_j(X_r) ].
  \end{align*}

  Then $(Y,Y') \in \mathcal{D}_\sp^{2 \alpha}$ and
  \begin{align}
    \label{eq:RDEExpectation}
    \E[ f(X_t) ] = \E[ f(X_0) ]
    + \int_0^t \E[ bf (X_r) ] dr 
    + \int_0^t (Y,Y') d\mathbf{\sp}^i_r.
  \end{align}

  Moreover for all $R > 0$,
  \begin{align*}
    \sup_{||f||_{C^3_b} \le R} ||(Y,Y')||_{W,\alpha} < \infty. 
  \end{align*}
\end{lemma}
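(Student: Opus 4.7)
The plan is to apply an Itô--rough Itô formula to $f(X_t)$ and $\Gamma_i f(X_t)$, take expectations, and verify both the controlled-path structure of $(Y,Y')$ and the integral equation. The key inputs are the hybrid chain rule for the rough SDE (Lemma \ref{lem:roughSDE}), the standard local estimate for rough integrals, and the $L^p$-moment bounds on $X$ and on the controlled-path norms of smooth functions of $X$ coming from rough SDE theory. Here I implicitly set $c=\gamma=0$ and write $\Gamma_i f := \langle \beta_i, Df\rangle$, $Lf := \tfrac{1}{2}\mathrm{Tr}(\sigma\sigma^T D^2 f) + \langle b, Df\rangle$, so that $Y = \E[\Gamma_i f(X)]$ and $Y' = \E[\Gamma_j\Gamma_i f(X)]$.

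The first step is to show $(Y,Y') \in \mathcal{D}^{2\alpha}_W$ together with the uniform $C^3_b$ bound. Since $\Gamma_i f \in C^2_b$ when $f \in C^3_b$, the hybrid Itô formula on $[s,t]$ gives
\[
\Gamma_i f(X_t) - \Gamma_i f(X_s) = \int_s^t L(\Gamma_i f)(X_r)\, dr + M^i_{s,t} + \int_s^t \Gamma_j\Gamma_i f(X_r)\, d\mathbf{W}^j_r,
\]
where $M^i$ is a Brownian martingale. Taking $\E$ kills $M^i$; the Lebesgue term is $O(|t-s|)$; and the local rough integral estimate $\int_s^t Z\, d\mathbf{W} = Z_s W_{s,t} + Z'_s \mathbb{W}_{s,t} + O(|t-s|^{3\alpha})$, applied with $Z = \Gamma_j\Gamma_i f(X)$ controlled by the joint lift $\mathbf{Z}$, yields $Y_t - Y_s = Y'_s W_{s,t} + O(|t-s|^{2\alpha})$. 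Likewise $|Y'_t - Y'_s| \lesssim \E[|X_t - X_s|] \lesssim |t-s|^\alpha$ via the Hölder continuity of $X$. All prefactors are polynomial in $||f||_{C^3_b}$ and in $\mathbf{W}$-norms, simultaneously giving $(Y,Y') \in \mathcal{D}^{2\alpha}_W$ and the uniform bound over balls in $C^3_b$.

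Next I would derive \eqref{eq:RDEExpectation}. The Itô--rough Itô formula applied directly to $f(X_t)$ produces
\[
f(X_t) = f(X_0) + \int_0^t L f(X_r)\, dr + N_t + \int_0^t \Gamma_i f(X_r)\, d\mathbf{W}^i_r,
\]
with $N$ a Brownian martingale. Taking expectation, the martingale drops out, the Lebesgue part becomes $\int_0^t \E[Lf(X_r)]\, dr$ by Fubini (this is the meaning of the ``$\E[bf]$'' shorthand of the statement), and the task reduces to identifying $\E\bigl[\int_0^t \Gamma_i f(X_r)\, d\mathbf{W}^i_r\bigr]$ with $\int_0^t (Y,Y')\, d\mathbf{W}^i_r$. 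This interchange of expectation and rough integration is the main obstacle. My approach is to write the pathwise rough integral as the $|P|\to 0$ limit of compensated Riemann sums $S_P := \sum_{[u,v] \in P}\bigl[\Gamma_i f(X_u) W^i_{u,v} + \Gamma_j\Gamma_i f(X_u) \mathbb{W}^{j,i}_{u,v}\bigr]$. Linearity of $\E$ together with determinism of $(W,\mathbb{W})$ gives $\E S_P = \sum_{[u,v]}[Y_u W_{u,v} + Y'_u \mathbb{W}_{u,v}] \to \int_0^t (Y,Y')\, d\mathbf{W}$. In the other direction, the standard remainder bound $\bigl|\int_0^t \Gamma_i f(X)\, d\mathbf{W} - S_P\bigr| \le C(\omega)\, |P|^{3\alpha - 1}$ has $C(\omega)$ polynomial in $||(\Gamma_i f(X), \Gamma_j\Gamma_i f(X))||_{\mathbf{Z},\alpha}$ and $||\mathbf{W}||_\alpha$. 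By the $L^p$-moment bounds of Lemma \ref{lem:roughSDE} this constant has finite expectation, so dominated convergence yields $\E S_P \to \E\int_0^t \Gamma_i f(X)\, d\mathbf{W}$, completing the identification and hence the lemma.
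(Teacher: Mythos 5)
Your proof is correct in spirit but takes a genuinely different route from the paper, and hinges on a piece of machinery the paper is careful not to use. To establish the controlled-path structure of $(Y,Y')$, the paper does not apply any It\^o formula: it directly invokes the Davie expansion $X_{s,t} = V(X_s)Z_{s,t} + O(|t-s|^{2\alpha})$ from Lemma \ref{lem:boundedRDEs}, composes with the smooth function $g_i = \Gamma_i f$ via Lemma \ref{lem:applicationOfSmoothFunction}, and then takes expectations --- the Brownian block drops out because $B_{s,t}$ is centered and independent of $X_s$, the $W$-block is deterministic, and the random remainder becomes a genuine $O(|t-s|^{2\alpha})$ remainder after integrating the moment bounds of Lemma \ref{lem:jointLift}. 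This is purely pathwise plus one application of $\E$, and no It\^o correction ever needs to be identified. You instead invoke a ``hybrid It\^o--rough It\^o formula'' for $\Gamma_i f(X)$ and $f(X)$, which is not stated or proved anywhere in the paper's appendix; the paper does use such a formula in the proof of Theorem \ref{thm:roughForwardEquation}(ii), but \emph{after} this lemma has already been established, and the lemma's own proof is designed not to rely on it (avoiding a potential circularity). Your use of it is morally sound --- it is the It\^o formula for controlled paths with the non-geometric Brownian bracket --- but you should either cite a precise reference for it or derive it, since it is not a free import. For the integral identity \eqref{eq:RDEExpectation}, your Riemann-sum interchange with dominated convergence is a perfectly good and arguably more self-contained argument; the paper instead proves it for smooth $W$ by Fubini and then passes to the rough limit by the smooth-approximation scheme already set up in the proof of Theorem \ref{thm:roughBackwardEquation}. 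What your route buys is a conceptually transparent picture (It\^o formula, take $\E$, swap $\E$ with the rough integral); what the paper's route buys is minimal machinery --- only RDE a priori estimates and moment bounds, no It\^o formula for the hybrid process.
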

\begin{proof}
  For simplicity take $b=0$.
  First
  \begin{align*}
    X_{s,t} = \sigma_i(X_s) B^i_{s,t} + \beta_i(X_s) W^i_{s,t} + R_{s,t},
  \end{align*}
  where $||R||_{2\alpha} \le C \left( 1 + ||\mathbf{Z}||_{\alpha} \right)^3$ 
  (see Lemma \ref{lem:boundedRDEs}).

  Then,
  with $g_i := \beta^z_i \partial_z f$
  \begin{align*}
    g_i(X)_{s,t}
    =
    \sigma_j^k(X_s) \partial_k g_i(X_s) B^j_{s,t}
    +
    \beta_j^k(X_s) \partial_k g_i(X_s) W^j_{s,t}
    +
    R_{s,t},
  \end{align*}
  with $||\beta_j^k(X_s) \partial_k g(X_s)||_{\alpha} + ||R||_{2\alpha} \le C \left( 1 + ||\mathbf{Z}||_{\alpha} \right)^{3}$
  (see Lemma \ref{lem:applicationOfSmoothFunction}).

  Taking expectation and using integrability of $\mathbf{Z}$ (Lemma \ref{lem:jointLift}), we get
  \begin{align*}
    Y_{s,t} = Y'_s W_{s,t} + \bar R_{s,t},
  \end{align*}
  with $||Y'||_{\alpha} + ||\bar R||_{2\alpha} \le C < \infty$, as desired.

  Now for $W$ smooth, equation \eqref{eq:RDEExpectation} is satisfied by Fubini's theorem.
  Showing it for $\mathbf{W}$ a geometric rough path
  then follows via smooth approximations. This has for example already been
  done - in a similar setting - in the proof of Theorem \ref{thm:roughBackwardEquation}, so we omit the details here.
\end{proof}

The following result in the proof of the previous theorem is worth to be formulated separately.

\begin{corollary}[Duality]
\label{cor:Duality} 
  Assume the conditions of Theorem \ref{thm:roughBackwardEquation} (iii).
  Let $u$ be the unique solution to the backward equation \eqref{eq:defAWRPDEsol}
  and $\rho$ be the unique solution to the forward equation \eqref{eq:roughForwardEquation}.
  Then
  \begin{align*}
    \rho_t( u_t ) = \rho_0( u_0 ) \qquad \forall t \in [0,T].
  \end{align*}
\end{corollary}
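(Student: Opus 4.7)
The plan is to apply the measure-valued product rule (Lemma~\ref{lem:productRuleMeasureValued})---the same tool that already supplied uniqueness in the proof of Theorem~\ref{thm:roughForwardEquation}~(ii)---this time with the time-dependent test function $\phi_r := u_r$. Under the hypotheses of Theorem~\ref{thm:roughBackwardEquation}~(iii), the regular backward solution lies in $C^{0,4}_b([0,T]\times\R^\dode)$ and $(\Gamma_k u, \Gamma_j \Gamma_k u)$ is controlled by $W$ with the uniform-in-$x$ bound \eqref{eq:roughBackwardEquationUniformControl}, so $u$ qualifies as a legitimate test function for the lemma.

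Reading off the integrated form of the backward equation on an arbitrary subinterval $[s,t] \subset [0,T]$, one has
\begin{align*}
u_s = u_t + \int_s^t L u_r \, dr + \int_s^t \Gamma_k u_r \, d\mathbf{W}^k_r,
\end{align*}
so the coefficients to plug into the lemma are $\alpha(r) = L u_r$, $\eta_k(r) = \Gamma_k u_r$, with Gubinelli derivative $\eta'_{j,k}(r) = \Gamma_j \Gamma_k u_r$. The lemma then produces an identity for $\rho_t(u_t)$ in which the drift terms $\int_0^t \rho_r(L u_r)\,dr$ coming from evolving $\rho$ cancel exactly those coming from evolving $u$, and likewise the two rough integrals $\int_0^t \rho_r(\Gamma_k u_r)\,d\mathbf{W}^k_r$ cancel pairwise. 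What remains is precisely $\rho_0(u_0)$, proving the claim.

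The only point requiring genuine care---and hence the main (modest) obstacle---is matching the regularity hypotheses of Lemma~\ref{lem:productRuleMeasureValued} against what is delivered by Theorem~\ref{thm:roughBackwardEquation}~(iii): specifically, one has to check that the controlledness assumption on $(\eta, \eta')$ together with the $C^{0,3}_b$ bounds on $\eta$ that the lemma requires are indeed implied by $u \in C^{0,4}_b$ and the uniform bound \eqref{eq:roughBackwardEquationUniformControl}. This verification is essentially identical to the bookkeeping step already performed in the uniqueness argument of Theorem~\ref{thm:roughForwardEquation}~(ii), so the corollary follows with no further work.
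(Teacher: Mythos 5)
Your proposal is correct and follows the same route the paper takes: the corollary is exactly the identity that appears in the uniqueness argument for Theorem~\ref{thm:roughForwardEquation}~(ii), obtained by feeding the regular backward solution $\phi = u$ (available under the hypotheses of Theorem~\ref{thm:roughBackwardEquation}~(iii)) into the product rule Lemma~\ref{lem:productRuleMeasureValued}, after which the Lebesgue and rough-integral terms from evolving $\rho$ and $u$ cancel, leaving $\rho_t(u_t)=\rho_0(u_0)$. The regularity bookkeeping you flag ($u\in C^{0,4}_b$ plus the uniform control \eqref{eq:roughBackwardEquationUniformControl} supplying the hypotheses on $\eta=\Gamma u$ and its derivative) is indeed the same check the paper performs when it invokes Theorem~\ref{thm:roughBackwardEquation}~(iii) there, so nothing new is required.
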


\section{Appendix}

\newcommand{\arp}{X} 

\subsection{Rough differential equations}

We recall the space of controlled paths.
\begin{definition}
  Let $\arp$ be an $\alpha$-H\"older continuous path, $\alpha \in (1/3,1/2]$.
  A path $Y \in C^\alpha$ is controlled by $\arp$ with derivative $Y' \in C^\alpha$
  (in short $(Y,Y') \in \mathcal{D}_W^{2\alpha}$),
  if
  \begin{align*}
    Y_{s,t} = Y'_s \arp_{s,t} + R^Y_{s,t},
  \end{align*}
  with $R^Y = O(|t-s|^{2\alpha})$. We use the following semi-norm on $\mathcal{D}_W^{2\alpha}$,
  \begin{align*}
    ||Y,Y'||_{\arp,\alpha} := ||Y'||_\alpha + ||R^Y||_{2 \alpha}.
  \end{align*}
\end{definition}

\begin{definition}
  Let $\omega $ be a control function (see Definition 1.6 in \cite{friz-victoir-book}). For $a >0$ and $\left[ s,t\right] \subset %
  \left[ 0,T \right] $ we set%
  \begin{eqnarray*}
  \tau _{0}\left( a \right) &=&s \\
  \tau _{i+1}\left( a \right) &=&\inf \left\{ u:\omega \left( \tau
  _{i},u\right) \geq a ,\tau _{i}\left( a \right) <u\leq t\right\}
  \wedge t
  \end{eqnarray*}%
  and define%
  \begin{equation*}
  N_{a ,\left[ s,t\right] }\left( \omega \right) =\sup \left\{ n\in 
  \mathbb{N\cup }\left\{ 0\right\} :\tau _{n}\left( a \right) <t\right\} .
  \end{equation*}%
  When $\omega $ arises from a (homogeneous) $p$-variation norm of a ($p$%
  -rough) path, such as $\omega _{\mathbf{\arp}}=\left\Vert \mathbf{\arp}\right\Vert
  _{p\text{-var;}\left[ \cdot ,\cdot \right] }^{p}$, we shall also write%
  \begin{equation*}
  N_{a ,\left[ s,t\right] }\left( \mathbf{\arp}\right) :=N_{a ,\left[
  s,t\right] }\left( \omega _{\mathbf{\arp}}\right).
  \end{equation*}
\end{definition}

\begin{remark}
  The importance of $N_{a;[0,T]}(\mathbf{\arp})$ stems from the fact that it has - contrary to the $p$-variation norm $||\mathbf{\arp}||_{p-var}$ - Gaussian
  integrability if $\mathbf{\arp} = \mathbf{B}$, the lift of Brownian motion (see \cite{CLL13,bib:frizRiedel}),
  or if $\mathbf{\arp} = \mathbf{Z}$, the joint lift of Brownian motion and a deterministic
  rough path used in the proof of Theorem \ref{thm:roughBackwardEquation}
  (see Lemma \ref{lem:jointLift} (iii)).
\end{remark}

\begin{lemma}[Bounded vector fields]
  \label{lem:boundedRDEs}
  
  Let $\mathbf{X}$ be a geometric $\alpha$-H\"older rough path, $\alpha \in (1/3,1/2]$.
  Let
  \begin{align*}
    dY = V(Y) d\mathbf{X},
  \end{align*}
  where $V=\left( V_{i}\right) _{i=1,\ldots ,\drp}$ is a collection of $\operatorname{Lip}^3(\R^\dode)$
  vector fields.

  Then with $(Y,Y') := (Y, V(Y))$ we have
  \begin{align}
    \label{eq:controlBoundBadVF}
    ||Y,Y'||_{X,\alpha} \le C \left( 1 + ||\mathbf{X}||_\alpha \right)^3.
  \end{align}

  Also
  \begin{align}
    \label{eq:infinityBoundBoundedVF}
    ||Y||_{1/\alpha-\mathrm{var}} \le C \left( 1 + N_{1;[0,T]}( \mathbf{X} ) \right).
  \end{align}
\end{lemma}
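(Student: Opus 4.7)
The plan is to combine local Davie/Gubinelli fixed-point estimates for RDEs with the boundedness of $V$ (which rules out blow-up) and a greedy partition of $[0,T]$ adapted to the rough-path control $\omega_{\mathbf{X}}(s,t) = \|\mathbf{X}\|_{\alpha;[s,t]}^{1/\alpha}(t-s)$. Both bounds are classical in rough path theory and the strategy is to first obtain a sharp local estimate and then paste along the greedy partition.

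First I would establish the local estimate: on a subinterval $[s,t] \subset [0,T]$ with $\omega_{\mathbf{X}}(s,t) \le a$ for some small universal $a = a(\|V\|_{\operatorname{Lip}^3})$, the controlled-path fixed-point argument produces a unique solution $(Y, V(Y)) \in \mathcal{D}_X^{2\alpha}$ satisfying
\begin{equation*}
  \|Y, V(Y)\|_{X, \alpha; [s,t]} \le C_V \left(1 + \|\mathbf{X}\|_{\alpha;[s,t]}\right)^{3}.
\end{equation*}
The cubic power arises from the third-order remainder $Y_{s,t} - V(Y_s) X_{s,t} - (DV\cdot V)(Y_s)\mathbb{X}_{s,t} = O(\|\mathbf{X}\|_\alpha^3 |t-s|^{3\alpha})$ produced by the sewing lemma applied to the controlled integrand. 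Since $V$ is bounded, the standard no-explosion argument extends the solution globally to $[0,T]$.

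For \eqref{eq:controlBoundBadVF}, partition $[0,T]$ using the greedy times $\tau_i = \tau_i(a)$, yielding at most $N_{a;[0,T]}(\mathbf{X}) + 1$ pieces on each of which the local estimate applies. Promoting the per-piece bounds to a global one on $[0,T]$ requires handling increments $Y_{s,t}$ and $R^Y_{s,t} = Y_{s,t} - V(Y_s)X_{s,t}$ with $s,t$ in different pieces; this is done by chaining through the intermediate $\tau_i$ and absorbing the cross-terms using the a priori uniform estimates $\|V(Y)\|_\infty \le \|V\|_\infty$ and $\|V(Y)\|_{\alpha;[u,v]} \le \|DV\|_\infty \|Y\|_{\alpha;[u,v]}$. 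The main bookkeeping obstacle is to check that the pasting preserves the power $3$ rather than inflating it; this works because at each junction the cross-term losses are of lower order than $\|\mathbf{X}\|_\alpha^3$ and can be absorbed into the constant.

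For \eqref{eq:infinityBoundBoundedVF}, take $a = 1$ and set $N := N_{1;[0,T]}(\mathbf{X})$. On each piece $[\tau_i,\tau_{i+1}]$ the condition $\omega_{\mathbf{X}}(\tau_i,\tau_{i+1}) \le 1$ together with the boundedness of $V$ gives a uniform local estimate $\|Y\|_{1/\alpha\text{-var};[\tau_i,\tau_{i+1}]} \le C$ independent of $i$. Superadditivity of the $p$-th power of $p$-variation with $p = 1/\alpha$ then yields
\begin{equation*}
  \|Y\|_{1/\alpha\text{-var};[0,T]}^{1/\alpha} \le \sum_{i=0}^{N} \|Y\|_{1/\alpha\text{-var};[\tau_i,\tau_{i+1}]}^{1/\alpha} \le C^{1/\alpha}(N + 1),
\end{equation*}
whence $\|Y\|_{1/\alpha\text{-var};[0,T]} \le C'(1+N)^{\alpha} \le C'(1+N)$, since $\alpha \le 1$. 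The principal verification left to the reader would be the explicit constants in the local Davie-Gubinelli step, which are standard but tedious for $\operatorname{Lip}^3$ bounded vector fields.
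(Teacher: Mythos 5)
The paper disposes of this lemma by citation alone: \eqref{eq:controlBoundBadVF} is exactly \cite[Proposition 8.3]{FH14}, and \eqref{eq:infinityBoundBoundedVF} is exactly \cite[Lemma 4, Corollary 3]{bib:frizRiedel}. You instead attempt a from-scratch derivation along the lines that those references use internally (a local Davie--Gubinelli estimate, followed by a greedy/Cass--Litterer--Lyons partition). This is a reasonable and more self-contained route, though necessarily heavier, and you are candid that the pasting step for \eqref{eq:controlBoundBadVF} is left as ``standard but tedious.'' That caveat is warranted: promoting a local $2\alpha$-H\"older bound on $R^Y$ across a greedy partition is not a triviality, since the na\"ive sum $\sum_i |\tau_{i+1}-\tau_i|^{2\alpha}$ over many short pieces can exceed $|t-s|^{2\alpha}$ when $\alpha<1/2$. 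Proposition 8.3 in \cite{FH14} handles this with a more careful a priori argument than a one-line ``cross terms are lower order''; your sketch would need to be fleshed out to the level of that proof.

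There is, however, one step in your argument for \eqref{eq:infinityBoundBoundedVF} that is actually incorrect as written. You invoke ``superadditivity of the $p$-th power of $p$-variation'' to claim
\begin{align*}
  \|Y\|_{p\text{-var};[0,T]}^p \le \sum_{i=0}^{N}\|Y\|_{p\text{-var};[\tau_i,\tau_{i+1}]}^p .
\end{align*}
Superadditivity of $[s,t]\mapsto\|Y\|_{p\text{-var};[s,t]}^p$ gives precisely the \emph{reverse} inequality, and the one you wrote is false in general (take $Y_t=t$, $p=2$, $[0,1]$ split into $N$ equal pieces: the left side is $1$, the right side is $1/N$). The repair is easy but different in kind: use subadditivity of the $p$-variation \emph{seminorm itself} across a split point, i.e.\ $\|Y\|_{p\text{-var};[s,t]}\le\|Y\|_{p\text{-var};[s,u]}+\|Y\|_{p\text{-var};[u,t]}$ (a Minkowski-in-$\ell^p$ argument on partitions), and iterate to get $\|Y\|_{p\text{-var};[0,T]}\le\sum_i\|Y\|_{p\text{-var};[\tau_i,\tau_{i+1}]}\le C(N+1)$. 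This yields exactly \eqref{eq:infinityBoundBoundedVF}, and is in substance what \cite{bib:frizRiedel} proves; so your conclusion is right, but the justification you give for the crucial pasting inequality needs to be replaced.
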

\begin{proof}
  \eqref{eq:controlBoundBadVF} follows from {\cite[Proposition 8.3]{FH14}} and
  \eqref{eq:infinityBoundBoundedVF} follows from \cite[Lemma 4, Corollary 3]{bib:frizRiedel}.
\end{proof}

\begin{lemma}[Linear vector fields]
  \label{lem:linearRDEs}
  Let $\mathbf{X}$ be a geometric $\alpha$-H\"older rough path, $\alpha \in (1/3,1/2]$.
  Let
  \begin{align*}
    dY = V(Y) d\mathbf{X},
  \end{align*}
  where $V=\left( V_{i}\right) _{i=1,\ldots ,\drp}$ is a collection of linear
  vector fields of the form $V_{i}\left( z\right) =A_{i}z+b_{i}$, where $A_{i}$ are
  $\dode \times \dode$ matrices and $b_{i}\in \mathbb{R}^{\dode}$.
  Then for $0\le s \le t \le T$:

  \begin{enumerate}
    \item
  \begin{align*}
    ||Y||_{p-var;[s,t]}
    \le
    C \left( 1 + |y_0| \right)
    ||\mathbf{X}||_{p-var;[s,t]}
    \exp( C N_{1;[0,T]}(\mathbf{X}) ).
  \end{align*}
  with $p:= 1/\alpha$, which implies
  \begin{align*}
    |Y|_{\alpha;[0,T]}
    \le
    C \left( 1 + |y_0| \right)
    ||\mathbf{X}||_{\alpha;[0,T]}
    \exp( C N_{1;[0,T]}(\mathbf{X}) ).
  \end{align*}

    \item
  \begin{align*}
    &|Y_{s,t} - V(Y_s) X_{s,t} - DV(Y_s) V(Y_s) \mathbb{X}_{s,}|
    \le
    C \exp( C N_1 ) ||\mathbf{X}||_{p-var;[s,t]}^3,
  \end{align*}
  which means that with $(Y,Y') := (Y, V(Y))$ we have
  \begin{align*}
    ||Y,Y'||_{X,\alpha} \le 
    C \exp( C N_1 ) \Bigl( ||\mathbf{X}||_{\alpha;[0,T]}^2 \vee ||\mathbf{X}||_{\alpha;[0,T]}^3 \Bigr).
  \end{align*}
  \end{enumerate}
\end{lemma}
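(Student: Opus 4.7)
The plan is to localize on intervals of unit $p$-variation (with $p=1/\alpha$), apply the standard a priori estimate for linear RDEs on each such small interval, and then patch, paying one multiplicative constant per greedy sub-interval so as to produce the factor $\exp(C N_{1;[0,T]}(\mathbf{X}))$ rather than the much larger $\exp(C\|\mathbf{X}\|_{p\text{-var}}^p)$ that a naive Gronwall over $[0,T]$ would give.

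Set $N := N_{1;[0,T]}(\mathbf{X})$ and let $0=\tau_0 < \tau_1 < \cdots < \tau_{N+1}=T$ be the greedy partition with $\|\mathbf{X}\|_{p\text{-var};[\tau_i,\tau_{i+1}]}^p \le 1$. On each $[\tau_i,\tau_{i+1}]$ I would invoke the classical small-interval RDE estimate (as in Friz--Victoir, or the local version underlying Lemma \ref{lem:boundedRDEs}), using that the affine vector field $V_k(z)=A_kz+b_k$, restricted to a ball of radius $O(1)$ around $Y_{\tau_i}$, has $C^3_b$-norm bounded by $C(1+|Y_{\tau_i}|)$. This yields
\begin{align*}
  |Y_{\tau_i,\tau_{i+1}}| + \|Y\|_{p\text{-var};[\tau_i,\tau_{i+1}]} \le C(1+|Y_{\tau_i}|)\,\|\mathbf{X}\|_{p\text{-var};[\tau_i,\tau_{i+1}]}.
\end{align*}
The first summand gives $1+|Y_{\tau_{i+1}}| \le (1+C)(1+|Y_{\tau_i}|)$, and iterating over the at most $N+1$ sub-intervals produces $\max_i(1+|Y_{\tau_i}|) \le (1+|y_0|)\exp(CN)$. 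Feeding this back into the second summand, and invoking superadditivity of the control $\omega(u,v):=\|\mathbf{X}\|_{p\text{-var};[u,v]}^p$, directly yields the $p$-variation bound of (1); the Hölder bound follows from $\|Y\|_{\alpha;[0,T]} \le \|Y\|_{p\text{-var};[0,T]}$ after absorbing $T^\alpha$ into the constant.

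For part (2), apply the Davie expansion on each greedy sub-interval with local $C^3_b$-bound $C(1+|Y_{\tau_i}|)$ to get
\begin{align*}
  |Y_{u,v} - V(Y_u)X_{u,v} - DV(Y_u)V(Y_u)\mathbb{X}_{u,v}| \le C(1+|Y_u|)\,\|\mathbf{X}\|_{p\text{-var};[u,v]}^3
\end{align*}
for $[u,v] \subset [\tau_i,\tau_{i+1}]$. To extend to arbitrary $[s,t]$ I would split at the $\tau_i$'s, sum the local expansions, and account for the telescoping corrections $V(Y_{\tau_j})-V(Y_s) = A(Y_{\tau_j}-Y_s)$ (and the analogous correction for $DV(Y)V(Y)$), which are controlled by the $p$-variation bound from (1). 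The prefactor $(1+|Y_{\cdot}|) \le (1+|y_0|)\exp(CN)$ together with the superadditive summation of $\|\mathbf{X}\|_{p\text{-var}}^3$ produces the stated $\exp(CN)$ constant. Translating to $\alpha$-Hölder via $\|\mathbf{X}\|_{p\text{-var};[s,t]} \le \|\mathbf{X}\|_{\alpha;[0,T]}(t-s)^\alpha$, and noting that on short versus long intervals the dominant power in $\|\mathbf{X}\|_\alpha$ is either $2$ or $3$, yields the controlled-path bound $\|Y,Y'\|_{X,\alpha} \le C\exp(CN)(\|\mathbf{X}\|_\alpha^2 \vee \|\mathbf{X}\|_\alpha^3)$.

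The only delicate point, and the main obstacle, is precisely the compounding of the local constants: without the greedy decomposition, and without pulling the affine growth $(1+|Y|)$ outside the Picard estimate as an overall multiplicative factor, one would pick up an $\exp(C\|\mathbf{X}\|_{p\text{-var}}^p)$ prefactor (insufficient for the Gaussian integrability invoked elsewhere in the paper when $\mathbf{X}$ is replaced by a Brownian lift). The remainder is routine patching along the lines of Friz--Victoir and the Cass--Litterer--Lyons estimate used in \cite{bib:frizRiedel}.
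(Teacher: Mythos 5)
Your overall strategy—greedy partitioning on unit $p$-variation intervals so that the exponential cost enters through $N_{1;[0,T]}(\mathbf{X})$ rather than $\|\mathbf{X}\|_{p\text{-var}}^p$—is exactly the idea underlying the paper's proof, and the compounding step $1+|Y_{\tau_{i+1}}| \le C(1+|Y_{\tau_i}|)$ is the same mechanism the paper iterates from its estimate \eqref{eq:gronwallFirstStep}. But there is a genuine gap in how you propose to get the local input.

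You claim that restricting the affine field to ``a ball of radius $O(1)$ around $Y_{\tau_i}$,'' with $C^3_b$-norm $\lesssim 1+|Y_{\tau_i}|$, and invoking the bounded-vector-field estimate (Lemma~\ref{lem:boundedRDEs}) on a greedy interval gives $|Y_{\tau_i,\tau_{i+1}}| \lesssim (1+|Y_{\tau_i}|)\|\mathbf{X}\|_{p\text{-var};[\tau_i,\tau_{i+1}]}$. This does not follow as stated. The bounded-VF estimate is \emph{linear} in $\|V\|_{C^3_b}$ only in the regime $\|V\|_{C^3_b}\,\omega(\tau_i,\tau_{i+1})^{1/p}\lesssim 1$; the greedy partition controls $\omega^{1/p}\le 1$, not the product. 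For large $|Y_{\tau_i}|$ you are outside the small-norm regime, the displacement of $Y$ over $[\tau_i,\tau_{i+1}]$ is itself $\propto(1+|Y_{\tau_i}|)$, so the path leaves the $O(1)$ ball and the localization is inconsistent. One correct way to rescue this is a scaling: set $\hat Y := Y/(1+|Y_{\tau_i}|)$, which on $[\tau_i,\tau_{i+1}]$ solves an affine RDE with uniformly bounded $C^3_b$-norm after truncation, apply the bounded-VF estimate there, and scale back. The paper avoids the issue entirely by starting from the ready-made \emph{linear}-RDE estimate of Friz--Victoir Theorem~10.53, which already has the form $|Y_{u,v}|\lesssim(1+|Y_u|)\|X\|_{p\text{-var};[u,v]}\exp(c\|X\|_{p\text{-var};[u,v]}^p)$; that is the reason the paper can go directly to the Gronwall-type iteration.

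For part (2), your interval-by-interval Davie expansion plus telescoping (with the Chen correction supplying the cancellation against the $DV(Y_s)V(Y_s)\mathbb{X}$ term) can be made to work, but it is substantially more delicate than the paper's argument, which uses the uniform bound $|Y|_\infty\le C(1+|y_0|)\exp(CN_1)$ from part~(1) to build a \emph{single global} $\operatorname{Lip}^3$ truncation $\tilde V$ coinciding with $V$ on a neighborhood of the whole trajectory, with $\|\tilde V\|_{\operatorname{Lip}^3}\lesssim(|Y|_\infty+2)\lesssim\exp(CN_1)$; the bounded-VF Davie estimate (Friz--Victoir Corollary~10.15) is then applied once. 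That removes both the telescoping bookkeeping and the dependence on a greedy decomposition in part~(2).
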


\begin{proof}
  1.
  In what follows $C$ is a constant that can change from line to line.

  From \cite[Theorem 10.53]{friz-victoir-book} we have for any $s \le u \le v \le t$:
  \begin{align*}
    ||Y_{u,v}||
    \le
    C \left( 1 + |Y_u| \right)
    ||X||_{p-var;[u,v]}
    \exp( c ||X||_{p-var;[u,v]}^p ).
  \end{align*}
  Then, using $||Y_{u,v}|| = d(Y_u,Y_v) \ge d(Y_s,Y_v) - d(Y_s,Y_u) = ||Y_{s,v}|| - ||Y_{s,u}||$, we have
  \begin{align}
    ||Y_{s,v}||
    &\le
    C \left( 1 + |Y_u| \right) ||X||_{p-var;[u,v]} \exp( C ||X||_{p-var;[u,v]}^p )
    +
    ||Y_{s,u}|| \notag \\
    \label{eq:gronwallFirstStep}
    &\le
    C \left( 1 + |Y_s| + ||Y_{s,u}|| \right) ||X||_{p-var;[u,v]} \exp( C ||X||_{p-var;[u,v]}^p )
    +
    ||Y_{s,u}||.
  \end{align}

  On the one hand, this gives
  \begin{align*}
    ||Y_{s,v}||
    &\le
    C \left( 1 + |Y_s| + ||Y_{s,u}|| \right) \exp( C ||X||_{p-var;[u,v]}^p ).
  \end{align*}
  Now letting $s=\tau_0 < \dots < \tau_M < \tau_{M+1} = v \le t$, by induction,
  \begin{align*}
    ||Y_{s,v}||
    &\le C^{M+1} \left( (M+1) (1 + |Y_s|) \right) \exp( C \sum_{i=0}^M ||X||_{p-var;[\tau_i,\tau_{i+1}]}^p ) \\
    &\le C^{M+1} \left( 1 + |Y_s| \right) \exp( C \sum_{i=0}^M ||X||_{p-var;[\tau_i,\tau_{i+1}]}^p ).
  \end{align*}
  Hence
  \begin{align*}
    \sup_{u \in [s,t]} ||Y_{s,u}|| \le C \left( 1 + |Y_s| \right) \exp( C N_{1;[s,t]} ).
  \end{align*}

  Then, using again \eqref{eq:gronwallFirstStep},
  \begin{align*}
    &||Y_{s,v}|| \\
    &\le
    C \left( 1 + |Y_s| + ||Y_{s,u}|| \right) ||X||_{p-var;[u,v]} \exp( C ||X||_{p-var;[u,v]}^p )
    +
    ||Y_{s,u}|| \\
    &\le
    C \left( 1 + |Y_s| + C \left( 1 + |Y_s| \right) \exp( C N_{1;[s,t]} ) \right) ||X||_{p-var;[u,v]} \exp( C ||X||_{p-var;[u,v]}^p )
    +
    ||Y_{s,u}|| \\
    &\le
    C^2 2 \left( 1 + |Y_s| \right) ||X||_{p-var;[u,v]} \exp( C ||X||_{p-var;[u,v]}^p ) \exp( C N_{1;[s,t]} )
    +
    ||Y_{s,u}||.
  \end{align*}
  Then letting $s=\tau_0 < \dots < \tau_M < \tau_{M+1} = v \le t$, by induction,
  \begin{align*}
    &||Y_{s,v}|| \\
    &\le
    \sum_{i=0}^M 
    C^2 2 \left( 1 + |Y_s| \right) ||X||_{p-var;[\tau_i,\tau_{i+1}]} \exp( C ||X||_{p-var;[\tau_i,\tau_{i+1}]}^p ) \exp( C N_{1;[s,t]} ) \\
    &\le
    \sum_{i=0}^M 
    C^2 2 \left( 1 + |Y_s| \right) ||X||_{p-var;[s,v]} \exp( C ||X||_{p-var;[\tau_i,\tau_{i+1}]}^p ) \exp( C N_{1;[s,t]} ) \\
    &\le
    (M + 1) C
    \left( 1 + |Y_s| \right) ||X||_{p-var;[s,v]} \exp( C \max_i ||X||_{p-var;[\tau_i,\tau_{i+1}]}^p ) \exp( C N_{1;[s,t]} ).
  \end{align*}
  Then
  \begin{align*}
    ||Y_{s,t}||
    &\le
    (N_{1;[s,t]} + 1 ) C \left( 1 + |Y_s| \right) \exp( C N_{1;[s,t]} ) ||X||_{p-var;[s,t]} \\
    &\le
    C \left( 1 + |Y_0| \right) \exp( C N_{1;[0,T]} ) ||X||_{p-var;[s,t]},
  \end{align*}
  as desired.

  2. It is straightforward to construct
  $\operatorname{Lip}^3$ vector fields $\tilde V_i$, that coincide with $V_i$ on
  an open neighborhood of $Y$ and they can be chosen in such a way that
  \begin{align*}
    ||\tilde V||_{\operatorname{Lip}^3}
    &\le \max_i \left( |A_i| + |b_i| \right) \left( |Y|_\infty + 2 \right) 
    \le C \exp( C N_1 ).
  \end{align*}
  The first statement then follows from \cite[Corollary 10.15]{friz-victoir-book},
  which also yields the desired bound on $||R^Y||_{2\alpha}$.
  The bound on $||Y'||_\alpha$ follows from Step 1.
\end{proof}

\begin{lemma}
  \label{lem:flowWithDeterminant}
  Let $\varphi \in C^2_b(\R^\dode, \R)$,
  $\mathbf{X}$ a geometric $\beta$-H\"older rough path in $\R^\drp$, $\beta \in (1/3,1/2]$,
  $V_1, \dots, V_\drp \in \Lip^3(\R^\dode)$,
  and $\Psi$ the flow
  to the RDE
  \begin{align*}
    dY = V(Y) d\mathbf{X}.
  \end{align*}

  Then
  \begin{align}
    \begin{split}
    &\Bigl| \varphi( \Psi^{-1}_{t,T} ) \det( D\Psi^{-1}_{t,T} )
    -
    \varphi( \Psi^{-1}_{s,T} ) \det( D\Psi^{-1}_{s,T} ) \Bigr| \\
    &\qquad \le C
    ||\varphi||_{C^2_b( M(y) )}
    \exp( C N_{1;[0,T]}(\mathbf{X}) ) \left( ||\mathbf{X}||_\beta + 1 \right)^{17 + 3 d} |t-s|^{\beta},
    \end{split}
    \notag \\
    \label{eq:flowWithDeterminantToShow2}
    \begin{split}
     &\Bigl| \varphi( \Psi^{-1}_{t,T} ) \det( D\Psi^{-1}_{t,T} )
     - 
     \varphi( \Psi^{-1}_{s,T} ) \det( D\Psi^{-1}_{s,T} )
     -
     \operatorname{div}( \varphi V )( \Psi^{-1}_{t,T} )
     \det( D\Psi^{-1}_{t,T} )
     X_{s,t} \Bigr| \\
     &\qquad \le
     C
     ||\varphi||_{C^2_b( M(y) )}
     \exp( C N_{1;[0,T]}(\mathbf{X}) ) \left( ||\mathbf{X}||_\beta + 1 \right)^{17 + 3 d} |t-s|^{2 \beta},
    \end{split}
  \end{align}
  with $C = C(\beta, V,\varphi)$. Here the inverse flow and its Jacobian are evaluated at $y \in \R^\dode$.
  Moreover we used
  \begin{align*}
    M(y) &:= \{ x: \inf_{r \in [0,T]} |\Psi^{-1}_{T-r,T}(y)| - 1 \le |x| \le \sup_{r \in [0,T]} |\Psi^{-1}_{T-r,T}(y)| + 1 \}. 
  \end{align*}
\end{lemma}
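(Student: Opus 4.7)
The plan is to treat $\Psi^{-1}_{t,T}(y)$ (for each fixed $y$) and its Jacobian $J_t := D\Psi^{-1}_{t,T}(y)$ as solutions of RDEs, to identify $f_t := \varphi(\Psi^{-1}_{t,T}(y))\det(J_t)$ as a controlled path with respect to $\mathbf{X}$, and then to read off the first and second order estimates from its Gubinelli expansion. The compensator $\operatorname{div}(\varphi V)(\Psi^{-1}_{t,T})\det(J_t)X_{s,t}$ in \eqref{eq:flowWithDeterminantToShow2} is no accident: in the smooth case $f_t$ is the density transported by the backward flow and satisfies, by the continuity equation in Lagrangian coordinates, $df_t = \operatorname{div}(\varphi V_k)(\Psi^{-1}_{t,T})\det(J_t)\,dX^k_t$. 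The whole point is to lift this to the rough setting with controlled path estimates.

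First I would set up the backward flow. Because $\mathbf{X}$ is a geometric rough path, the inverse flow $t\mapsto \Psi^{-1}_{t,T}(y)$ solves an RDE driven by the time-reversed rough path with $\Lip^3$ vector fields $-V_i$. Lemma \ref{lem:boundedRDEs} then gives $||\Psi^{-1}_{\cdot,T}(y)||_{1/\beta\text{-var}} \le C(1 + N_{1;[0,T]}(\mathbf{X}))$, which in particular shows that every trajectory $r\mapsto \Psi^{-1}_{r,T}(y)$ stays in the set $M(y)$ defined in the statement, so all evaluations of $\varphi$ and its derivatives needed below are bounded by $\|\varphi\|_{C^2_b(M(y))}$. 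The same lemma provides the controlled-path decomposition $(\Psi^{-1}_{t,T}, V(\Psi^{-1}_{t,T}))$ with the cubic bound \eqref{eq:controlBoundBadVF}.

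Next I would analyze the Jacobian. Differentiating the RDE for $\Psi^{-1}$ in the initial datum, $J_t$ satisfies a \emph{linear} RDE with matrix coefficients $-DV_i(\Psi^{-1}_{t,T}(y))$, which are uniformly bounded since $V_i \in \Lip^3$. Lemma \ref{lem:linearRDEs} gives that $J$ is controlled by $X$ with Gubinelli derivative $-DV_i(\Psi^{-1}_{t,T})J_t$, and
\begin{align*}
  \|J, J'\|_{X,\beta} \;\le\; C\exp(C N_{1;[0,T]}(\mathbf{X}))\bigl(1 + \|\mathbf{X}\|_\beta\bigr)^{3}.
\end{align*}
By Jacobi's formula, $\det(J_t)$ satisfies the scalar linear RDE $d\det(J_t) = -\operatorname{tr}(DV_k)(\Psi^{-1}_{t,T})\det(J_t)\,d\mathbf{X}^k_t = -\operatorname{div}(V_k)(\Psi^{-1}_{t,T})\det(J_t)\,d\mathbf{X}^k_t$, so Lemma \ref{lem:linearRDEs} again applies to give a bound of the same form for $(\det(J), -\operatorname{div}(V_k)(\Psi^{-1}_{t,T})\det(J))$ as a controlled path.

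Finally, I would combine these ingredients. The composition $\varphi(\Psi^{-1}_{t,T}(y))$ is a controlled path with Gubinelli derivative $D\varphi(\Psi^{-1}_{t,T})V_k(\Psi^{-1}_{t,T})$, and the remainders in its Taylor expansion of order two pick up $\|\varphi\|_{C^2_b(M(y))}$. Multiplying with $\det(J_t)$ via the product rule for controlled paths (and using that a product of cubic bounds in $\|\mathbf{X}\|_\beta$ with the quadratic compensator estimate produces at worst the exponent $17 + 3d$ claimed), one obtains
\begin{align*}
  f_t - f_s \;=\; \operatorname{div}(\varphi V_k)(\Psi^{-1}_{s,T})\det(J_s)\,X^k_{s,t} + O(|t-s|^{2\beta}),
\end{align*}
with the $O$-term controlled by the product of $\|\varphi\|_{C^2_b(M(y))}$, $\exp(CN_{1;[0,T]}(\mathbf{X}))$ and a polynomial in $\|\mathbf{X}\|_\beta$. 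The $\beta$-Hölder estimate follows a fortiori by discarding the compensator. The sign in \eqref{eq:flowWithDeterminantToShow2} works out because both derivatives of $\det(J)$ and of $\varphi(\Psi^{-1})$ along $V$ produce the full $\operatorname{div}(\varphi V)$ upon combination.

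The main obstacle will be bookkeeping rather than any genuinely new idea: keeping track of how cubic bounds coming from Lemma \ref{lem:boundedRDEs} compose with the linear-RDE bounds for $J$ and $\det(J)$, how the Gubinelli product and composition rules accumulate powers of $(1+\|\mathbf{X}\|_\beta)$, and ensuring that all evaluations of $\varphi$ happen inside $M(y)$ so that only the local $C^2_b(M(y))$ norm enters the final estimate.
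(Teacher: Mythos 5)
Your proposal matches the paper's proof essentially step for step: identify the inverse flow and its Jacobian as RDE solutions (the paper works with the time-reversed driver $\overleftarrow{X}_r = X_{T-r}$ and then converts back via the forward/backward lemma), apply composition with $\varphi$ and Jacobi's formula to get controlled-path decompositions of $A_r=\varphi(\Psi^{-1}_{T-r,T})$ and $B_r=\det(D\Psi^{-1}_{T-r,T})$ using Lemma \ref{lem:boundedRDEs}, Lemma \ref{lem:linearRDEs} and Lemma \ref{lem:applicationOfSmoothFunction}, then combine via the product rule for controlled paths (Lemma \ref{lem:productOfControlledPaths}) and track the powers of $(1+\|\mathbf{X}\|_\beta)$, $\exp(CN_{1;[0,T]}(\mathbf{X}))$ and $\|\varphi\|_{C^2_b(M(y))}$. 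The approach is the same; the only differences are bookkeeping (your signs differ from the paper's because you parametrize by $t$ directly rather than by $r=T-t$, and you evaluate the compensator at $s$ rather than $t$, both harmless for the $2\beta$-estimate).
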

\begin{proof}
  We shall need the fact that
  the inverse flow and its Jacobian satisfy the following RDEs
  (see for example \cite[Section 11]{friz-victoir-book}),
  \begin{align}
    \label{eq:RDEForReverseFlow}
    &d\Psi^{-1}_{T-\cdot,T}(y) = V( \Psi^{-1}_{T-\cdot,T}(y) ) dX_{T-\cdot}, &\ & \Psi^{-1}_{T,T}(y) = y, \\
    \label{eq:RDEForReverseFlowJacobian}
    &d D\Psi^{-1}_{T-\cdot,T}(y) = DV_i( \Psi^{-1}_{T-\cdot,T}(y) ) D\Psi^{-1}_{T-\cdot,T}(y) dX^i_{T-\cdot}, &\ & D\Psi^{-1}_{T,T}(y) = I,
  \end{align}

  We proceed to show the second inequality of the statement, as the first one follows analogously.
  In what follows $C$ will denote a constant changing from line to line, only depending on $\beta$, $V$, $\varphi$
  (but not on $X$ or $y$).

  Let
  $A_r := \varphi( \Psi^{-1}_{T-r,T}(y) )$,
  $B_r := \det( D\Psi^{-1}_{T-r,T}(y) )$.
  Using \eqref{eq:RDEForReverseFlow}
  we have that $(A,A') \in \D^{2\beta}_{\overleftarrow X}$, with
  \begin{align*}
    A'_r = \langle D\varphi( \Psi^{-1}_{T-r,T}(y) ), V( \Psi^{-1}_{T-r,T}(y) ) \rangle
  \end{align*}
  and $\overleftarrow X_r := X_{T-r}$.
  More specific, by Lemma \ref{lem:boundedRDEs}
  together with Lemma \ref{lem:applicationOfSmoothFunction}
  \begin{align*}
    ||A||_{\overleftarrow{X},\beta} \le C
    ||\varphi||_{C^2_b( M(y) ) }
    \left( ||\mathbf{X}||_\alpha + 1 \right)^8.
  \end{align*}
  where $M(y) := \{ x: \inf_{r \in [0,T]} |\Psi^{-1}_{T-r,T}(y)| - 1 \le |x| \le \sup_{r \in [0,T]} |\Psi^{-1}_{T-r,T}(y)| + 1 \}$.

  Moreover using \eqref{eq:RDEForReverseFlowJacobian}
  and the derivative of the determinant,
  \begin{align}
    \label{eq:derivativeOfDet}
    D \det|_A \cdot M = \det(A) \operatorname{Tr}[A^{-1} M],
  \end{align}
  we get that $(B,B') \in \D^{2\beta}_{\overleftarrow X}$, with
  \begin{align*}
    B'_r = (\operatorname{div} V)( \Psi^{-1}_{T-r,T}(y) ) \det( D\Psi^{-1}_{T-r,T}(y) ).
  \end{align*}
  More specific, by Lemma \ref{lem:linearRDEs}.2 together with Lemma \ref{lem:applicationOfSmoothFunction}
  \begin{align*}
    ||B||_{\overleftarrow{X},\beta}
    \le
    C
    ||\det||_{C^2_b( N(y) ) }
    \exp( C N_{1;[0,T] } ) \left( ||\mathbf{X}||_\alpha + 1 \right)^8 \\
    \le
    C
    \left( 1 + 
    \sup_{r \in [0,T]} |D\Psi^{-1}_{T-r,T}(y)| \right)^\dode
    \exp( C N_{1;[0,T] } ) \left( ||\mathbf{X}||_\alpha + 1 \right)^8 \\
  \end{align*}
  where $N(y) := \{ A : |A| \le \sup_{r \in [0,T]} |D\Psi^{-1}_{T-r,T}(y)| + 1 \}$.

  Applying Lemma \ref{lem:productOfControlledPaths}, we get for $0 \le u < v \le T$
  \begin{align*}
    &\left| A_v B_v - A_u B_u - \left( A'_u B_u - A_u B'_u \right) {\overleftarrow X}_{u,v} \right| \\
    &\le
    C
    \left( 1 + ||\mathbf{X}||_\alpha \right)
    \left(
    |\varphi(y)|
    +
    ||\varphi||_{C^2_b( M(y) ) }
    \left( ||\mathbf{X}||_\alpha + 1 \right)^8
    \right) \\
    &\qquad\times
    \left(
    1 
    +
    \left( 1 + 
    \sup_{r \in [0,T]} |D\Psi^{-1}_{T-r,T}(y)| \right)^\dode
    \exp( C N_{1;[0,T] } ) \left( ||\mathbf{X}||_\alpha + 1 \right)^8
    \right),
  \end{align*}
  Finally noting
  \begin{align*}
    A'_r B_r + A_r B_r' = - \operatorname{div}( \varphi V )( \Psi^{-1}_{T-r,T}(y) ) \det( D\varphi( \Psi^{-1}_{T-r,T}(y) ),
  \end{align*}
  and using $t = T-u$, $s = T-t$, the desired result follows from Lemma \ref{lem:forwardBackwardControlled}.
\end{proof}

The following result from the previous proof is worth noting separately.
\newcommand{\tr}{\operatorname{tr}}
\begin{lemma}[Liouville's formula for RDEs]
  Let $\mathbf{X}$ be a matrix-valued, geometric $\alpha$-H\"older rough path, $\alpha \in (0,1]$
  and consider the matrix-valued linear equation
  \begin{align*}
    dM_t &= d\mathbf{\arp}^i_t \cdot M_t \\
    M_0 &= I \in \R^{\dode \times \dode}.
  \end{align*}

  Denote $D_t := \det(M_t)$, then
  \begin{align*}
    dD_t &= 
    D_t \operatorname{tr}[d Z^i_t], \\
    D_0 &= 1,
  \end{align*}
  which is explicitly solved as
  \begin{align*}
    D_t = \exp( \tr[ \arp^i_t ] - \tr[ \arp^i_0 ] ).
  \end{align*}
\end{lemma}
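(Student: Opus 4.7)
The plan is to reduce this to two applications of the chain rule for smooth functions of controlled paths, which is available in unadulterated form because $\mathbf{X}$ is assumed geometric (no quadratic--variation compensator appears).

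First I would record that, by Lemma \ref{lem:linearRDEs} applied componentwise, the matrix linear RDE $dM = d\mathbf{X} \cdot M$ has a unique $\mathcal{D}^{2\alpha}_X$--controlled solution on $[0,T]$, and that $M_t$ is invertible for every $t$. Invertibility follows from running the dual linear RDE $dN = -N \cdot d\mathbf{X}$ with $N_0 = I$ and checking that $d(NM)=0$ in the rough--path sense (product rule, Lemma~\ref{lem:productOfControlledPaths}), so that $NM \equiv I$ and hence $M_t^{-1} = N_t$ exists. In particular $\{M_t : t \in [0,T]\}$ stays in an open set on which $\det$ is smooth, so $D_t = \det(M_t)$ is itself a well-defined controlled path.

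Next I would apply the chain rule to the smooth function $\det$. Since $\mathbf{X}$ is geometric, composition with $\det$ just yields $D\det|_{M_t}$ against $dM_t$ with no Itô correction. Using the formula \eqref{eq:derivativeOfDet} for the derivative of the determinant,
\begin{align*}
dD_t \;=\; \det(M_t)\,\tr\bigl[M_t^{-1} dM_t\bigr] \;=\; D_t\,\tr\bigl[M_t^{-1}\, d\mathbf{X}_t \, M_t\bigr].
\end{align*}
Cyclicity of $\tr$ commutes with the rough integral---at the level of the compensated Riemann sums defining $\int \cdot\, d\mathbf{X}$ this is a linear identity in the integrand, passing to the limit by continuity of the rough integral---so $\tr[M_t^{-1} d\mathbf{X}_t M_t] = \tr[d\mathbf{X}_t] = d\,\tr[X_t]$ (the latter a scalar, geometric rough path). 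This yields the scalar linear RDE
\begin{align*}
dD_t \;=\; D_t \, d\tr[X_t], \qquad D_0 = 1.
\end{align*}

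Finally I would solve this 1D equation by a second use of the geometric chain rule: since $D_0 = 1 > 0$ and the candidate solution $\exp(\tr[X_t]-\tr[X_0])$ is strictly positive, $\log$ is admissible, and the chain rule applied to $\log$ gives $d\log D_t = d\tr[X_t]$; integrating from $0$ to $t$ produces $D_t = \exp(\tr[X_t]-\tr[X_0])$. Uniqueness of the 1D linear RDE (again from Lemma~\ref{lem:linearRDEs}) shows this is the solution. The only genuine technical point is the interchange of $\tr$ with the rough integral, and the cyclic manipulation inside the integrand; both are immediate consequences of linearity of the rough integral in its controlled--path argument, so I do not expect a serious obstacle.
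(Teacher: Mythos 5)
Your approach is essentially the one the paper has in mind: the paper does not give a freestanding proof of this lemma, but extracts it from the proof of Lemma~\ref{lem:flowWithDeterminant}, where the controlled structure of $\det(D\Psi^{-1})$ is obtained from the RDE for the Jacobian together with the derivative-of-determinant formula~\eqref{eq:derivativeOfDet}. You do the same thing, but you spell out the pieces the paper leaves implicit --- invertibility of $M_t$ (via the dual linear RDE and the product rule), the passage from the matrix-valued RDE to the scalar one via the geometric chain rule for $\det$, and the explicit solution via $\log$ --- so your write-up is actually a more complete argument than what is on the page.

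One small caution on the presentation: you describe the step $\tr[M_t^{-1}\, d\mathbf{X}_t\, M_t] = \tr[d\mathbf{X}_t]$ as an ``immediate consequence of linearity of the rough integral in its controlled-path argument''. That is not quite the right justification. The cyclicity of trace kills the conjugation only at the level of the \emph{first-order} (Gubinelli-derivative) term; the matching of the second-order terms requires the interplay between the Hessian of $\det$ and the geometric (shuffle) relation for $\mathbb{X}$ --- precisely what the geometric chain rule encodes when you evaluate the composed vector field $D\det\circ V$. If you compute $D\det|_M[V_{(i',k')}(M)] = \det(M)\,\tr[M^{-1}V_{(i',k')}(M)] = \det(M)\,\delta_{i'k'}$ you see that the composed vector field is already state-independent (after the $\det(M)$ factor), and this is what makes the reduction to the scalar RDE $dD_t = D_t\, d\tr[X_t]$ a direct application of the chain rule for geometric rough paths, rather than a separate ``commute trace with the integral'' argument. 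I would phrase the key step that way rather than as a sum-level interchange. Finally, note that the lemma as stated claims $\alpha\in(0,1]$, but the lemmas you cite (\ref{lem:linearRDEs}, \ref{lem:productOfControlledPaths}) are restricted to $\alpha\in(1/3,1/2]$; for $\alpha>1/2$ one would appeal to Young theory and for $\alpha\le 1/3$ additional levels are needed. That range discrepancy is in the statement itself, not in your proof.
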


\begin{lemma}
  \label{lem:productOfControlledPaths}
  Let $X \in \Cr^\alpha$, and $(A,A'),(B,B') \in \D^{2\alpha}_X$.
  Then $(Y,Y') := (A B, A'B + A B') \in \D^{2 \alpha}_X$
  and
  \begin{align*}
    &||Y'||_{\alpha} 
    +
    ||R^Y||_{2 \alpha}
    \le
    C \left( 1 + ||X||_\alpha \right)
    \left( |A_0| + ||A,A'||_{X,\alpha} \right)
    \left( |B_0| + ||B,B'||_{X,\alpha} \right).
  \end{align*}
\end{lemma}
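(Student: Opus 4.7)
The proof is a direct calculation based on the Leibniz identity for increments. I would begin by writing
\begin{equation*}
Y_{s,t} = A_{s,t} B_s + A_s B_{s,t} + A_{s,t} B_{s,t},
\end{equation*}
and inserting the decompositions $A_{s,t} = A'_s X_{s,t} + R^A_{s,t}$ and $B_{s,t} = B'_s X_{s,t} + R^B_{s,t}$. Collecting the terms linear in $X_{s,t}$ gives $(A'_s B_s + A_s B'_s) X_{s,t}$, confirming the claimed Gubinelli derivative $Y'_s = A'_s B_s + A_s B'_s$. The remainder is then
\begin{equation*}
R^Y_{s,t} \;=\; R^A_{s,t} B_s \;+\; A_s R^B_{s,t} \;+\; A_{s,t} B_{s,t},
\end{equation*}
so that $(Y,Y')$ is controlled and the problem reduces to bounding each piece.

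From here the estimates are routine. For $\|R^Y\|_{2\alpha}$, the first two summands are directly $O(|t-s|^{2\alpha})$ by definition of the remainder norms, while the cross term $A_{s,t}B_{s,t}$ is bounded by $\|A\|_\alpha \|B\|_\alpha \, |t-s|^{2\alpha}$. For $\|Y'\|_\alpha$ I would use the standard H\"older product rule
\begin{equation*}
\|Y'\|_\alpha \le \|A'\|_\alpha \|B\|_\infty + \|A'\|_\infty \|B\|_\alpha + \|A\|_\alpha \|B'\|_\infty + \|A\|_\infty \|B'\|_\alpha.
\end{equation*}
To convert the appearing $\|A\|_\infty$, $\|A'\|_\infty$, $\|A\|_\alpha$ (and their $B$-counterparts) into the norms on the right-hand side of the lemma, I would apply the elementary bounds
\begin{equation*}
\|A\|_\infty \le |A_0| + T^\alpha \|A\|_\alpha, \qquad \|A\|_\alpha \le \|A'\|_\infty \|X\|_\alpha + T^\alpha \|R^A\|_{2\alpha},
\end{equation*}
and their analogues for $B$. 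Multiplying out and collecting the factors of $\|X\|_\alpha$ produces the claimed product bound $C(1+\|X\|_\alpha)(|A_0|+\|A,A'\|_{X,\alpha})(|B_0|+\|B,B'\|_{X,\alpha})$ after absorbing $T$-dependent constants into $C$.

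The only real bookkeeping subtlety, and the place where a careless calculation drops a factor, is that the seminorm $\|A,A'\|_{X,\alpha} = \|A'\|_\alpha + \|R^A\|_{2\alpha}$ does not by itself control $\|A'\|_\infty$; one uses $\|A'\|_\infty \le |A'_0| + T^\alpha \|A'\|_\alpha$, with $|A'_0|$ (and similarly $|B'_0|$) absorbed into the constant $C$. Once this convention is in place, the whole argument is just the linear expansion of the Leibniz identity plus two lines of H\"older/seminorm bookkeeping.
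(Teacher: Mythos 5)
Your decomposition and bookkeeping are exactly what the paper means by ``straightforward calculation'': writing
$Y_{s,t} = A_{s,t}B_s + A_sB_{s,t} + A_{s,t}B_{s,t}$,
substituting the controlled decompositions, and reading off $Y'_s = A'_sB_s + A_sB'_s$ and
$R^Y_{s,t} = R^A_{s,t}B_s + A_sR^B_{s,t} + A_{s,t}B_{s,t}$,
is the right route, as is the subsequent H\"older bookkeeping.

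However, your final step is not valid, and in fact you have put your finger on a genuine defect in the statement of the lemma. You observe that the estimate requires $\|A'\|_\infty$ (and $\|B'\|_\infty$), that $\|A,A'\|_{X,\alpha}=\|A'\|_\alpha+\|R^A\|_{2\alpha}$ does not control $\|A'\|_\infty$, and that one must use $\|A'\|_\infty\le|A'_0|+T^\alpha\|A'\|_\alpha$. You then propose to ``absorb $|A'_0|$ (and $|B'_0|$) into the constant $C$''. This is not allowed: $A'_0$ is part of the data, not a structural constant, and $C$ may depend only on $\alpha$ and $T$. As stated, the lemma is actually false. For instance take $X\equiv 0$, $A\equiv 1$, $A'\equiv M$ (so $R^A\equiv 0$ and $\|A,A'\|_{X,\alpha}=0$, $|A_0|=1$), and $B_t=t$, $B'\equiv 0$ (so $\|B,B'\|_{X,\alpha}=T^{1-2\alpha}$, $|B_0|=0$). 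Then $Y'_t=Mt$, so $\|Y'\|_\alpha=MT^{1-\alpha}$ grows without bound in $M$, while the claimed right-hand side $C\,T^{1-2\alpha}$ is independent of $M$. The correct statement must include $|A'_0|$ and $|B'_0|$ on the right, e.g.
\begin{align*}
\|Y'\|_\alpha+\|R^Y\|_{2\alpha}
\le C\,(1+\|X\|_\alpha)\,(|A_0|+|A'_0|+\|A,A'\|_{X,\alpha})\,(|B_0|+|B'_0|+\|B,B'\|_{X,\alpha}),
\end{align*}
in line with the convention used in the paper's own Lemma~\ref{lem:applicationOfSmoothFunction} (following \cite[Lemma~7.3]{FH14}), which carries a $1+|Y'_0|$ factor. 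So do not declare $|A'_0|,|B'_0|$ ``absorbed''; either correct the target inequality, or note explicitly that the constant is allowed to depend on $|A'_0|$ and $|B'_0|$ (which is what is implicitly used in the only application, Lemma~\ref{lem:flowWithDeterminant}, where $A'_0$ and $B'_0$ are fixed in terms of $\varphi$ and $V$).
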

\begin{proof}
  Straightforward calculation. 
\end{proof}

\begin{lemma}
  \label{lem:forwardBackwardControlled}
  Let $\tilde Y_t := Y_{T-t}, \tilde{Y}'_t := Y'_{T-t}$, $\tilde X_t := X_{T-t}$.
  If $(\tilde Y,\tilde{Y}') \in \D^{2\alpha}_{\tilde X}$, then
  $(Y,Y') \in \D^{2\alpha}_X$ and
  \begin{align*}
    ||Y'||_\alpha + ||R^Y||_{2\alpha}
    \le
    ||\tilde{Y}'||_\alpha + ||R^{\tilde Y}||_{2\alpha}
    +
    ||\tilde{Y}'||_\alpha ||X||_\alpha.
  \end{align*}
\end{lemma}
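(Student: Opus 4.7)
The plan is to reduce everything to the defining decomposition of $(\tilde Y, \tilde Y')$ via the change of variables $u := T-t$, $v := T-s$, and then carefully track sign reversals.

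First I would observe the pointwise identities $Y_{s,t} = -\tilde Y_{u,v}$ and $X_{s,t} = -\tilde X_{u,v}$, and substitute them into the controlled-path decomposition $\tilde Y_{u,v} = \tilde Y'_u \, \tilde X_{u,v} + R^{\tilde Y}_{u,v}$. This gives
$$Y_{s,t} \;=\; \tilde Y'_{T-t}\, X_{s,t} \;-\; R^{\tilde Y}_{T-t,T-s} \;=\; Y'_t\, X_{s,t} \;-\; R^{\tilde Y}_{T-t,T-s}.$$

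Next, comparing with the forward decomposition $Y_{s,t} = Y'_s X_{s,t} + R^Y_{s,t}$ identifies
$$R^Y_{s,t} \;=\; (Y'_t - Y'_s)\, X_{s,t} \;-\; R^{\tilde Y}_{T-t,T-s} \;=\; Y'_{s,t}\, X_{s,t} \;-\; R^{\tilde Y}_{T-t,T-s}.$$
Estimating the two terms in the $2\alpha$-H\"older norm immediately yields $||R^Y||_{2\alpha} \le ||Y'||_\alpha \, ||X||_\alpha + ||R^{\tilde Y}||_{2\alpha}$. For the $\alpha$-H\"older norm of the Gubinelli derivative itself, I would use the evident identity $Y'_{s,t} = -\tilde Y'_{T-t,T-s}$ to conclude $||Y'||_\alpha \le ||\tilde Y'||_\alpha$.

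Adding these two estimates delivers the claimed inequality. There is no real obstacle here; the only care required is tracking the sign flips when reversing time on both $X$ and $Y$ simultaneously, after which the bound writes itself.
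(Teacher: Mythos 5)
Your proof is correct and takes essentially the same route as the paper: both derive the key identity $R^Y_{s,t} = Y'_{s,t}\,X_{s,t} - R^{\tilde Y}_{T-t,T-s}$ (the paper by adding and subtracting $Y'_t X_{s,t}$ and then time-reversing, you by substituting into the controlled decomposition of $\tilde Y$ and comparing), and then estimate the two terms in the $2\alpha$-H\"older norm. The bookkeeping of signs and the observation $||Y'||_\alpha \le ||\tilde Y'||_\alpha$ match the paper's intent exactly.
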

\begin{proof}
  This follows from
  \begin{align*}
    Y_t - Y_s - Y'_s X_{s,t}
    &=
    Y_t - Y_s - Y'_t X_{s,t} + Y'_{s,t} X_{s,t} \\
    &=
    \tilde Y_u - \tilde Y_v - \tilde Y'_u X_{v,u} + Y'_{s,t} X_{s,t}  \\
    &=
    - \left( \tilde Y_v - \tilde Y_u - \tilde Y'_u X_{u,v} \right) + Y'_{s,t} X_{s,t},
  \end{align*}
  where $v:=T-s$, $u:=T-t$.
\end{proof}

\begin{lemma}
  \label{lem:applicationOfSmoothFunction}
  Let $X$ be an $\alpha$-H\"older path, $\alpha \in (1/3,1/2]$.
  Let $(Y,Y') \in \D^{2\alpha}_X$,
  $\phi \in C^2_b$.
  Then $(\phi(Y), D\phi(Y) Y') \in \D^{2\alpha}_X$ with
  \begin{align*}
    ||\phi(Y), D\phi(Y) Y'||_{X,\alpha}
    \le
    C(\alpha,T)
    ||\phi||_{C^2_b}
    \left( 1 + ||X||_\alpha \right)^2
    \left( 1 + |Y_0'| + ||Y,Y'||_{X,\alpha} \right)^2.
  \end{align*}
\end{lemma}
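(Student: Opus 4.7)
The natural candidate for the Gubinelli derivative is $(\phi(Y))' := D\phi(Y) Y'$, and it is dictated by a second-order Taylor expansion of $\phi$ around $Y_s$. Writing
\[
  \phi(Y_t) - \phi(Y_s) = D\phi(Y_s)(Y_t - Y_s) + \int_0^1 (1-\theta)\, \langle D^2\phi(Y_s + \theta Y_{s,t}) Y_{s,t}, Y_{s,t} \rangle\, d\theta
\]
and substituting $Y_{s,t} = Y'_s X_{s,t} + R^Y_{s,t}$ into the linear term, one identifies the remainder
\[
  R^{\phi(Y)}_{s,t} = D\phi(Y_s) R^Y_{s,t} + \int_0^1 (1-\theta)\, \langle D^2\phi(Y_s + \theta Y_{s,t}) Y_{s,t}, Y_{s,t} \rangle\, d\theta.
\]
So the work reduces to giving $(2\alpha)$-H\"older bounds on this remainder and an $\alpha$-H\"older bound on $D\phi(Y)Y'$.

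The key preparatory estimate is on the $\alpha$-H\"older norm of $Y$ itself. Using $\sup_{t \le T} |Y'_t| \le |Y'_0| + ||Y'||_\alpha T^\alpha \le (1+T^\alpha)(|Y'_0| + ||Y,Y'||_{X,\alpha})$ and $|R^Y_{s,t}| \le ||R^Y||_{2\alpha}|t-s|^{2\alpha} \le ||R^Y||_{2\alpha} T^\alpha |t-s|^\alpha$, one obtains
\[
  |Y_{s,t}| \le C(T) (1 + ||X||_\alpha) (|Y'_0| + ||Y,Y'||_{X,\alpha}) |t-s|^\alpha.
\]
Squaring gives a $|t-s|^{2\alpha}$-bound on $|Y_{s,t}|^2$ with the prefactor $(1+||X||_\alpha)^2 (|Y'_0| + ||Y,Y'||_{X,\alpha})^2$ already present on the right-hand side of the claim.

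Controlling $R^{\phi(Y)}$ is then immediate from the bounds $|D\phi|,|D^2\phi| \le ||\phi||_{C^2_b}$ combined with the estimates on $R^Y$ and $|Y_{s,t}|^2$. For the $\alpha$-H\"older norm of $D\phi(Y)Y'$, I split
\[
  D\phi(Y_t) Y'_t - D\phi(Y_s) Y'_s = D\phi(Y_t)(Y'_t - Y'_s) + (D\phi(Y_t) - D\phi(Y_s)) Y'_s,
\]
bound the first summand by $||\phi||_{C^2_b} ||Y'||_\alpha |t-s|^\alpha$ and the second by $||\phi||_{C^2_b} |Y_{s,t}| \sup_{r \le T}|Y'_r|$, and insert the preparatory estimate. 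Summing the two contributions gives the claim.

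The main obstacle is not any deep idea but purely the book-keeping of the polynomial factors: the $|Y_{s,t}|^2$ piece of the remainder contributes $(1+||X||_\alpha)^2(|Y'_0|+||Y,Y'||_{X,\alpha})^2$, while the $D\phi(Y_t) - D\phi(Y_s)$ piece of the derivative bound contributes $(1+||X||_\alpha)(|Y'_0|+||Y,Y'||_{X,\alpha})^2$, so everything fits under $(1+||X||_\alpha)^2 (1 + |Y'_0| + ||Y,Y'||_{X,\alpha})^2$; no part of the estimate requires anything beyond $\phi \in C^2_b$ and Taylor's theorem.
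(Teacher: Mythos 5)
Your proof is correct and self-contained. The paper itself gives no argument for this lemma and simply defers to \cite[Lemma 7.3]{FH14}; the argument there is exactly the one you reconstruct — use the second-order Taylor expansion of $\phi$ to split the increment into a linear term (giving the candidate Gubinelli derivative $D\phi(Y)Y'$) and a remainder, then bound the remainder using the quadratic-in-$Y_{s,t}$ structure together with the controlled-path estimate $|Y_{s,t}|\lesssim (1+\|X\|_\alpha)(|Y'_0|+\|Y,Y'\|_{X,\alpha})|t-s|^\alpha$. Your book-keeping of the polynomial factors matches the stated bound, and the $\alpha$-H\"older estimate on $D\phi(Y)Y'$ via the telescoping split is standard and correct. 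So this is not a different route — it is the proof the paper is pointing to, just written out.
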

\begin{proof}
  See \cite[Lemma 7.3]{FH14}. 
\end{proof}

\begin{lemma}[Adjoint equation]
  \label{lem:flowLeftPoint}
  Let $\mathbf{\arp}$ be a geometric $\alpha$-H\"older rough path, $\alpha \in (1/3,1/2]$.
  Let $V=\left( V_{i}\right) _{i=1,\ldots ,\drp}$ be a collection of $\operatorname{Lip}^3(\R^\dode)$
  vector fields.
  Let
  \begin{align*}
    dY^{t,x}_s &= V(Y^{t,x}_s) d\mathbf{\arp}_s \\
    Y^{t,x}_t &= x.
  \end{align*}
  Then
  \begin{align*}
    dY^{t,x}_T &= - DY^{t,x}_T V(x) d\mathbf{\arp}_t \\
    dDY^{t,x}_T &= - D^2 Y^{t,x}_T \langle V(x) d\mathbf{\arp}_t, \cdot \rangle
        - D Y^{t,x}_T DV(x) d\mathbf{\arp}_t.
  \end{align*}
\end{lemma}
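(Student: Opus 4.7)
The plan is to exploit the cocycle (flow) property of $Y^{t,x}$ in the backward variable $t$. Write $\Phi_{t,T}(x) := Y^{t,x}_T$. Since $Y$ solves a well-posed RDE with $\Lip^3$ coefficients, the flow map $\Phi_{s,t}$ is defined for all $0 \le s \le t \le T$ and satisfies the cocycle identity $\Phi_{t,T}(x) = \Phi_{t+h,T}(\Phi_{t,t+h}(x))$. I will use this identity, together with space-Taylor expansion of $\Phi_{t+h,T}$ at $x$, to derive the RDEs stated in the lemma as backward rough differential equations in $t$.

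First I would establish the local expansion of $\Phi_{t,t+h}(x)$. By Davie's definition applied to the RDE $dY = V(Y)\,d\mathbf{X}$, one has
\begin{align*}
\Phi_{t,t+h}(x) - x = V(x)\,X_{t,t+h} + DV(x)V(x)\,\mathbb{X}_{t,t+h} + O(|h|^{3\alpha}).
\end{align*}
Substituting into the cocycle identity and Taylor expanding $\Phi_{t+h,T}$ at $x$ to second order gives
\begin{align*}
\Phi_{t,T}(x) - \Phi_{t+h,T}(x)
&= D\Phi_{t+h,T}(x)\,V(x)\,X_{t,t+h} \\
&\quad + D\Phi_{t+h,T}(x)\,DV(x)V(x)\,\mathbb{X}_{t,t+h} \\
&\quad + \tfrac{1}{2} D^2\Phi_{t+h,T}(x)\bigl\langle V(x)X_{t,t+h},\,V(x)X_{t,t+h}\bigr\rangle + o(|h|^{2\alpha}).
\end{align*}
Recognising this as a Davie-type expansion (in the backward time variable) and recalling that $\mathbb{X}$ is the second level of the geometric rough path so that $\mathrm{Sym}(\mathbb{X}_{t,t+h}) = \tfrac12 X_{t,t+h} \otimes X_{t,t+h}$, one identifies the right-hand side with the Davie expansion of the RDE
\begin{align*}
  dY^{t,x}_T = -D\Phi_{t,T}(x)\,V(x)\, d\mathbf{X}_t,
\end{align*}
in backward rough-path sense. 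This is the first claim; making it rigorous requires checking that $t \mapsto (D\Phi_{t,T}(x)V(x), D[D\Phi_{t,T}(x)V(x)]V(x))$ is a controlled path in $\mathcal{D}^{2\alpha}_{\overleftarrow{X}}$, which follows from Lemma \ref{lem:applicationOfSmoothFunction} combined with Lemma \ref{lem:boundedRDEs} applied to the RDE governing $D\Phi_{t,T}$.

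For the second identity, I differentiate the cocycle relation in $x$ to obtain
\begin{align*}
D\Phi_{t,T}(x) = D\Phi_{t+h,T}(\Phi_{t,t+h}(x))\,D\Phi_{t,t+h}(x),
\end{align*}
then expand each factor. The standard RDE for the Jacobian yields $D\Phi_{t,t+h}(x) = I + DV(x)X_{t,t+h} + O(|h|^{2\alpha})$, while Taylor expanding $D\Phi_{t+h,T}$ around $x$ gives $D\Phi_{t+h,T}(\Phi_{t,t+h}(x)) = D\Phi_{t+h,T}(x) + D^2\Phi_{t+h,T}(x)\langle V(x)X_{t,t+h},\cdot\rangle + O(|h|^{2\alpha})$. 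Multiplying out and collecting the linear-in-$X_{t,t+h}$ terms produces the two terms in the stated equation; the remainder is $O(|h|^{2\alpha})$, which (together with the analogous $\mathbb{X}$-level expansion) is exactly the Davie condition for the second RDE.

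The main obstacle will be handling the second-order remainders: unlike in the smooth case, $|h|^{2\alpha}$ estimates are not enough to identify the backward RDE, and one must track the $\mathbb{X}$-terms carefully, using the geometricity of $\mathbf{X}$ to symmetrize products like $V(x)X_{t,t+h} \otimes V(x)X_{t,t+h}$. A clean way to avoid repeating such bookkeeping is to smoothly approximate $\mathbf{X}$ by $X^\varepsilon \in C^1$, prove both identities for the smooth flow (where they reduce to the classical adjoint/variational equations, via the chain rule applied to $\Phi^\varepsilon_{t,T}(x) = \Phi^\varepsilon_{t+h,T}(\Phi^\varepsilon_{t,t+h}(x))$ and differentiating in $h$ at $h=0$), and then pass to the limit using continuity of the flow and its Jacobian as functions of the driving rough path (Lemmas \ref{lem:boundedRDEs}–\ref{lem:applicationOfSmoothFunction}) together with stability of rough integrals against controlled integrands. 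This reduces the rough-path argument to a single passage to the limit, modulo uniform-in-$\varepsilon$ bounds that are already provided by the cited lemmas.
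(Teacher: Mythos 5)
Your proof is correct in outline, but it takes a genuinely different route from the paper's. The paper differentiates (in $t$) the identity $Y^{t,\,\Phi^{-1}_{t,T}(x)}_T = x$, i.e.\ $\Phi_{t,T}(\Phi^{-1}_{t,T}(x))=x$, exploiting the fact that the reversed inverse flow $t\mapsto\Phi^{-1}_{t,T}(x)$ is itself a forward RDE solution (this is exactly equation \eqref{eq:RDEForReverseFlow} recorded in the proof of Lemma~\ref{lem:flowWithDeterminant}); a rough chain rule then yields the first identity with no Davie-level bookkeeping. For the Jacobian the paper replaces $V$ by the extended vector field $G(x_1,x_2)=(V(x_1),DV(x_1)x_2)$ and applies the first identity to the enlarged system, so the second equation comes for free. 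Your approach instead uses the cocycle relation $\Phi_{t,T}=\Phi_{t+h,T}\circ\Phi_{t,t+h}$, the Davie expansion of the short-time increment, and a second-order Taylor expansion of $\Phi_{t+h,T}$; this is more elementary and self-contained (only Davie-type estimates and the flow regularity are needed), at the cost of the $\mathbb{X}$-level bookkeeping and an explicit use of geometricity ($\mathrm{Sym}\,\mathbb{X}_{s,t}=\tfrac12 X_{s,t}\otimes X_{s,t}$) to absorb the quadratic Taylor term; the paper's enlargement trick sidesteps all of this. Your fallback via smooth approximation and rough-path stability is also sound and not the route the paper takes. One small caveat worth flagging in a final write-up: for the $O(|h|^{3\alpha})$ Taylor remainder you implicitly need $\Phi_{t+h,T}\in C^{2+}$ uniformly in $t$, which is why the $\operatorname{Lip}^3$ assumption on $V$ matters; and when you carry out the $\mathbb{X}$-level identification for the \emph{first} identity you must track that the Gubinelli derivative of $-DY_\cdot V(x)$ comes from the \emph{second} equation, so the two identities are genuinely a coupled system — it is cleanest to verify both germs simultaneously rather than sequentially.
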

\begin{proof}
    Take the time derivative of
    \begin{align*}
      Y^{t, Y^{-1,t,x}_T}_T = x,
    \end{align*}
    for the first identity and consider
    the enlarged equation
    \begin{align*}
      dZ = G(Z) d\mathbf{\arp}, 
    \end{align*}
    with $G(x_1,x_2) = \left( V(x_1), DV(x_1) x_2 \right)$
    for the second identity.
\end{proof}

\begin{lemma}
  \label{lem:productRuleMeasureValued}
  Let $\alpha \in (1/3,1/2]$ and $\mathbf{W}$ a geometric $\alpha$-H\"older rough path.
  Let $\rho$ be a solution to the forward equation \eqref{eq:forwardRPDE}
  in the sense of Definition \ref{def:forwardSolutionRough};
  in particular $(\rho_t(f), \rho_t(\Gamma f))$ is controlled for every $f \in C^3_b$.
  Assume moreover that for every $R>0$
  \begin{align*}
    \sup_{||f||_{C^3_b(\R^\dode)}< R} \{ ||\rho_\cdot(f), \rho_\cdot( \Gamma f )||_{W,\alpha} \} < \infty.
  \end{align*}

  Let $\phi \in C^{0,3}_b( [0,T] \times \R^\dode )$ be given, satisfiying for $s \le t$
  \begin{align*}
    \phi(t,x) = \phi(s,x) + \int_s^t \alpha_r(x) dr + \int_s^t (\eta_r(x), \eta'_r(x)) d\mathbf{W}_r,
  \end{align*}
  where $(\eta_t(x), \eta'_t(x)) \in \D^{2\alpha}_W$.
  Assume moreover
  \begin{align*}
    \sup_x \left[ ||\eta(x),\eta'(x)||_{W,\alpha} \right] &< \infty, \\
    \sup_x \left[ ||\Gamma_i \phi(x),\Gamma_i \eta(x)||_{W,\alpha} \right] &< \infty, i=1,\dots,\drp.
  \end{align*}
  and $\eta \in C^{0,3}_b( [0,T] \times \R^\dode )$.

  Then $(M,M'), (N,N') \in \D^{2\alpha}_W$ where
  \begin{align*}
    (M_t)_{i=1,\dots,\drp} &:= \rho_t( \Gamma_i \phi_t ) \\
    (M'_t)_{i,j=1,\dots,\drp} &:= \rho_t( \Gamma_j \Gamma_i \phi_t ) + \rho_t( \Gamma_i (\eta_t)_j ) \\
    (N_t)_{i=1,\dots,\drp} &:= \rho_t( (\eta_t)_i ) \\
    (N'_t)_{i,j=1,\dots,\drp} &:= \rho_t( (\eta'_t)_{ij} ) + \rho_t( \Gamma_j (\eta_t)_i ),
  \end{align*}
  and
  \begin{align*}
    \rho_t( \phi_t )
    =  
    \rho_s( \phi_s )
    +
    \int_s^t (M,M')_r d\mathbf{W}_r
    +
    \int_s^t (N,N')_r d\mathbf{W}_r
    +
    \int_s^t \rho_r( \alpha_r + L\phi_r ) dr.
  \end{align*}
\end{lemma}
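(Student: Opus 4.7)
The plan is first to verify that the pairs $(M,M')$ and $(N,N')$ lie in $\D^{2\alpha}_W$, and then to establish the product formula by matching the local expansion of $\rho_v(\phi_v)-\rho_u(\phi_u)$ against the germ of the claimed rough integrals plus the drift, before appealing to the sewing lemma.

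For the controlled-path property of $(M,M')$, I would use the splitting
\begin{align*}
M_t-M_s=\rho_t\bigl(\Gamma_i(\phi_t-\phi_s)\bigr)+\bigl[\rho_t(\Gamma_i\phi_s)-\rho_s(\Gamma_i\phi_s)\bigr].
\end{align*}
The second bracket expands via the forward equation \eqref{eq:roughForwardEquation} applied to the fixed test function $\Gamma_i\phi_s\in C^3_b$, giving leading term $\rho_s(\Gamma_j\Gamma_i\phi_s)W^j_{s,t}$. The first bracket is handled by expanding $\Gamma_i(\phi_t-\phi_s)(x)=\Gamma_i\eta_{j,s}(x)W^j_{s,t}+O(|t-s|^{2\alpha})$ uniformly in $x$ (thanks to the uniform-in-$x$ controlled-path hypothesis on $\Gamma_i\eta$), then integrating against the finite measure $\rho_t$ and replacing $\rho_t\leadsto\rho_s$. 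Together these identify $(M'_s)_{i,j}=\rho_s(\Gamma_j\Gamma_i\phi_s)+\rho_s(\Gamma_i\eta_{j,s})$ modulo $O(|t-s|^{2\alpha})$; the argument for $(N,N')$ is entirely analogous with $\eta_{i,\cdot}$ in place of $\Gamma_i\phi_\cdot$.

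For the identity itself, I would decompose $\rho_v(\phi_v)-\rho_u(\phi_u)=\rho_v(\phi_v-\phi_u)+[\rho_v(\phi_u)-\rho_u(\phi_u)]$ and push each piece to second order in $W$. The second bracket contributes $\rho_u(L\phi_u)(v-u)+\rho_u(\Gamma_k\phi_u)W^k_{u,v}+\rho_u(\Gamma_j\Gamma_k\phi_u)\mathbb{W}^{jk}_{u,v}$ via the forward equation; the first bracket, using the equation for $\phi_v-\phi_u$ pointwise in $x$, contributes $\rho_v(\alpha_u)(v-u)+\rho_v(\eta_{k,u})W^k_{u,v}+\rho_v(\eta'_{kj,u})\mathbb{W}^{jk}_{u,v}$. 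Replacing $\rho_v\leadsto\rho_u$ in the $W^k_{u,v}$ term via the forward equation produces the cross-term $\rho_u(\Gamma_j\eta_{k,u})W^j_{u,v}W^k_{u,v}$, which by geometricity equals $\rho_u(\Gamma_j\eta_{k,u}+\Gamma_k\eta_{j,u})\mathbb{W}^{jk}_{u,v}$ plus a higher-order remainder. Collecting all leading terms yields exactly $\rho_u(L\phi_u+\alpha_u)(v-u)+\rho_u(\Gamma_k\phi_u+\eta_{k,u})W^k_{u,v}+(M'_u+N'_u)_{k,j}\mathbb{W}^{jk}_{u,v}$ with remainder $O(|v-u|^{3\alpha})$. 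Since $3\alpha>1$, the sewing lemma forces the corresponding Riemann sums to converge both to $\rho_t(\phi_t)-\rho_s(\phi_s)$ and to the right-hand side of the claimed formula.

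The main technical obstacle is the production and algebraic identification of the cross-term $\rho_u(\Gamma_j\eta_{k,u})\mathbb{W}^{jk}_{u,v}$, which couples the $W$-roughness of $\rho$ with the $W$-roughness of $\phi$. It requires the forward equation applied to the fixed $C^3_b$ test function $\eta_{k,u}$ to produce a $W$-increment, and the geometric identity $W^j_{u,v}W^k_{u,v}=\mathbb{W}^{jk}_{u,v}+\mathbb{W}^{kj}_{u,v}$ to convert the resulting product of first-order increments into a genuine $\mathbb{W}$ contribution. The uniform-in-$x$ controlled-path hypothesis $\sup_x\|\Gamma_i\phi(x),\Gamma_i\eta(x)\|_{W,\alpha}<\infty$ is what provides the uniform-in-$s$ bounds on the expansion of $\rho_t(\Gamma_i(\phi_t-\phi_s))$ needed both in the controlled-path step and in the sewing step. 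The remaining bookkeeping — verifying all remainders are $O(|v-u|^{3\alpha})$ uniformly in the partition — is routine controlled-path calculus under the uniform bounds built into the hypotheses.
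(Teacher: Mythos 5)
Your argument is correct and follows essentially the same route as the paper's proof: expand $\rho_v(\phi_v)-\rho_u(\phi_u)$ via increment splitting, use the controlled-path hypothesis on $\rho$ together with the local expansion of $\phi$ to obtain a germ in $W_{u,v},\,\mathbb{W}_{u,v}$ whose cross term is converted via the geometric shuffle $W^jW^k=\mathbb{W}^{jk}+\mathbb{W}^{kj}$, and then appeal to the sewing lemma (the paper groups the same pieces via the three-term identity $\rho_s(\phi_{s,t})+\rho_{s,t}(\phi_s)+\rho_{s,t}(\phi_{s,t})$, but that is only cosmetic bookkeeping). One small imprecision worth noting: when you "apply the forward equation to the fixed test function $\Gamma_i\phi_s\in C^3_b$", the function $\Gamma_i\phi_s$ is in general only $C^2_b$; the correct justification is that the uniform controlled-path hypothesis applied with $f=\phi_s\in C^3_b$ already asserts that $\rho_\cdot(\Gamma_i\phi_s)$ is controlled with derivative $\rho_\cdot(\Gamma_j\Gamma_i\phi_s)$.
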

\begin{remark}
  Note that with $(\eta,\eta') \equiv 0$, $\alpha \equiv 0$, $\phi(0,x) = f(x)$, this reduces to \eqref{eq:roughForwardEquation}.
\end{remark}
\begin{proof}
  First
  \begin{align*}
    (M_t)_i - (M_s)_i
    &=
    \rho_t( \Gamma_i \phi_t )
    -
    \rho_s( \Gamma_i \phi_s ) \\
    &=
    \rho_s( \Gamma_i \phi_{s,t} )
    -
    \rho_{s,t}( \Gamma_i \phi_s )
    +
    \rho_{s,t}( \Gamma_i \phi_{s,t} ) \\
    &=
    \rho_s( \Gamma_i (\eta_s)_j ) W^j_{s,t}
    -
    \rho_s( \Gamma_j \Gamma_i \phi_s ) W^j_{s,t}
    +
    O(|t-s|^{2\alpha}).
  \end{align*}
  Here we used that
  by assumption $\Gamma_i \phi_{s,t}(x) = \Gamma_i \eta_j(x) W^j_{s,t} + O(|t-s|^{2\alpha})$, uniformly in $x$
  and that $\rho_{s,t}(\Gamma_i f) = \rho_s( \Gamma_j \Gamma_i f ) W^j_{s,t} + O(|t-s|^{2\alpha})$ uniformly over bounded sets of $f$ in $C^3_b$.
  It follows that $(M,M') \in \D^{2\alpha}_W$.
  And analogously for $(N,N')$:
  \begin{align*}
    (N_t)_i - (N_s)_i
    &=
    \rho_t( (\eta_t)_i )
    -
    \rho_s( (\eta_s)_i ) \\
    &=
    \rho_s( (\eta_{s,t})_i )
    -
    \rho_{s,t}( (\eta_s)_i )
    +
    \rho_{s,t}( (\eta_{s,t})_i ) \\
    &=
    \rho_s( (\eta'_s)_{i,j} W^j_{s,t}
    +
    \rho_s( \Gamma_j (\eta_s)_i ) W^j_{s,t}
    +
    O(|t-s|^{2\alpha}),
  \end{align*}
  since $\eta_t \in C^3_b(\R^\dode)$ uniformly in $t$.

  Now for the integral equality, for simplicity take $L = 0$, $\alpha=0$.
  Then
  \begin{align*}
    &\rho_t(\phi_t)
    -
    \rho_s(\phi_s) \\
    &=
    \rho_s(\phi_{s,t})
    +
    \rho_{s,t}(\phi_s)
    +
    \rho_{s,t}(\phi_{s,t}) \\
    &=
    \rho_s( (\eta_s)_i ) W^i_{s,t} + \rho_t( (\eta'_s)_{ij} ) \mathbb{W}^{ij}_{s,t}
    +
    \rho_s(\Gamma_i \phi_s) W^i_{s,t} + \rho_s( \Gamma_j \Gamma_i \phi_s ) \mathbb{W}^{ij}_{s,t}
    +
    \rho_s( \Gamma_i (\eta_s)_j ) W^i_{s,t} W^j_{s,t} 
    \\ 
    & \qquad\qquad\qquad +
    O(|t-s|^{3\alpha}) \\
    &=
    \rho_s( (\eta_s)_i ) W^i_{s,t}
    +
    \left[ \rho_t( (\eta'_s)_{ij} ) + \rho_s( \Gamma_j (\eta_s)_i ) \right] \mathbb{W}^{ij}_{s,t}
    +
    \rho_s(\Gamma_i \phi_s) W^i_{s,t} +
    \left[ \rho_s( \Gamma_j \Gamma_i \phi_s ) + \rho_s( \Gamma_i (\eta_s)_j ) \right] \mathbb{W}^{ij}_{s,t} \\ 
    & \qquad\qquad\qquad +
    O(|t-s|^{3\alpha}).
  \end{align*}
  Now for every partition $\mathcal{P}$ of $[s,t]$ we have the trivial identity
  \begin{align*}
    \rho_t(\phi_t) -  
    \rho_s(\phi_s)
    =
    \sum_{[u,v] \in \mathcal{P}} \left[ \rho_v(\phi_v) -  \rho_u(\phi_u) \right].
  \end{align*}
  The claimed equality then follows from taking the limit along partitions with mesh-size going to zero.
\end{proof}

\begin{lemma}
  \label{lem:productRuleWeak}
  Let $\mathbf{W}$ be a geometric $\alpha$-H\"older rough path, $\alpha \in (1/3,1/2]$.
  Let $u$ be a weak solution to the backward RPDE \eqref{eq:backwardRPDE}
  in the sense of Definition \ref{def:weakSolutionRough};
  in particular $( \langle u_\cdot, \Gamma^* \phi \rangle, \langle u_\cdot, \Gamma^*\Gamma^* \phi\rangle))$ is controlled for every
  $\phi \in C^3_{\exp}(\R^\dode)$.
  Assume moreover that for every $R>0$
  \begin{align}
    \label{eq:productRuleWeakUniformityOfSolution}
    \sup_{||f||_{C^3_{\exp}(\R^\dode)} < R} \{ ||(\langle u_\cdot, f \rangle, \langle u_\cdot, \Gamma f \rangle)||_{W,\alpha} \} < \infty.
  \end{align}

  Let $\phi \in C^{0,4}_{\exp}([0,T]\times\R^\dode)$ be given
  that satisfies for $s \le t$
  \begin{align*}
    \phi(t,x) = \phi(s,x) + \int_s^t \alpha_r(x) dr + \int_s^t (\eta_r(x), \eta'_r(x)) d\mathbf{W}_r,
  \end{align*}
  where $(\eta_t(x), \eta'_t(x)) \in \D^{2\alpha}_W$.
  Assume moreover
  for some $\delta > 0$
  \begin{align*}
    \begin{split}
      ||\eta(x),\eta'(x)||_{W,\alpha} \lesssim e^{-\delta |x|}, \\
      ||\Gamma^*_i \phi(x),\Gamma^*_i \eta(x)||_{W,\alpha} \lesssim e^{-\delta |x|}, i=1,\dots,\drp.
    \end{split}
  \end{align*}
  In addition assume that $\eta \in C^{0,3}_{\exp}( [0,T] \times \R^\dode )$.

  Then $(M,M'), (N,N') \in \D^{2\alpha}_W$ where
  \begin{align*}
    (M_t)_{i=1,\dots,\drp} &:= \langle u_t, \Gamma^*_i \phi_t \rangle \\
    (M'_t)_{i,j=1,\dots,\drp} &:= \langle u_t, \Gamma^*_j \Gamma^*_i \phi_t \rangle + \langle u_t, \Gamma^*_i (\eta_t)_j \rangle \\
    (N_t)_{i=1,\dots,\drp} &:= \langle u_t, (\eta_t)_i \rangle \\
    (N'_t)_{i,j=1,\dots,\drp} &:= \langle u_t, (\eta'_t)_{ij} \rangle + \langle u_t, \Gamma^*_j (\eta_t)_i \rangle,
  \end{align*}
  and
  \begin{align*}
    \langle u_t, \phi_t \rangle
    =  
    \langle u_s, \phi_s \rangle
    -
    \int_s^t (M,M')_r d\mathbf{W}_r
    +
    \int_s^t (N,N')_r d\mathbf{W}_r
    +
    \int_s^t \langle u_r, \alpha_r - L^* \phi_r \rangle dr.
  \end{align*}
\end{lemma}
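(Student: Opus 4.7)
The plan is to follow almost verbatim the template of Lemma \ref{lem:productRuleMeasureValued}, with three modifications: the signs flip because we work with the backward equation, the admissible test function class is $C^n_{\exp}$ rather than $C^n_b$, and one must check that every pairing $\langle u_r,\cdot\rangle$ appearing is meaningful — boundedness of $u$ must be paired with integrability (exponential decay) of the spatial test object.

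First I would verify that $(M,M'),(N,N')\in\D^{2\alpha}_W$. For $M$, split
\begin{equation*}
M_t - M_s = \langle u_t - u_s, \Gamma^* \phi_s\rangle + \langle u_s, \Gamma^*(\phi_t - \phi_s)\rangle + \langle u_t - u_s, \Gamma^*(\phi_t - \phi_s)\rangle,
\end{equation*}
expand $\phi_t(x)-\phi_s(x)=\eta_s(x)W_{s,t}+O(|t-s|^{2\alpha})$ uniformly in $x$ (the exponential-decay assumption on $(\eta,\eta')$ furnishes an integrable envelope), and apply the weak RPDE \eqref{eq:defAWRPDEsol} together with its built-in controlled-path structure to get, for $\psi\in C^3_{\exp}$,
\begin{equation*}
\langle u_t - u_s, \psi\rangle = -\langle u_s, \Gamma^*\psi\rangle\, W_{s,t} + O(|t-s|^{2\alpha}),
\end{equation*}
with the remainder uniform over bounded sets in $C^3_{\exp}$ thanks to \eqref{eq:productRuleWeakUniformityOfSolution}. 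Collecting terms identifies the Gubinelli derivative $M'$ as stated and gives $\|R^M\|_{2\alpha}<\infty$. An analogous computation (with the roles of $\Gamma^*\phi_s$ and $\eta_s$ exchanged) gives $(N,N')\in\D^{2\alpha}_W$.

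Second, for the integral identity, expand
\begin{equation*}
\langle u_t,\phi_t\rangle-\langle u_s,\phi_s\rangle = \langle u_s,\phi_t-\phi_s\rangle + \langle u_t-u_s,\phi_s\rangle + \langle u_t-u_s,\phi_t-\phi_s\rangle,
\end{equation*}
and expand each piece to second order in $W_{s,t}$ and first order in $t-s$ using the defining equations for $u$ and $\phi$ respectively. For example,
\begin{equation*}
\langle u_s,\phi_t-\phi_s\rangle = \langle u_s,\alpha_s\rangle(t-s) + \langle u_s,(\eta_s)_i\rangle W^i_{s,t} + \langle u_s,(\eta'_s)_{ij}\rangle \mathbb{W}^{ij}_{s,t} + O(|t-s|^{3\alpha}),
\end{equation*}
and similarly for the other two terms, the minus signs in the $L^*$ and $\Gamma^*$ contributions coming from the backward convention. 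Matching these expansions against the identified Gubinelli derivatives $M',N'$ yields a Davie-type expansion
\begin{align*}
\langle u_t,\phi_t\rangle-\langle u_s,\phi_s\rangle &= \langle u_s, \alpha_s - L^*\phi_s\rangle(t-s) \\
&\quad - \bigl[M_s W_{s,t} + M'_s \mathbb{W}_{s,t}\bigr] + \bigl[N_s W_{s,t} + N'_s \mathbb{W}_{s,t}\bigr] + O(|t-s|^{3\alpha}).
\end{align*}
Summing over a partition of $[s,t]$ and passing to mesh zero gives the claimed identity, the two rough integrals against $(M,M')$ and $(N,N')$ emerging as the compensated Riemann-sum limit (cf.\ \cite[Thm.\ 4.10]{FH14}).

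The only place the argument genuinely differs from Lemma \ref{lem:productRuleMeasureValued} is the bookkeeping. The hypothesis $\phi\in C^{0,4}_{\exp}$ combined with $\beta_k\in\Lip^3$ yields $\Gamma^*_i\phi_t,\Gamma^*_j\Gamma^*_i\phi_t\in C^3_{\exp}$ uniformly in $t$, which is exactly what Definition \ref{def:weakSolutionRough} requires in order to feed these into the weak RPDE; similarly the exponential bounds on $\eta,\eta',\Gamma^*\eta$, paired with boundedness of $u_r$, keep every pairing absolutely convergent and uniformly controlled in $r$. The main (though not conceptual) obstacle is precisely this uniformity: keeping track of which test objects lie in $C^3_{\exp}$ and ensuring the $O(|t-s|^{3\alpha})$ remainders are uniform over the partition, so that the sewing lemma legitimately produces the rough integrals. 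The uniformity assumption \eqref{eq:productRuleWeakUniformityOfSolution} is what delivers this.
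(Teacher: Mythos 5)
Your argument is essentially the paper's own: the same three-term product-rule decomposition of $M_t-M_s$ (and of $\langle u_t,\phi_t\rangle-\langle u_s,\phi_s\rangle$), the weak RPDE definition applied to $\varphi=\Gamma_i^*\phi_s\in C^3_{\exp}$, the exponential-decay hypothesis on $(\eta,\eta')$ to furnish a uniform integrable envelope, the uniformity assumption \eqref{eq:productRuleWeakUniformityOfSolution} for the controlled-path bounds, and the partition-sum/sewing step borrowed from Lemma \ref{lem:productRuleMeasureValued} --- which the paper itself invokes with ``the proof now finishes as the preceding one.'' The only small bookkeeping slip is your claim that $\Gamma_j^*\Gamma_i^*\phi_t\in C^3_{\exp}$; two applications of the first-order operator $\Gamma^*$ to $\phi\in C^{0,4}_{\exp}$ only give $C^2_{\exp}$, but this does not matter since the weak-solution definition is only invoked with $\varphi=\Gamma_i^*\phi_t\in C^3_{\exp}$.
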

\begin{proof}
  By assumption
  \begin{align*}
    |\Gamma^*_i \phi_t(x) - \Gamma^*_i \phi_s(x) - \Gamma^*_i \eta_s(x) W_{s,t}|
    \lesssim
    e^{-\delta|x|} |t-s|^{2\alpha}
  \end{align*}
  Hence
  \begin{align*}
    | \langle u_s, \Gamma^*_i \phi_{s,t} \rangle
    -
    \langle u_s, \Gamma^*_i \eta_j \rangle W^j_{s,t}
    |
    \le
    ||u||_\infty
    ||\beta||_{C^2_b(\R^\dode)}
    ||D\phi_t - D\phi_s - D\eta_s W_{s,t}||_{L^1(\R^\dode)}
    \lesssim |t-s|^{2\alpha}.
  \end{align*}
  Moreover
  \begin{align*}
    | \langle u_{s,t}, \Gamma^*_i \phi_s \rangle 
      -
      \langle u_s, \Gamma^*_j \Gamma^*_i \phi_s \rangle W^j_{s,t}
    |
    \lesssim
    |t-s|^{2\alpha},
  \end{align*}
  since $\phi \in C^{0,4}_{\exp}([0,T] \times \R^\dode)$ (and hence $\Gamma^*_j u_t \in C^{3}_{\exp}(\R^\dode)$ uniformly in $t$) and since $u$ satisfies the uniform bound
  \eqref{eq:productRuleWeakUniformityOfSolution}.
  Then also
  $\langle u_{s,t}, \Gamma^*_i \phi_{s,t} \rangle \lesssim |t-s|^{2\alpha}$ and hence
  \begin{align*}
    (M_t)_i - (M_s)_i
    &=
    \langle u_t, \Gamma^*_j \Gamma^*_i \phi_t \rangle W^j_{s,t}
    +
    \langle u_t, \Gamma^*_i (\eta_t)_j \rangle W^j_{s,t}
    +
    O(|t-s|^{2\alpha}),
  \end{align*}
  hence $(M,M')$ is controlled.
  The argument for $(N,N')$ is similar and
  the proof now finishes as the preceding one.
\end{proof}

\subsection{Rough SDEs}

\begin{lemma}
  \label{lem:jointLift}
  For $\mathbf{W}$ a geometric $\alpha$-H\"older rough path, $\alpha \in (1/3,1/2]$ and a Brownian motion $B$, define $\mathbf{Z} = (Z, \mathbb{Z})$ as
  \begin{equation*}
    Z_{t}=\left( 
    \begin{array}{c}
    B_{t} \\ 
    W_{t}%
    \end{array}%
    \right) ,\,\,\,\mathbb{Z}_{s,t}=\left( 
    \begin{array}{ll}
    \mathbb{B}_{s,t}^{Ito} & \int W\otimes dB \\ 
    \int_{s}^{t}B_{s,t}\otimes dW & \mathbb{W}%
    \end{array}%
    \right)
  \end{equation*}%
  Then
  \begin{enumerate}[(i)]
    \item $\mathbf{Z}$ is well-defined and, almost surely, an $\alpha$-H\"older rough path

    \item
    \begin{equation*}
    \left\vert \rho_{\alpha}
    \left(
    \mathbf{Z}(\mathbf{W}), \mathbf{Z}(\mathbf{\tilde{W}})
    \right)
    \right\vert
    _{L^{q}}\lesssim \rho _{\alpha }\left( \mathbf{W,\tilde{W}}\right) .
    \end{equation*}%

    \item $N_{1;[0,T]}( ||\mathbf{Z}||_p^p )$ has Gaussian tails, uniformly over $\mathbf{W}$ bounded,
    for all $p = \frac{1}{\alpha}$.
  \end{enumerate}
\end{lemma}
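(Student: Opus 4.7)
I would construct $\mathbf{Z}$ component by component. The diagonal entries $\mathbb{B}^{\mathrm{Ito}}$ and $\mathbb{W}$ are already known $\alpha$-H\"older rough path components. Define the off-diagonal block entry-wise via the It\^o integral
\begin{equation*}
\Bigl( \int_s^t W_{s,r} \otimes dB_r \Bigr)^{ij} := \int_s^t W^i_{s,r}\, dB^j_r,
\end{equation*}
which is well-defined since $W$ is continuous and deterministic. The opposite cross-block is then \emph{defined} by the integration-by-parts identity
\begin{equation*}
\int_s^t B_{s,r} \otimes dW_r := B_{s,t} \otimes W_{s,t} - \Bigl( \int_s^t W_{s,r} \otimes dB_r \Bigr)^{T},
\end{equation*}
and a direct expansion on a partition $s<u<t$ shows that Chen's relation for $\mathbf{Z}$ reduces to Chen's relations for $\mathbf{B}$, $\mathbf{W}$, and the additivity of the It\^o integral in the interval. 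For H\"older regularity, It\^o's isometry gives
\begin{equation*}
\E\left| \int_s^t W^i_{s,r}\, dB^j_r \right|^2 = \int_s^t |W^i_{s,r}|^2\, dr \lesssim ||W||_\alpha^2 |t-s|^{2\alpha+1},
\end{equation*}
and BDG plus Gaussian hypercontractivity lift this to $L^q$ for any $q \ge 2$. Kolmogorov's criterion for rough paths, applied with $q$ large enough that $\alpha + 1/2 - 1/q > 2\alpha$, then delivers almost-sure $\alpha$-H\"older regularity of $\mathbf{Z}$.

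\textbf{Plan for part (ii).} For fixed $B$ the construction is affine in the $\mathbf{W}$-data, and the only random component of $\mathbf{Z}(\mathbf{W}) - \mathbf{Z}(\tilde{\mathbf{W}})$ is the It\^o integral against $B$ of the \emph{deterministic} difference $W - \tilde W$. The same It\^o-isometry, BDG and hypercontractivity chain gives
\begin{equation*}
\left\| \int_s^t (W - \tilde W)^i_{s,r}\, dB^j_r \right\|_{L^q} \lesssim ||W - \tilde W||_\alpha\, |t-s|^{\alpha + 1/2} \lesssim \rho_\alpha(\mathbf{W},\tilde{\mathbf{W}})\, |t-s|^{\alpha + 1/2},
\end{equation*}
and standard rough-path Kolmogorov estimates (as in \cite[Ch.3]{FH14}), together with the trivial linear bounds on the non-It\^o components, then yield the claimed $L^q$ bound on $\rho_\alpha(\mathbf{Z}, \tilde{\mathbf{Z}})$.

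\textbf{Plan for part (iii), and the expected main obstacle.} This is the most delicate step. My plan combines three ingredients. First, for the Brownian rough path alone, $N_{1;[0,T]}(||\mathbf{B}||_p^p)$ has Gaussian tails by \cite{CLL13,bib:frizRiedel}. Second, on each sub-interval one has a decomposition of the shape
\begin{equation*}
||\mathbf{Z}||_{p\text{-var};[s,t]}^p \lesssim ||\mathbf{B}||_{p\text{-var};[s,t]}^p + ||\mathbf{W}||_{p\text{-var};[s,t]}^p + \Xi_{s,t},
\end{equation*}
where $\Xi_{s,t}$ is the cross-integral contribution, together with the elementary sub-additivity $N_a(\omega_1 + \omega_2) \le N_{a/2}(\omega_1) + N_{a/2}(\omega_2) + 1$ for controls, which reduces the question to bounds on $N$ for each of the three summands; the deterministic $||\mathbf{W}||$-term is uniformly bounded whenever $||\mathbf{W}||_\alpha \le R$. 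Third, $\Xi_{s,t}$ is built from variables in the first Wiener chaos of $B$ with variance of order $||\mathbf{W}||_\alpha^2 |t-s|^{2\alpha+1}$, so Borell's concentration inequality applied to the associated Gaussian $p$-variation functional yields Gaussian tails, \emph{uniformly} on $\{||\mathbf{W}||_\alpha \le R\}$. The main obstacle I anticipate is gluing these three pieces into a single Gaussian tail bound on $N_{1;[0,T]}(\mathbf{Z})$ that is really uniform in bounded $\mathbf{W}$: it requires careful bookkeeping of how $||\mathbf{W}||_\alpha$ enters the variances of the chaos variables, and matching the thresholds in the sub-additivity of $N$ to the various Gaussian concentration scales.
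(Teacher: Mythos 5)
The paper's own proof of this lemma is essentially a citation to \cite{DOR}, with a single additional observation: to upgrade the regularity from ``$\alpha'$-H\"older for every $\alpha'<\alpha$'' (as in \cite{DOR}) to genuinely $\alpha$-H\"older, one applies the Kolmogorov criterion only to the second level and treats the first level trivially. Your parts (i) and (ii) do exactly this from scratch and do it correctly: you build the cross-block by It\^o integration and integration by parts, verify Chen via interval-additivity, estimate the cross-integrals via It\^o isometry / BDG / hypercontractivity, and you explicitly set up Kolmogorov with the exponent $\alpha + 1/2 - 1/q > 2\alpha$, which is precisely the ``apply Kolmogorov only to the second level'' improvement the paper highlights. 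Your part (ii) correctly isolates the It\^o integral against $W - \tilde W$ as the only random contribution. So (i) and (ii) are sound and, modulo the paper offloading almost everything to \cite{DOR}, match the intended argument.

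Part (iii), however, has a genuine gap, and it sits exactly where you flagged the ``main obstacle'', but it is worse than a bookkeeping issue. Your plan is to control the cross-term contribution by applying Borell's inequality to the $p$-variation functional $\|\Xi\|_{p\text{-var}}$ and then transfer to $N_{1;[0,T]}$. Borell/Borell--TIS does give Gaussian tails for $\|\Xi\|_{p\text{-var}}$ (it is Lipschitz in the Cameron--Martin norm of $B$ with constant of order $\|W\|_\alpha$), but $N_1$ has \emph{strictly better} tails than what this yields: the only elementary inequality available is $N_{1;[0,T]} \le \|\cdot\|_{p\text{-var}}^p$, which converts a Gaussian tail on the $p$-variation into a tail of order $\exp(-c\,n^{2/p})$ for $N_1$, with $2/p < 2$; this is a Weibull tail, not a Gaussian one, and it is not enough to produce the exponential moments $\E[\exp(C N_1)]$ used repeatedly elsewhere in the paper (e.g.\ in Lemma \ref{lem:flowWithDeterminant} and Lemma \ref{lem:expDecaySurvivesFeynmanKac}). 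Obtaining genuinely Gaussian tails for $N_1$ is precisely the content of the Cass--Litterer--Lyons argument (\cite{CLL13}, \cite{bib:frizRiedel}, and the generalized Fernique theorem of \cite{FO10}): one shows that the event $\{N_1 > n\}$ is contained, after a Cameron--Martin translation of controlled norm $\lesssim \sqrt{n}$, in a fixed set of positive measure, and then invokes Gaussian isoperimetry. To make your decomposition work you would have to run this greedy/Cameron--Martin argument for the joint process $(B, \int W\otimes dB)$ (noting that, for fixed deterministic $W$, this is a Gaussian process whose Cameron--Martin space is controlled by that of $B$, uniformly over $\|\mathbf{W}\|_\alpha \le R$), and then combine with the sub-additivity of $N$ that you state. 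That is exactly what \cite{DOR} does; Borell on the $p$-variation alone does not close the gap.
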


\begin{proof}
  This is proven in \cite{DOR},
  the only difference being that there, $\mathbf{Z}$ is only shown to be an $\alpha'$-H\"older rough path,
  for $\alpha' < \alpha$.
  This stems from the fact, that there, a Kolmogorov-type argument
  is applied to the \emph{whole} rough path $\mathbf{Z}$, which in particular contains
  the deterministic path $W$, which explains the decay in perceived regularity.
  
  Being more careful, and
  applying a Kolmogorov-type argument (e.g. Theorem 3.1 in \cite{FH14}) only to the second level, one
  sees that it is actually $\beta$-H\"older continuous, for $\beta < \alpha + 1/2$.
  The first level is trivially $\alpha$-H\"older continuous.
  The claimed continuity in $\mathbf{W}$ is then improved similarly.
\end{proof}

\begin{lemma}[Rough SDE]
  \label{lem:roughSDE}
  Le $\mathbf{W}$ be a geometric $\alpha$-H\"older rough path, $\alpha \in (1/3,1/2]$ and let $\mathbf{Z} = (Z, \mathbb{Z})$ 
  be the joint lift of $\mathbf{W}$ and a Brownian motion $B$, given in the previous Lemma \ref{lem:jointLift}.
  Assume $\sigma_i, \beta_j \in \Lip^3(\R^\dode), i=1,\dots,\dbm, j = 1,\dots, \drp$,
  $b \in \Lip^{1}(\R^\dode)$.
  Let $X = X(\omega)$ be the solution to the rough differential equation
  \begin{align*}
    dX = b(X) dt + V(X) d\mathbf{Z}, 
  \end{align*}
  where $V = (\sigma, \beta)$.
  Then $X$ formally solves the \emph{rough SDE}
  \begin{align*}
    dX = b(X) dt + \sigma(X) dB + \beta(X) d\mathbf{W}.
  \end{align*}

  We have the following properties:
  \begin{itemize}
    \item For all $p \ge 1$, the mapping
      \begin{align*}
        \Cr^\alpha &\to \mathcal{S}^p \\
        \mathbf{W} &\mapsto X,
      \end{align*}
      is locally uniformly continuous.
      Here $||X||_{\mathcal{S}^p}^p := \E[ \sup_{t \le T} |X_t|^p ]$.
      Moreover for every $R>0$ there is $\delta > 0$ such that
      \begin{align*}
        \sup_{||\mathbf{W}||_\alpha < R} \E[ \exp( \delta |X^{\mathbf{W}}|_\infty^2 ) ] < \infty.
      \end{align*}

      If in addition, for $n\ge0$,
      $\sigma_i, \beta_j \in \Lip^{\eta + n}, b \in \Lip^{1+\varepsilon+n}$,
      then the same holds true for $D^n X$.
    %
    \item For $\mathbf{W}$ the canonical lift of a smooth path $W$, $X$ coincides with the classical SDE solution to
      \begin{align}
        \label{preroughSDE}
        dX_t = b(X_t) dt + \sigma(X_t) dB + \beta(X_t) \dot W_t dt.
      \end{align}
    \item
      Let $c \in \Lip^1(\R^\dode)$, $\gamma \in \Lip^{\eta}(\R^\dode)$ and $g \in \Lip^1(\R^\dode)$, then
      $\int \gamma(X_s) d\mathbf{W}_s$ is a well-defined rough integral, and moreover, for all $p \ge 1$
      \begin{align*}
        [0,T] \times \R^d \times \Cr^\alpha &\to L^p \\
        (t,x,\mathbf{W}) &\mapsto g( X^{t,x}_T ) \exp\left( \int_t^T c(X^{t,x}_r) dr + \int_t^T \gamma(X^{t,x}_r) d\mathbf{W}_r \right),
      \end{align*}
      is continuous, uniformly in $t,x$ and locally uniformly in $\mathbf{W}$.
  \end{itemize}

\end{lemma}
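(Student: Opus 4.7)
The plan is to reduce all claims to standard RDE theory applied to the solution $X$ of the RDE $dX = b(X)\,dt + V(X)\,d\mathbf{Z}$, driven by the joint lift $\mathbf{Z} = \mathbf{Z}(\mathbf{W})$ of $\mathbf{W}$ and the Brownian motion $B$ constructed in Lemma \ref{lem:jointLift}. Since $V = (\sigma, \beta) \in \Lip^3$ and $b \in \Lip^1$ (the drift being absorbed by adjoining a time coordinate to $\mathbf{Z}$), existence, uniqueness and local Lipschitz continuity of the Itô--Lyons map $\mathbf{Z} \mapsto X$ in $\alpha$-Hölder rough path topology follow from \cite[Theorem 10.26]{friz-victoir-book}. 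Composing with Lemma \ref{lem:jointLift}(ii), which controls $\rho_\alpha(\mathbf{Z}(\mathbf{W}), \mathbf{Z}(\tilde{\mathbf{W}}))$ in $L^q$ by $\rho_\alpha(\mathbf{W}, \tilde{\mathbf{W}})$, one obtains the claimed $\mathcal{S}^p$-continuity of $\mathbf{W} \mapsto X$.

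The sub-Gaussian integrability $\E[\exp(\delta |X^\mathbf{W}|_\infty^2)] < \infty$, uniformly over $\|\mathbf{W}\|_\alpha < R$, comes from combining the pathwise bound $|X|_\infty \le C(1 + N_{1;[0,T]}(\mathbf{Z}))$ of Lemma \ref{lem:boundedRDEs} with the uniformly Gaussian tails of $N_{1;[0,T]}(\mathbf{Z})$ over bounded $\mathbf{W}$ given in Lemma \ref{lem:jointLift}(iii). For the higher derivatives $D^n X$, I would formulate the standard Jacobian RDE system enlarging $X$ by its derivatives up to order $n$; the enlarged vector fields are $\Lip^3$ whenever $V$ and $b$ enjoy the additional regularity assumed, so the same existence, continuity and integrability arguments apply verbatim to $D^n X$.

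For the consistency with the classical SDE \eqref{preroughSDE} when $\mathbf{W}$ is the canonical lift of smooth $W$: here the key point is that $\mathbf{Z}$ contains the \emph{Itô} iterated integral $\mathbb{B}^{Ito}$ in its Brownian-Brownian block, while the off-diagonal blocks reduce to Riemann--Stieltjes integrals since $W$ is smooth. Under these choices the Davie-style expansion defining the RDE solution matches, term-by-term, the Itô--Taylor expansion of the SDE solution (with $\dot W \, dt$ appearing as a smooth drift contribution), and uniqueness for both equations closes the identification.

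Finally, for the Feynman--Kac integrand: since $X$ is controlled by $\mathbf{Z}$ with Gubinelli derivative $V(X) = (\sigma(X), \beta(X))$, the composition $\gamma(X)$ is controlled by $W$ with Gubinelli derivative $D\gamma(X)\beta(X)$ via Lemma \ref{lem:applicationOfSmoothFunction}, so $\int_t^T \gamma(X^{t,x}_r)\, d\mathbf{W}_r$ is a bona-fide rough integral. The joint $L^p$-continuity in $(t, x, \mathbf{W})$, uniformly in $(t,x)$ and locally uniformly in $\mathbf{W}$, follows from continuity of the rough integral as a function of its controlled-path integrand, continuity of $X$ established above, and the sub-Gaussian exponential integrability which provides the uniform integrability needed to pass from pathwise to $L^p$ convergence. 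The main obstacle is the consistency step: identifying the RDE driven by $\mathbf{Z}$ with the Itô SDE requires carefully tracking which iterated integrals live in each coordinate block, since $\mathbf{Z}$ is neither purely geometric nor a pure Itô rough path, and any mismatch would introduce spurious Stratonovich correction terms.
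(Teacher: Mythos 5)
Your overall plan -- recast the equation as an RDE driven by the joint lift $\mathbf{Z}$, run standard RDE continuity/integrability through Lemma \ref{lem:jointLift} and Lemma \ref{lem:boundedRDEs}, then treat consistency and the Feynman--Kac integrand -- is reasonable and captures the ideas that the paper simply delegates to \cite[Theorem 10]{DOR}, with the case $b \in \Lip^1$ handled via the RDE-with-drift result \cite[Proposition 3]{FO09}. However, there are two genuine gaps in the reconstruction.

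First, the drift treatment. You propose absorbing the $b(X)\,dt$ term ``by adjoining a time coordinate to $\mathbf{Z}$'' and then invoking \cite[Theorem 10.26]{friz-victoir-book}. That theorem requires all the vector fields of the enlarged system to be $\Lip^\gamma$ for $\gamma > 1/\alpha > 2$, i.e. roughly $\Lip^3$. But the lemma only assumes $b \in \Lip^1$ (and for the $D^n X$ statement, $b \in \Lip^{1+\varepsilon+n}$, two degrees below what the rough vector fields need). The low regularity on $b$ is not incidental: the paper explicitly invokes a dedicated ``RDE with drift'' result (\cite[Proposition 3]{FO09}) precisely because the drift term enters against $dt$, a smooth driver, and therefore requires fewer derivatives than a genuine rough-driver vector field. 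Your route, as written, requires $b$ to be as smooth as $\sigma,\beta$, which is not what is assumed.

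Second, the controlledness of $\gamma(X)$. You claim ``$\gamma(X)$ is controlled by $W$ with Gubinelli derivative $D\gamma(X)\beta(X)$''. This is false as stated: $X$ is controlled by the full path $Z = (B, W)$, so the increment $\gamma(X)_{s,t}$ carries a $D\gamma(X_s)\sigma(X_s)B_{s,t}$ term, which is only $O(|t-s|^{\alpha})$ -- not $O(|t-s|^{2\alpha})$ -- and hence cannot be absorbed into a remainder for a $W$-controlled decomposition. What is true is that $\gamma(X) \in \D^{2\alpha}_Z$, and $\int \gamma(X)\,d\mathbf{W}$ must be understood as the $W$-block of the rough integral $\int \gamma(X)\,d\mathbf{Z}$; its compensated Riemann sum then involves the cross-iterated integrals $\int B \otimes dW$ in $\mathbb{Z}$, not only $\mathbb{W}$. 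This is precisely why the joint lift $\mathbf{Z}$, and not $\mathbf{W}$ alone, is the object one must work with; collapsing this to a $W$-controlled integral would lose exactly the mixed second-order terms that make the hybrid It\^o/rough formulation consistent.
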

\begin{proof}
  If $b \in C^{1+\varepsilon}_b(\R^\dode)$, some $\varepsilon > 0$
  this is shown in \cite[Theorem 10]{DOR}.
  Now, for $b \in C^1_b(\R^\dode)$ the same proof works,
  one just needs to use the improved result
  on RDEs with drift in \cite[Proposition 3]{FO09}.
\end{proof}

\begin{lemma}
  \label{lem:expDecaySurvivesFeynmanKac}
  Let $\mathbf{W}$ be a geometric $\alpha$-H\"older rough path, $\alpha \in (1/3,1/2]$.
  Let $X^{t,x}$ be the solution to the rough SDE (Lemma \ref{lem:roughSDE})
  \begin{align*}
    dX_t = b(X_t) dt + \sigma(X_t) dB + \beta(X_t) d\mathbf{W}, \qquad
    X^{t,x}_t = x.
  \end{align*}
  Let $n\ge 0$ and
  assume 
  $c \in \Lip^{n}(\R^\dode), \gamma_k \in \Lip^{ 2 + n }(\R^\dode)$,
  $\sigma_i, \beta_j \in \Lip^{3 + [(n-1) \vee 0 ]}(\R^\dode)$,
  $b \in \Lip^{1 + [(n-1) \vee 0 ]}$.
  Then for every $\phi \in C^n_{\exp}(\R^\dode)$ the function
  \begin{align*}
    \psi(x) := \E[ \phi(X^{t,x}_T) \exp\left( \int_t^T c\left( X^{t,x}_r \right) dr +\int_t^T \gamma \left( X^{t,x}_r \right) d\mathbf{W}_r \right) ],
  \end{align*}
  is again in $C^n_{\exp}(\R^\dode)$, with $||\psi||_{C^n_{\exp}(\R^\dode)}$
  bounded uniformly for $t\le T$ and $||\mathbf{W}||_\alpha$ bounded.
\end{lemma}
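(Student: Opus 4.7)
The plan is to establish the two properties of $\psi$ separately: boundedness of derivatives up to order $n$, and uniform exponential decay of each such derivative.

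First I would differentiate $\psi$ up to order $n$ under the expectation. Writing
\[
\psi(x)=\E\bigl[\phi(X^{t,x}_T)\,E(x)\bigr],\qquad E(x):=\exp\!\Bigl(\int_t^T c(X^{t,x}_r)dr+\int_t^T \gamma(X^{t,x}_r)\,d\mathbf{W}_r\Bigr),
\]
the chain and product rules turn $D^k\psi(x)$ ($k\le n$) into a finite sum of terms
\[
\E\bigl[(D^{j_0}\phi)(X^{t,x}_T)\,E(x)\,P(x)\bigr],\qquad j_0\le k,
\]
where each $P(x)$ is a polynomial in components of the flow derivatives $D^i X^{t,x}$ ($i\le k$) and in the $(k-j_0)$-th derivatives of the two integrals appearing in $E$. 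The regularity hypotheses on $\sigma,\beta,b,c,\gamma$ are calibrated so that Lemma \ref{lem:roughSDE} provides $D^i X^{t,x}\in L^p$ for every $p$ and every $i\le n$, uniformly in $(x,t)$ and in bounded subsets of the rough-path space. Together with the boundedness of $c,\gamma$ this yields $\|E\cdot P\|_{L^2}\le C$, uniformly in $(x,t,\mathbf{W})$ on bounded rough-path sets, which simultaneously justifies the differentiation and bounds the derivatives.

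Next I would extract exponential decay. Setting $c_\phi:=\|\phi\|_{C^n_{\exp}}$ and using $|D^{j_0}\phi(y)|\le c_\phi\, e^{-|y|/c_\phi}$, the Cauchy--Schwarz inequality gives
\[
\bigl|\E[(D^{j_0}\phi)(X^{t,x}_T)\,E\,P]\bigr|\le c_\phi\,\E\bigl[e^{-2|X^{t,x}_T|/c_\phi}\bigr]^{1/2}\E[|EP|^2]^{1/2}.
\]
The inequality $|X^{t,x}_T|\ge |x|-|X^{t,x}_T-x|$ reduces matters to bounding $\E\bigl[e^{q|X^{t,x}_T-x|}\bigr]$ uniformly in $(x,t,\mathbf{W})$ for every fixed $q>0$. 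Since $b$ and $\beta$ are bounded, the drift $\int_t^T b(X^{t,x}_r)dr$ is controlled by $T\|b\|_\infty$, and the rough integral $\int_t^T \beta(X^{t,x}_r)d\mathbf{W}_r$ is bounded deterministically by a polynomial in $\|\mathbf{W}\|_\alpha$ via Lemma \ref{lem:boundedRDEs}. The only stochastic contribution to $X^{t,x}_T-x$ is then the It\^o integral $\int_t^T \sigma(X^{t,x}_r)dB_r$, a continuous martingale of quadratic variation bounded by $T\|\sigma\|_\infty^2$, hence with Gaussian tails by BDG or an exponential-martingale argument. Combining these estimates yields $|D^k\psi(x)|\le c\,e^{-|x|/c}$ for every $k\le n$, with $c$ depending only on $c_\phi$, the coefficient norms and $\|\mathbf{W}\|_\alpha$; enlarging $c$ once to accommodate all $k\le n$ gives the claimed $\|\psi\|_{C^n_{\exp}}$ bound.

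The step requiring the most care is justifying differentiation through the rough integral $\int_t^T \gamma(X^{t,x}_r)d\mathbf{W}_r$, i.e.\ producing the formula $D_x\!\int \gamma(X^{t,x})d\mathbf{W}=\int D\gamma(X^{t,x})\,DX^{t,x}\,d\mathbf{W}$ (and its higher analogues) together with uniform $L^p$ control. The standard remedy is to enlarge the rough SDE: treat $(X^{t,x},DX^{t,x},\dots,D^n X^{t,x})$ as the solution to a single RDE driven by $\mathbf{Z}$ whose vector fields are built from $b,\sigma,\beta$ and their derivatives---exactly the components our $\Lip$ assumptions allow---and invoke Lemma \ref{lem:roughSDE} for this joint system. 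The derivatives of the exponent in $E(x)$ are then continuous functionals of the joint flow, the differentiation is justified by dominated convergence on this enlarged system, and the uniform $L^p$ bounds needed in the first paragraph follow directly from the moment estimates supplied by Lemma \ref{lem:roughSDE}.
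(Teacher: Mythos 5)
Your overall scaffolding---differentiate under the expectation, use Cauchy--Schwarz, and reduce to a uniform exponential moment bound on $|X^{t,x}_T-x|$---matches the paper's (very terse) argument. But the step you single out as the crux of the exponential-decay part is exactly where the argument breaks.

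You claim the rough integral $\int_t^T \beta(X^{t,x}_r)\,d\mathbf{W}_r$ is ``bounded deterministically by a polynomial in $\|\mathbf{W}\|_\alpha$ via Lemma~\ref{lem:boundedRDEs}''. That is not what the lemma gives, nor is it true. The integrand $\beta(X^{t,x}_\cdot)$ depends on the Brownian motion through the hybrid solution $X$, and its controlled-path structure (Gubinelli derivative and remainder) is random. The path-level bound \eqref{eq:controlBoundBadVF} is stated in terms of the \emph{driving} rough path, which in Lemma~\ref{lem:roughSDE} is the \emph{joint lift} $\mathbf{Z}$, not $\mathbf{W}$ alone; so at best you would get a random bound $C(1+\|\mathbf{Z}\|_\alpha)^3$, and $\exp\bigl(q\,(1+\|\mathbf{Z}\|_\alpha)^3\bigr)$ is not integrable. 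Splitting $X_T-x$ into drift $+$ It\^o $+$ rough-integral parts therefore does not isolate a deterministic piece; the attempted reduction to a bounded martingale plus a deterministic remainder fails.

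The paper sidesteps this entirely: it bounds the \emph{whole} increment via \eqref{eq:infinityBoundBoundedVF}, i.e.\ $\|X\|_{1/\alpha\text{-var}} \le C\bigl(1+N_{1;[0,T]}(\mathbf{Z})\bigr)$, a \emph{linear} bound in $N_1(\mathbf{Z})$, and then invokes Lemma~\ref{lem:jointLift}~(iii): $N_{1;[0,T]}(\mathbf{Z})$ has Gaussian tails uniformly over bounded $\|\mathbf{W}\|_\alpha$, so $\E[\exp(C\,N_1(\mathbf{Z}))]<\infty$. The same device is also what makes $\E[\,|E\cdot P|^2\,]$ finite in your first paragraph: the rough integral $\int\gamma(X)\,d\mathbf{W}$ inside the exponent must likewise be bounded by $C(1+N_1(\mathbf{Z}))$, not by boundedness of $\gamma$ alone. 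Your proposal tacitly relies on this but never names the key integrability input, so it does not contain the idea that actually closes the estimate.
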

\begin{proof}
  For $n=0$, let $C_1 > 0$ such that $|\psi(x)| \le C_1 \exp( - \frac{1}{C_1} |x| )$.
  Then
  \begin{align*}
    |\psi(x)|
    &= \E[ \phi(X^{t,x}_T) \exp\left( \int_t^T c\left( X^{t,x}_r \right) dr +\int_t^T \gamma \left( X^{t,x}_r \right) d\mathbf{W}_r \right) ] \\
    &\le C_1 \E[ \exp( -\frac{1}{C_1} |X^{t,x}_T| ) \exp( \dots ) ] \\
    &\le C_1 \exp( -\frac{1}{C_1} x ) \E[ \exp( \frac{1}{C_1} |X^{t,x}_T - x| ) \exp( \dots ) ] \\
    &\le C_1 \exp( -\frac{1}{C_1} x ) \E[ \exp( C_2 \left( 1 + N_{1,[t,T]}(\mathbf{Z}) \right) + T ||c||_\infty ) ] \\
    &\le C_1 \exp( -\frac{1}{C_1} x ) \E[ \exp( C_3 N_{1,[0,T]}(\mathbf{Z}) ) ],
  \end{align*}
  where we used \eqref{eq:infinityBoundBoundedVF} for the 4th line
  This concludes the argument, since the expectation is finite by Lemma \ref{lem:jointLift}, uniformly for $||\mathbf{W}||_\alpha$ bounded.
  The case $n\ge1$ follows similarly, by differentiating under the expectation.
\end{proof}

\begin{lemma}
  \label{lem:expDecaySurvivesFeynmanKacII}
  Let $\mathbf{W}$ be a geometric $\alpha$-H\"older rough path, $\alpha \in (1/3,1/2]$.
  Assume $\sigma_i, \beta_j \in \Lip^{3}(\R^\dode), i=1,\dots,\dbm,j=1,\dots,\drp$,
  $b \in \Lip^1$.
  Let $X^{t,x}$ be the solution to the rough SDE (Lemma \ref{lem:roughSDE})
  \begin{align*}
    dX = \sigma(X) dB + \beta(X) \mathbf{W}.
  \end{align*}
  Let $n\ge 0$.
  For every $\varphi \in C^n_{\exp}(\R^\dode)$, any $q\ge 1$ the function
  \begin{align*}
    \psi(x) := \E[ ||\varphi||_{C^n_b( M(x) ) }^q ],
  \end{align*}
  is in $C^0_{\exp}$.
  Here $M(y) := \{ x: \inf_{r \in [t,T]} |X^{t,x}_r| - 1 \le |x| \le \sup_{r \in [t,T]} |X^{t,x}_r| + 1 \}$.
\end{lemma}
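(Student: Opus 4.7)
The strategy is to reduce the statement to the exponential decay of $\varphi$ combined with a quantitative oscillation bound on the rough SDE flow $X^{t,x}$, and then invoke the Gaussian integrability of $N_{1;[0,T]}(\mathbf{Z})$ from Lemma \ref{lem:jointLift}.

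First, I would unravel the definition. Since $\varphi \in C^n_{\exp}(\R^\dode)$, there is $c>0$ with $|D^k\varphi(y)| \le c\, e^{-|y|/c}$ for $k=0,\dots,n$ and all $y$. Therefore, for any bounded set $M \subset \R^\dode$,
\begin{equation*}
    ||\varphi||_{C^n_b(M)}
    \;\le\; (n+1)\, c \, \exp\!\bigl(-\tfrac{1}{c} \inf_{y\in M} |y|\bigr).
\end{equation*}
Applying this with $M=M(x)$ and using the definition of $M(x)$, I would bound
\begin{equation*}
    \inf_{y \in M(x)} |y| \;\ge\; \inf_{r\in[t,T]} |X^{t,x}_r| - 1 \;\ge\; |x| - \sup_{r\in[t,T]} |X^{t,x}_r - x| - 1.
\end{equation*}

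The next step is to control the oscillation $\sup_{r\in[t,T]} |X^{t,x}_r - x|$. Since $X^{t,x}$ solves the rough SDE driven by the joint lift $\mathbf{Z}=(Z,\mathbb{Z})$ with vector fields in $\Lip^3$ (plus a bounded drift), Lemma \ref{lem:boundedRDEs}, specifically the estimate \eqref{eq:infinityBoundBoundedVF}, gives, uniformly in the starting point $x$,
\begin{equation*}
    \sup_{r\in[t,T]} |X^{t,x}_r - x| \;\le\; C\bigl(1 + N_{1;[0,T]}(\mathbf{Z})\bigr),
\end{equation*}
for a constant $C=C(\sigma,\beta,b,T)$ independent of $x$. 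Combining this with the previous display yields the pointwise bound
\begin{equation*}
    ||\varphi||_{C^n_b(M(x))}^q \;\le\; C'\, \exp\!\bigl(-\tfrac{q}{c}\,|x|\bigr)\; \exp\!\bigl( C''\,(1+N_{1;[0,T]}(\mathbf{Z}))\bigr),
\end{equation*}
for constants $C',C''$ depending only on $c,n,q,\sigma,\beta,b,T$.

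Taking expectations and invoking Lemma \ref{lem:jointLift} (iii), which provides Gaussian tails for $N_{1;[0,T]}(\mathbf{Z})$ (uniformly over bounded sets of $\mathbf{W}$), gives
\begin{equation*}
    \psi(x) \;=\; \E\bigl[\,||\varphi||_{C^n_b(M(x))}^q\,\bigr] \;\le\; \widetilde{C}\, e^{-|x|/\widetilde{C}},
\end{equation*}
which is the desired exponential decay. For the continuity assertion (the $C^0$ part of $C^0_{\exp}$), I would note that for each fixed $\omega$, the flow $x \mapsto X^{t,x}_\cdot(\omega)$ is continuous in the uniform topology (by standard RDE continuity, cf. Lemma \ref{lem:roughSDE}), hence both $\inf_{r}|X^{t,x}_r|$ and $\sup_{r}|X^{t,x}_r|$ are continuous in $x$, so that $x\mapsto ||\varphi||_{C^n_b(M(x))}$ is continuous almost surely. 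The integrable majorant derived above (independent of $x$ on bounded sets, up to a fixed exponential factor) allows to apply dominated convergence to conclude that $\psi$ is continuous. The main obstacle is really bookkeeping the polynomial and exponential factors in $\mathbf{Z}$ in such a way that the Gaussian tail of $N_{1;[0,T]}(\mathbf{Z})$ absorbs them; once the oscillation estimate \eqref{eq:infinityBoundBoundedVF} is in place, the remainder is a direct computation.
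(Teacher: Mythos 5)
Your proof is correct and follows essentially the same route as the paper's (which gives only the pointwise bound $\|\varphi\|_{L^\infty(M(x))}\le C\,e^{-\delta|x|}\exp(\delta\sup_{r\in[t,T]}|X^{t,x}_r-x|)$ and leaves the rest implicit). You fill in the remaining steps correctly—the oscillation bound via \eqref{eq:infinityBoundBoundedVF}, the Gaussian integrability from Lemma \ref{lem:jointLift} (iii), and the continuity-in-$x$ argument that the paper omits.
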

\begin{proof}
  This follows from
  \begin{align*}
    ||\varphi||_{ L^\infty( M(x) ) }
      &\le C \exp( - \delta \inf_{r \in [t,T]} |X^{t,x}_r| ) \\
      &\le C \exp( - \delta \inf_{r \in [t,T]} |X^{t,x}_r| ) \\
      &\le C \exp( -\delta |x| ) \exp( \delta \sup_{r \in [t,T]} |X^{t,x}_r - x| ).
  \end{align*}
\end{proof}

\bigskip \textbf{Acknowledgement:}

All authors acknowledge support from DFG Priority Program 1324.
P. Friz received funding from the European Research Council under the European 
Union's Seventh Framework Programme (FP7/2007-2013) / ERC grant agreement nr. 258237.


\begin{thebibliography}{99}
\bibitem{bib:bainCrisan}
  A. Bain and D. Crisan. Fundamentals of stochastic filtering. Vol. 3. New York: Springer, 2009.

\bibitem{bib:bogachevII}
  V. Bogachev. "Measure Theory, vol. 2." Springer, 2007.

\bibitem{BF95} Z. Brzezniak and F. Flandoli. (1995). Almost sure
approximation of Wong-Zakai type for stochastic
partial differential equations. Stochastic Process. Appl. 55(2):329-358.

\bibitem{CF} M. Caruana, P. Friz: Partial differential equations driven by
rough paths, Journal of Differential Equations Volume 247, Issue 1, 1 July
2009, Pages 140-173

\bibitem{CFO} M. Caruana, P. Friz and H. Oberhauser: A (rough) pathwise
approach to a class of nonlinear SPDEs, Annales de l'Institut Henri
Poincar\'e / Analyse non lin\'eaire 28 (2011), pp. 27-46 (ISSN: 0294-1449)

\bibitem{CLL13} T.~Cass, C.~Litterer, and T.~Lyons. \newblock Integrability
estimates for {G}aussian rough differential equations. 
\newblock {\em Annals
of Probability}, 41:0091--1798, 2013.

\bibitem{bib:catellier}
  R. Catellier. Perturbations irrégulières et systèmes différentiels rugueux. PhD thesis, 2014.

\bibitem{bib:davie}
  A.M. Davie. "Differential equations driven by rough paths: an approach via discrete approximation." Applied Mathematics Research eXpress 2007.2007 (2007).

\bibitem{CDFO13} D. Crisan, J. Diehl, P. Friz and H. Oberhauser.
Robust filtering: correlated noise and multidimensional observation.
{Ann. Appl. Probab.}, 23(5):2139--2160, 2013.

\bibitem{DF12} J. Diehl and P. Friz. Backward stochastic differential
equations with rough drivers. The Annals of Probability 2012, Vol. 40, No.
4, 1715-1758, Publisher: Institute of Mathematical Statistics.

\bibitem{DOR} J. Diehl, H. Oberhauser and S. Riedel. A L\'evy area between Brownian motion 
and rough paths with applications to robust nonlinear filtering and rough partial differential equations.
Stochastic Process. Appl., 125(1):161-181, 2015. doi:10.1016/j.spa.2014.08.005.


\bibitem{bib:friedlander}
  F.G. Friedlander and M.S. Joshi. Introduction to the Theory of Distributions. Cambridge University Press, 1998.

\bibitem{bib:frizRiedel}
  P. Friz and S. Riedel.
  "Integrability of (non-) linear rough differential equations and integrals."
  Stochastic Analysis and Applications 31.2 (2013): 336-358.

\bibitem{friz-victoir-book} P. Friz and N. Victoir. 
\newblock {\em Multidimensional stochastic processes as rough paths: theory and
  applications}. \newblock Cambridge Studies in Advanced Mathematics, 120.
Cambridge University Press, Cambridge, 2010. \newblock

\bibitem{FO09}
  Friz, Peter, and Harald Oberhauser. "Rough path limits of the Wong-Zakai type with a modified drift term." Journal of Functional Analysis 256.10 (2009): 3236-3256. 

\bibitem{FO10} P. Friz and H. Oberhauser. \newblock A generalized {F}%
ernique theorem and applications. \newblock {\em Proc. Amer. Math. Soc.},
138(10):3679--3688, 2010.

\bibitem{FO14} P. Friz and H. Oberhauser. Rough path stability of
(semi-)linear SPDEs, Probab. Theory Related Fields, 158(1-2):401--434, 2014.


\bibitem{FOSplit} P. Friz and H. Oberhauser; On the splitting-up method
for rough (partial) differential equations, Journal of Differential
Equations, Volume 251, Issue 2, 15 July 2011, Pages 316--338, ISSN: 0022-0396

\bibitem{FH14} P. Friz and M. Hairer. A Course on Rough Paths: With an Introduction to Regularity
  Structures, Springer UTX
Series, 2014.

\bibitem{FR13} P. Friz and S. Riedel. \newblock Integrability of
(non-) linear rough differential equations and integrals. 
\newblock {\em
Stochastic Analysis and Applications}, 31(2):336--358, 2013.

\bibitem{F81}  H. F\"ollmer. "Calcul d'It\=o sans probabilit\'{e}s." S%
\'{e}minaire de Probabilit\'es XV 1979/80. Springer Berlin Heidelberg, 1981.
143-150.

\bibitem{bib:gubinelliTindel}
  M. Gubinelli, S. Tindel, and I. Torrecilla.
  "Controlled viscosity solutions of fully nonlinear rough PDEs." arXiv preprint arXiv:1403.2832 (2014).

\bibitem{Gy88} I. Gy\"{o}ngy. (1988). On the approximation of stochastic
partial differential equations, I. Stochastics 25:59-85.

\bibitem{Gy89} I. Gy\"{o}ngy. (1989). On the approximation of stochastic
partial differential equations, II. Stochastics 26:129-164.


\bibitem{bib:krylovAnalytic}
N.~V. Krylov.
\newblock An analytic approach to {SPDE}s.
\newblock In {\em Stochastic partial differential equations: six perspectives},
  volume~64 of {\em Math. Surveys Monogr.}, pages 185--242. Amer. Math. Soc.,
  Providence, RI, 1999.


\bibitem{bib:krylovDiffusions}
  N.V. Krylov. Introduction to the theory of diffusion processes. Vol. 142. Providence, RI: American Mathematical Society, 1995.

\bibitem{bib:krylovHolder}
  N.V. Krylov. Lectures on elliptic and parabolic equations in Hölder spaces. Vol. 12. Providence, RI: American Mathematical Society, 1996.


\bibitem{krylovRozovskii1986}
N.V. Krylov and B. L. Rozovskii. "Characteristics of degenerating
second-order parabolic Ito equations." Journal of Mathematical Sciences 32.4
(1986): 336-348.


\bibitem{Kunita1982}
H. Kunita. "Stochastic partial differential equations connected with
non-linear filtering." Nonlinear Filtering and Stochastic Control. Springer
Berlin Heidelberg, 1982. 100-169.

\bibitem{LSFully}
P.-L. Lions and P. E. Souganidis. "Fully nonlinear stochastic
partial differential equations." Comptes Rendus de l'Academie des
Sciences-Series I-Mathematics 326.9 (1998): 1085-1092.

%
%
%

\bibitem{bib:pardoux}
  E. Pardoux. "Stochastic partial differential equations and filtering of diffusion processes." Stochastics 3.1-4 (1980): 127-167.

\bibitem{bib:pardouxPeng}
  E. Pardoux and S. Peng. "Backward stochastic differential equations and
  quasilinear parabolic partial differential equations." Stochastic partial
  differential equations and their applications. Springer Berlin Heidelberg,
  1992. 200-217.

\bibitem{bib:rozovksii} B. L. Rozovskii. Stochastic evolution systems. Springer, 1990.

\end{thebibliography}
\end{document}